\newtheorem{theorem}{Theorem}[section]
\newtheorem{lemma}[theorem]{Lemma}
\newtheorem{claim}[theorem]{Claim}
\newtheorem{question}[theorem]{Question}
\newtheorem*{THM}{Main Theorem}
\newtheorem*{THM'}{Main Theorem (version 2)}
\newtheorem*{THM''}{Theorem}
\newtheorem*{Claim 53}{Claim 5.3'}
\theoremstyle{definition}
\newtheorem{definition}[theorem]{Definition}
\newtheorem{remark}[theorem]{Remark}
\newtheorem{example}[theorem]{Example}
\newcommand{\dom}{\mathrm{dom}}
\newcommand{\bb}{\mathbb}
\newcommand{\otp}{\mathrm{otp}}
\newcommand{\tp}{\mathrm{tp}}
\newcommand{\ord}{\mathrm{Ord}}
\newcommand*\oline[1]{%
  \vbox{%
    \hrule height 0.5pt
    \kern0.25ex
    \hbox{%
      \kern -0.2em
      \ifmmode#1\else\ensuremath{#1}\fi
      \kern 0em
    }
  }
}
\title{Simultaneously vanishing higher derived limits}
\author{Jeffrey Bergfalk}
\address{Universit\"{a}t Wien \\
Institut f\"{u}r Mathematik \\
Kurt G\"{o}del Research Center \\
Kolingasse 14-16 \\
1010 Wien, Austria}
\email{jeffrey.bergfalk@univie.ac.at}
\author{Chris Lambie-Hanson}
\address{Department of Mathematics and Applied Mathematics \\
Virginia Commonwealth University \\
Richmond, VA 23284 \\ United States}
\email{cblambiehanso@vcu.edu}
\begin{document}
\begin{abstract}
  In 1988, Sibe Marde\v{s}i\'{c} and Andrei Prasolov isolated an inverse system $\mathbf{A}$
  with the property that the additivity of strong
  homology on any class of spaces which includes the closed subsets of
  Euclidean space would entail that $\lim^n\mathbf{A}$ (the $n^{\text{th}}$ derived limit of $\mathbf{A}$) vanishes for every $n >0$. Since that time, the question of whether it is consistent with the $\mathsf{ZFC}$ axioms that $\lim^n \mathbf{A}=0$ for every $n >0$ has remained open. It remains possible as well that this condition in fact implies that strong homology is additive on the category of metric spaces.

   We show that, assuming the existence of a weakly compact cardinal, it is indeed consistent with the $\mathsf{ZFC}$ axioms that $\lim^n \mathbf{A}=0$ for all $n >0$. We show this via a finite support iteration of Hechler forcings which is of weakly compact length. More precisely, we show that in any forcing extension by this iteration a condition equivalent to $\lim^n\mathbf{A}=0$ will hold for each $n>0$. This condition is of interest in its own right; namely, it is the triviality of every coherent $n$-dimensional family of certain specified sorts of partial functions $\mathbb{N}^2\to\mathbb{Z}$ which are indexed in turn by $n$-tuples of functions $f:\mathbb{N}\to\mathbb{N}$. The triviality and coherence in question here generalize the classical and well-studied case of $n=1$.

\end{abstract}

\subjclass[2010]{03E05, 03E75, 03E55, 55N07, 55N40, 18E25}

\keywords{derived limit, additivity,  strong homology, Hechler forcing, iterated forcing, finite support, Delta system lemma, weakly compact cardinal}

\maketitle

\section{Introduction}\label{introduction}

A main and organizing theme in the study of infinitary combinatorics is the phenomenon of \emph{incompactness}; broadly speaking, the term denotes settings in which the local and global behaviors of a structure sharply diverge. \emph{Nontrivial coherent families of functions} on a variety of index-sets form one prominent class of  examples. These connect in turn to local-to-global questions in homology theory, wherein the nontrivial coherence relations of such set-theoretic interest figure as only the first in an infinite family of incompactness principles given by the derived functors $\lim^n$ of the inverse limit of various inverse systems. For one particular inverse system, denoted $\mathbf{A}$ below, the question of whether these associated incompactness principles can simultaneously fail had been longstanding; the main result of this paper is that under the assumption of the existence of a weakly compact cardinal, they can. This result carries implications for the strong homology of metric spaces and, as we note below, possibly for other areas of mathematics as well.

We begin by reviewing the historical background to this result; it traces to Sibe Marde\v{s}i\'{c} and Andrei Prasolov's 1988 paper ``Strong homology is not additive" \cite{mp}. Their title references the following potential continuity property of a homology theory:
\begin{definition}[\cite{milnorax}] A homology theory is \emph{additive on the class $\mathcal{C}$ of topological spaces} if for every natural number $p$ and every family $\{X_\alpha\,|\,\alpha\in A\}$ with each $X_\alpha$ and $\coprod_A X_\alpha$ in $\mathcal{C}$, the map
 $$i_p:\bigoplus_A \mathrm{H}_p(X_\alpha)\rightarrow \mathrm{H}_p(\coprod_A X_\alpha)$$
 induced by the inclusion maps $\iota_\alpha:X_\alpha\hookrightarrow\coprod_A X_\alpha$ is an isomorphism.
 \end{definition}

In \cite{mp}, Marde\v{s}i\'{c} and Prasolov isolated an inverse system $\mathbf{A}$ with the property that the additivity of strong homology on any class of topological spaces which includes the closed subsets of Euclidean space would entail that $\lim^n\mathbf{A}=0$ for all $n>0$. They succeeded also in casting the vanishing of $\lim^1\mathbf{A}$ in the following combinatorial terms: $\lim^1\mathbf{A}=0$ if and only if for every family of functions
$$\Phi=\left\langle\varphi_f:\{(k,m)\mid m\leq f(k)\}\to\mathbb{Z}\mid f\in\,\!^\omega\omega\right\rangle$$ whose elements agree pairwise modulo finite sets there exists some $\psi:\omega\times\omega\to\mathbb{Z}$ agreeing mod finite with each function in $\Phi$. More succinctly, in a parlance that has since grown standard, $\lim^1\mathbf{A}=0$ if and only if every \emph{coherent} $\Phi$ as above is \emph{trivial}. By way of this characterization, Marde\v{s}i\'{c} and Prasolov showed that the continuum hypothesis implies that $\lim^1\mathbf{A}\neq 0$. It follows that it is consistent with the $\mathsf{ZFC}$ axioms that strong homology is not additive, not even on the class of closed subspaces of $\mathbb{R}^2$.

The following year, Alan Dow, Petr Simon, and Jerry Vaughan showed that the Proper Forcing Axiom implies that $\lim^1\mathbf{A}=0$, underscoring the possibility that the additivity of strong homology may \textit{also} be consistent with the $\mathsf{ZFC}$ axioms \cite{dsv}. Soon thereafter, Stevo Todorcevic showed that the Open Coloring Axiom implies that $\lim^1\mathbf{A}=0$, while Martin's Axiom does not \cite{todpp, todcmpct}. Around 2000, Andrei Prasolov produced a nonmetrizable $\mathsf{ZFC}$ counterexample to the additivity of strong homology \cite{pra}, but the additivity question on any ``nicer'' class --- Polish or locally compact metric spaces, for example, or even metric spaces outright --- remained entirely open (and for the aforementioned classes remains so; see, however, Remark \ref{recent_work_remark} for recent progress on this question).\footnote{Prasolov's example was, in essence, a geometric ``realization'' of an $(\omega_1,\omega_1^{*})$-Hausdorff gap.} In short, in this sequence, the vanishing of $\lim^n\mathbf{A}$ became a topic of set-theoretic study in its own right \cite{kamo, far1}, one closely linked to the study of forcing axioms and one, furthermore, rekindling older lines of research into relations between homological dimension and the cardinals $\{\aleph_n \mid n \in \omega\}$ \cite{osof,blh}.
It was therefore natural that the consistency of the additivity of strong homology would close out the list of  open questions in Justin Moore's 2010 ICM survey on the Proper Forcing Axiom \cite{moorepfa}. Moore further observed therein that ``it is entirely possible that it is a theorem of $\mathsf{ZFC}$ that either [$\lim^1\mathbf{A}\neq 0$] or [$\lim^2\mathbf{A}\neq 0$].'' The first set-theoretic computation of $\lim^2\mathbf{A}$ appeared some six years later in \cite{b}; here framings of $\lim^n\mathbf{A}$ $(n>1)$ in terms of higher-dimensional coherence were given and applied to show that the Proper Forcing Axiom implies that $\lim^2\mathbf{A}\neq 0$.
Here also Goblot's work \cite{goblot} was applied to show that $\mathfrak{d} \leq
\aleph_m$ implies that $\lim^n \mathbf{A} = 0$ for all $n > m$. Still, Moore's speculation remained unanswered.

Against this background, we may state our main result:

\begin{THM} Let $\kappa\in V$ be a weakly compact cardinal and let $\mathbb{P}$ denote a length-$\kappa$ finite-support iteration of Hechler forcings. Then $$V^{\mathbb{P}}\vDash\textnormal{``}\,\mathrm{lim}^{n}\mathbf{A}=0\text{ for all }n>0.\textnormal{''}$$
\end{THM}

In addition to answering well-studied set-theoretic questions, the theorem is of interest for specifying a $\mathsf{ZFC}$ model --- namely, $V^{\mathbb{P}}$ --- in which strong homology may well turn out to be additive on some ``nice'' class of spaces properly containing the class of homotopy types of CW-complexes. Moreover, although we have followed tradition in foregrounding the additivity question above, the nonvanishing of $\lim^n\mathbf{A}$ for some $n>0$ is also the only known obstacle to strong homology \emph{having compact supports} on some such ``nice'' class; hence this property of strong homology conceivably holds in $V^{\mathbb{P}}$ as well.

\begin{remark} \label{recent_work_remark}
  We have largely retained the text of this article's original incarnation, but a
  couple of recent developments should be noted.

  First, work of the first author together with Nathaniel Bannister and Justin Tatch
  Moore \cite{bannister_bergfalk_moore} vindicated the speculations of the previous paragraph by
  showing that, in $V^{\mathbb{P}}$, strong homology is indeed additive and has compact supports on
  the class of locally compact separable metric spaces. For the property of having compact supports,
  this class of spaces is at least close to optimal, by the results of \cite{gunther} and \cite{lisica},
  but the question of whether strong homology is consistently additive on some broader class of
  spaces remains open, as we discuss at greater length in our conclusion below.

  Second, work of the authors together with Michael Hru\v{s}\'{a}k \cite{svhdlwolc}, building on the
  techniques of this paper, shows that the assumption of a weakly compact
  cardinal is not necessary to obtain the conclusion of our main theorem. More
  precisely, $\lim^n\mathbf{A} = 0$ for all $n > 0$ after adding $\beth_\omega$-many
  Cohen reals to any model of $\mathsf{ZFC}$.

  For interested readers, the most immediate point is probably the following: \cite{bannister_bergfalk_moore} and \cite{svhdlwolc} each couple multiple new ideas to techniques which first appear, and, consequently, receive their fullest and most direct treatment, in the present work. Given the complexity of these techniques, there will be benefits to reading this work before either of the others, though ultimately, of course, readers may and should order their approach to these three texts most fundamentally according to their interests.
\end{remark}

These limits also bear on questions seemingly remote from strong homology: within the context of Dustin Clausen and Peter Scholze's recently-developed \emph{condensed mathematics} \cite{condensed}, $\mathbf{A}$ is the most basic system in a family of inverse systems $\mathbf{A}_{\kappa,\lambda}$ upon whose limits' simultaneous vanishing the full and faithful embedding of the derived category of pro-abelian groups into the associated derived condensed category depends \cite{email}. More generally speaking, the above theorem reflects new levels of insight into the set-theoretic content of derived functors, and as such takes its place in a line of research beginning with Shelah's solution to Whitehead problem \cite{sheIAG, AFM}. It bears comparison, lastly, with Boban Velickovic and Alessandro Vignati's recent result in the opposite direction:
\begin{THM''}[\cite{VV}]
For all integers $n\geq 0$ it is consistent with the $\mathsf{ZFC}$ axioms that $\lim^n\mathbf{A}\neq 0$.
\end{THM''}

As in so much of the above-described research history, our argument of a fundamentally algebraic result will be predominantly set-theoretic in nature. Our Main
Theorem, in particular, can and will be recast as a purely set-theoretic statement that we
feel is of interest in its own right. For this reason we have divided
Section \ref{main}, in which we fix our notational conventions and record the definitions
and background facts most relevant to our main result, into two subsections.
One is more set-theoretic and one is more cohomological in character. Section \ref{set_background} contains the set-theoretic
reformulation of our Main Theorem and must be read in order to understand the
arguments of subsequent sections. Section \ref{hom_background} records some of the original context of the problems
addressed in this paper together with the argument that our set-theoretic reformulation
of the Main Theorem is in fact equivalent to its statement above. Readers less familiar with
homological algebra may safely skip most of Section \ref{hom_background} without sacrificing
any understanding of the remainder of the paper (we indicate at the end of
Section \ref{set_background} exactly which part of Section \ref{hom_background}
is necessary for the rest of the paper). Of course, such readers are invited to return to this subsection after reading that remainder, in order to better contextualize its contents.

The paper thereafter is structured as follows.
In Section \ref{multi} we describe a multidimensional
$\Delta$-system lemma lying at the core of our subsequent arguments. We then
turn to the proof of our Main Theorem. Due to the technical
complexity of the full proof of the theorem, we begin by
presenting two cases in which our argument's core ideas more transparently appear. In Section \ref{lim_1_section}, after recording the
requisite facts about finite-support iterations of Hechler forcing, we prove the
$n=1$ case of our main theorem. We prove the $n=2$ case in Section \ref{lim_2 section}.
The full proof of the Main Theorem is contained in Section \ref{lim_n section}.
We conclude with a brief discussion of the import of this result and with
the questions following most immediately in its wake.


We close this introduction with a more general word on notations and conventions. For any $X$ and cardinal $\lambda$ we write $[X]^\lambda$ to denote the collection of subsets of $X$ of cardinality $\lambda$ and $[X]^{<\lambda}$ for $\bigcup_{\kappa<\lambda}[X]^\kappa$. In particular we view $[X]^0$ as $\{\emptyset\}$. When $X$ is a collection of ordinals, it is frequently convenient to regard elements $u$ of $[X]^{<\omega}$ as finite increasing sequences, and vice versa. For such $u$ and $i <|u|$ we let $u(i)$ denote the unique $\alpha \in u$ such that $|u \cap \alpha| = i$. The notation $\vec{u}$ will always stand for an ordered tuple of the form $\langle u_0,\dots,u_k \rangle$, though we will occasionally begin our indexing with $1$. Also, if $u$, $v$, and $w$ are finite sets of ordinals, then the statement $w = u ^\frown v$ indicates both that $w = u \cup v$ and that $u < v$, i.e.,
that every ordinal in $u$ is less than every ordinal in $v$, and the
statement $u \sqsubseteq v$ indicates that $u$ is an initial segment of
$v$. We use angled brackets to emphasize the indexed or ordered character of a set; in all of this, though, the surest guide will simply be context.

We follow \cite{baumITF} and \cite{kunen} in our approach to forcing. We remark
though that we view conditions in a $\kappa$-length finite support iteration
as finite partial functions $p$ with domains contained in $\kappa$ rather than as
total functions $p$ such that $p(\alpha)$ is a name for the trivial condition
for all but finitely many $\alpha < \kappa$. The difference between these views, of course, is cosmetic.

\section{Main definitions and conventions}\label{main}

\subsection{Set theoretic background} \label{set_background}

Our primary objects of study are families of functions from subsets of
$\omega^2$ to $\bb{Z}$. Let us begin by introducing some basic definitions and
notational conventions.

Given functions $f,g : \omega \rightarrow \omega$, let $f \leq g$ if and only if
$f(i) \leq g(i)$ for all $i \in \omega$, let $f \leq^* g$ if and only if
$f(i) \leq g(i)$ for all but finitely many $i \in \omega$, and let $f =^* g$
if and only if $f(i) = g(i)$ for all but finitely many $i \in \omega$. We let
$f \wedge g$ denote the greatest lower $\leq$-bound of $f$ and $g$, i.e.,
$(f \wedge g)(i) = \min(\{f(i), g(i)\})$ for all $i \in \omega$. Similarly,
if $\vec{f} = \langle f_0, \ldots, f_n \rangle$ is a sequence of elements of ${^\omega}\omega$,
then $\bigwedge \vec{f}$ denotes the greatest lower $\leq$-bound of the functions
$f_0, \ldots, f_n$. If $f \in {^\omega}\omega$, then $I(f)$ denotes the set
$\left\{(i,j) \in \omega^2 ~ \middle| ~ j \leq f(i)\right\}$; visually, this is the region
below the graph of $f$. Relatedly, $U(f)$ denotes the set
$\left\{g \in {^\omega}\omega ~ \middle| ~ g \leq f\right\}$. We note that, for
ease of readability, we will sometimes write $\wedge \vec{f}$ in place of
the more formally correct $\bigwedge \vec{f}$ in expressions such as
$\varphi: I(\wedge \vec{f}) \rightarrow \bb{Z}$. The correct interpretation
will always be clear from context.

Our interest is in families of functions indexed by elements of
$({^\omega}\omega)^n$ for some positive integer $n$. Before giving general definitions, we recall the better-known special case in which
$n = 1$.

\begin{definition} \label{cohtriv}
  A family of functions $\Phi = \left\langle\varphi_f : I(f) \rightarrow \bb{Z} ~ \middle|
  ~ f \in {^\omega}\omega \right\rangle$ is \emph{coherent} if
  \[
    \varphi_g \restriction I(f \wedge g) - \varphi_f \restriction I(f \wedge g)
    =^* 0
  \]
  for all $f,g \in {^\omega}\omega$.

  The family $\Phi$ is \emph{trivial} if there exists a function $\psi : \omega^2
  \rightarrow \bb{Z}$ such that
  \[
    \psi \restriction I(f) - \varphi_f =^* 0
  \]
  for all $f \in {^\omega}\omega$. In this case, we say that $\psi$ \emph{trivializes} $\Phi$.

  The notions of \emph{coherent} and \emph{trivial} also apply to families
  of functions indexed by some \emph{subset} of ${^\omega} \omega$ in the
  obvious way.
\end{definition}

\begin{remark} \label{restriction_remark}
  In the following, as in Definition \ref{cohtriv}, sums involving functions with
  different domains will be common. In the interests of readability, we will tend to refrain from notating the restrictions of such functions to
  the intersection of their domains. An equation like
  \[
    \varphi_g \restriction I(f \wedge g) - \varphi_f \restriction I(f \wedge g)
    =^* 0,
  \]
  for example, will more typically appear as
  \[
    \varphi_g - \varphi_f =^* 0
  \]
  hereafter.
\end{remark}

Clearly any trivial family of functions is coherent. More interesting are those families of functions which are coherent but not trivial; in these families there is a tension between local and global behaviors which is exemplary of the broader set-theoretic theme of \emph{incompactness}. More precisely, observe that any coherent $\Phi = \left\langle\varphi_f ~ \middle| ~ f \in {^\omega}\omega\right\rangle$ is ``locally'' trivial: for any $g \in {^\omega}\omega$, the function $\varphi_g$ trivializes the family
$\left\langle\varphi_f ~ \middle| ~ f \in U(g)\right\rangle$.  A nontrivial coherent $\Phi$ is simply one in which these local phenomena do not globalize. Such families are the subjects of the works \cite{dsv}, \cite{todcmpct},
\cite{kamo}, and \cite{far1}, among others.
As we will see in Section \ref{hom_background}, their existence is equivalent to the statement $\lim^1 \mathbf{A} \neq 0$.

For the more general definitions of $n$-coherence and $n$-triviality, we need some more notation.

\begin{definition}
  Suppose that $n$ is a positive integer and $\vec{f} = \langle f_0, \ldots f_n\rangle$ is a
  sequence of length $n + 1$. If $i \leq n$, then $\vec{f}^i$ denotes the
  sequence of length $n$ obtained by removing the $i^{\mathrm{th}}$ entry of
  $\vec{f}$, i.e., $\vec{f}^i = \langle f_0, \ldots, f_{i-1}, f_{i + 1}, \ldots, f_n\rangle$.

  If $\sigma$ is a permutation of $\langle 0, \ldots, n \rangle$, then $sgn(\sigma)$ denotes the
  \emph{sign} or \emph{parity} of $\sigma$ (so $sgn(\sigma)$ is either $1$ or $-1$).
  We will use the notation $\sigma(\vec{f})$ to denote the sequence
  $\langle f_{\sigma(0)}, \ldots, f_{\sigma(n)}\rangle$.
\end{definition}

\begin{definition} \label{NCOH}
  Let $n$ be a positive integer, and let
  \[
    \Phi = \left\langle\varphi_{\vec{f}} : I(\wedge \vec{f}\hspace{2pt}) \rightarrow \bb{Z} ~
    \middle| ~ \vec{f} \in ({^\omega}\omega)^n\right\rangle
  \]
  be a family of functions.
  \begin{enumerate}
    \item $\Phi$ is \emph{alternating} if
    \[
      \varphi_{\sigma(\vec{f})} = sgn(\sigma) \varphi_{\vec{f}}
    \]
    for every $\vec{f} \in ({^\omega}\omega)^n$ and every permutation $\sigma$ of
    $(0, \ldots, n-1)$.
    \item $\Phi$ is \emph{$n$-coherent} if it is alternating and if
    \[
      \sum_{i = 0}^n (-1)^i \varphi_{\vec{f}^i} =^* 0
    \]
    for all $\vec{f} \in ({^\omega}\omega)^{n+1}$. (Note that, in accordance
    with Remark \ref{restriction_remark}, for readability we have omitted the restrictions
    of the functions in the above expression to the intersection of their domains.
    Formally, each $\varphi_{\vec{f}^i}$ should be $\varphi_{\vec{f}^i} \restriction
    I(\wedge \vec{f})$.)
    \item For $n > 1$, $\Phi$ is \emph{$n$-trivial} if there exists an alternating family
    \[
      \Psi = \left\langle\psi_{\vec{f}} : I(\wedge \vec{f}) \rightarrow \bb{Z} ~ \middle| ~
      \vec{f} \in ({^\omega}\omega)^{n-1}\right\rangle
    \]
    such that
    \[
      \sum_{i = 0}^{n-1} (-1)^i \psi_{\vec{f}^i} =^* \varphi_{\vec{f}}
    \]
    for all $\vec{f} \in (^{\omega}\omega)^n$. We term such a family an
    $n$-trivialization of $\Phi$.
  \end{enumerate}
  As in the case $n = 1$, the notions of \emph{alternating}, \emph{$n$-coherent},
  and \emph{$n$-trivial} apply in obvious ways to families
  indexed by $F^n$, where $F$ is any subset of ${^{\omega}} \omega$.
\end{definition}

Notice that if $n = 1$ then every family $\Phi$ as above is alternating, and
the definition of \textit{1-coherence} coincides with the that of \textit{coherence} in
Definition \ref{cohtriv}. Similarly, we define \emph{$1$-triviality} to coincide with \emph{triviality} as defined in Definition \ref{cohtriv}. If the value of $n$ is clear from context then
it may be dropped from the expression \emph{$n$-trivial}. Also we will sometimes
consider functions $\varphi_{\vec{f}}$ or $\psi_{\vec{f}}$ without explicitly
specifying their domains or codomains. It is in such cases implicit that
these functions have domain $I(\wedge\vec{f})$ and codomain $\bb{Z}$.

Observe that if $\Phi$ is alternating then $\varphi_{\vec{f}} = 0$ whenever $\vec{f} \in ({^\omega}\omega)^n$
has repeated entries, since if the permutation
$\sigma$ induces a simple swapping of two repeated entries in $\vec{f}$ then $sgn(\sigma) = -1$ and $\sigma(\vec{f}) = \vec{f}$ and therefore $\varphi_{\vec{f}} = - \varphi_{\sigma(\vec{f})}=-\varphi_{\vec{f}}$.

Just as when $n$ equals $1$, any $n$-trivial family of functions is
$n$-coherent. Also just as before, nontrivial $n$-coherent families are exemplary cases of set-theoretic incompactness.
This is again because if $\Phi = \left\langle\varphi_{\vec{f}} ~ \middle| ~  \vec{f} \in
({^\omega}\omega)^n \right\rangle$
is an $n$-coherent family then for each $g$ in ${^\omega}\omega$ the local family $\left\langle\varphi_{\vec{f}} ~ \middle| ~  \vec{f} \in U(g)^n \right\rangle$ is
$n$-trivial, as witnessed by the collection
\[
  \left\langle(-1)^{n+1} \varphi_{\vec{h} ^\frown \langle g \rangle} ~ \middle| ~ \vec{h} \in
  U(g)^{n-1} \right\rangle
\]
This fact will contrast with the global structure of any nontrivial such $\Phi$.

We will see in Section \ref{hom_background}
that for each positive integer $n$ the nonexistence of nontrivial
$n$-coherent families of functions is equivalent to the statement $\lim^n \mathbf{A} = 0$. Hence our Main Theorem may be rephrased as follows:

\begin{THM'}
  Let $\kappa$ be a weakly compact cardinal in $V$ and let $\bb{P}$ denote a length-$\kappa$
  finite-support iteration of Hechler forcings. Then the following holds in $V^\bb{P}$: for every
  positive integer $n$, every $n$-coherent family
  \[
    \Phi = \left\langle\varphi_{\vec{f}} : I(\wedge \vec{f}) \rightarrow \bb{Z} ~
    \middle| ~ \vec{f} \in ({^\omega}\omega)^n\right\rangle
  \]
  is $n$-trivial.
\end{THM'}

It is this version of the Main Theorem that we will prove.
To that end we give the following alternate characterization of the
$n$-triviality of an $n$-coherent family, one which is at least locally
more finitary in character than that of Definition \ref{NCOH}. Our
aim is to facilitate the analysis of these phenomena as they arise in
forcing extensions. In what follows, and throughout the paper, if $\psi:X
\rightarrow \bb{Z}$ is a function, then the \emph{support} of $\psi$ is the
set $\{x \in X \mid \psi(x) \neq 0\}$. We say that $\psi$ is \emph{finitely supported}
if the support of $\psi$ is finite.

\begin{lemma}\label{finsup}
  Suppose that $F \subseteq {^\omega}\omega$, and let $\Phi=\left\langle \varphi_{\vec{f}} ~ \middle| ~ \vec{f}\in F^n
  \right\rangle$ be an $n$-coherent
  family. Then $\Phi$ is $n$-trivial if and only if there exists an alternating
  family of finitely supported functions $\Psi=\left\langle \psi_{\vec{f}} ~ \middle| ~
  \vec{f}\in F^n\right\rangle$ such that
  \[
    \sum_{i=0}^{n}(-1)^i\varphi_{\vec{f}^i}=\sum_{i=0}^{n}(-1)^i\psi_{\vec{f}^i}
  \]
  for all $\vec{f}\in F^{n+1}$.
\end{lemma}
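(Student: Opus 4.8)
The plan is to handle the two implications separately; the reverse one carries all the content. For the forward direction, assume $\Phi$ is $n$-trivial, witnessed by an alternating $\Psi' = \langle \psi'_{\vec{f}} \mid \vec{f} \in F^{n-1}\rangle$ with $\sum_{i=0}^{n-1}(-1)^i \psi'_{\vec{f}^i} =^* \varphi_{\vec{f}}$ for all $\vec{f} \in F^n$ (for $n = 1$, $\Psi'$ is a single function $\psi' : \omega^2 \to \mathbb{Z}$). I would set $\psi_{\vec{f}} := \varphi_{\vec{f}} - \sum_{i=0}^{n-1}(-1)^i \psi'_{\vec{f}^i}$ for $\vec{f} \in F^n$: each $\psi_{\vec{f}}$ is then finitely supported by the meaning of $=^*$, the family $\Psi := \langle \psi_{\vec{f}} \mid \vec{f} \in F^n\rangle$ is alternating since $\Phi$ is and since the coboundary of an alternating family is alternating, and applying the coboundary operator once more --- using that it squares to zero --- gives $\sum_{i=0}^n (-1)^i \psi_{\vec{f}^i} = \sum_{i=0}^n (-1)^i \varphi_{\vec{f}^i}$ for all $\vec{f} \in F^{n+1}$. (That the coboundary operator squares to zero and preserves alternation are routine checks from the definitions.)

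For the reverse direction, suppose $\Psi = \langle \psi_{\vec{f}} \mid \vec{f} \in F^n\rangle$ is alternating with each $\psi_{\vec{f}}$ finitely supported and $\sum_{i=0}^n(-1)^i \varphi_{\vec{f}^i} = \sum_{i=0}^n(-1)^i \psi_{\vec{f}^i}$ for all $\vec{f} \in F^{n+1}$. Put $\theta_{\vec{f}} := \varphi_{\vec{f}} - \psi_{\vec{f}}$. Then $\Theta := \langle \theta_{\vec{f}} \mid \vec{f} \in F^n \rangle$ is alternating and, crucially, $\sum_{i=0}^n(-1)^i \theta_{\vec{f}^i} = 0$ \emph{exactly} --- not merely modulo a finite set --- for every $\vec{f} \in F^{n+1}$. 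The key point is that this exact vanishing of the coboundary is what makes $\Theta$ trivializable on the nose: mod-finite coherence by itself never suffices, as Hausdorff gaps and their higher-dimensional analogues show. So the goal becomes to produce an alternating $\Lambda = \langle \lambda_{\vec{f}} \mid \vec{f} \in F^{n-1}\rangle$ (a single function $\lambda : \omega^2 \to \mathbb{Z}$ for $n = 1$) with $\sum_{i=0}^{n-1}(-1)^i \lambda_{\vec{f}^i} = \theta_{\vec{f}}$ for all $\vec{f} \in F^n$; as $\theta_{\vec{f}} = \varphi_{\vec{f}} - \psi_{\vec{f}}$ with each $\psi_{\vec{f}}$ finitely supported, any such $\Lambda$ is then an $n$-trivialization of $\Phi$.

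To construct $\Lambda$ I would proceed one point of $\omega^2$ at a time. Fix $x \in \bigcup_{f \in F} I(f)$ and let $F_x := \{ f \in F \mid x \in I(f)\}$, which is nonempty; since $I(\wedge \vec{f})$ is the intersection of the sets $I(f)$ over the functions $f$ listed in $\vec{f}$, we have $x \in I(\wedge \vec{f})$ exactly when every entry of $\vec{f}$ lies in $F_x$. Evaluating $\Theta$ at $x$ thus gives an alternating family of integers, indexed by all finite tuples from $F_x$, whose coboundary vanishes identically: a simplicial cocycle on the simplex with vertex set $F_x$, which we trivialize by coning off a vertex. Explicitly, pick $g_x \in F_x$ and set $\lambda_{\vec{f}}(x) := \theta_{\langle g_x\rangle ^\frown \vec{f}}(x)$ for $\vec{f} \in F^{n-1}$ with $x \in I(\wedge \vec{f})$; a one-line computation from the cocycle identity for $\Theta$ at $\langle g_x \rangle ^\frown \vec{f}$ then yields $\sum_{i=0}^{n-1}(-1)^i \lambda_{\vec{f}^i}(x) = \theta_{\vec{f}}(x)$ (for $n = 1$: $\lambda(x) = \theta_{g_x}(x)$, with $\lambda$ set to $0$ off $\bigcup_{f \in F} I(f)$). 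Since the conditions $x \in I(\wedge \vec{f})$ depend only on $x$, different points impose no mutual constraints, so these pointwise primitives patch into functions $\lambda_{\vec{f}} : I(\wedge \vec{f}) \to \mathbb{Z}$; and the resulting $\Lambda$ is alternating because $g_x$ always sits in the initial coordinate, so permuting the remaining entries of $\vec{f}$ permutes those of $\langle g_x \rangle ^\frown \vec{f}$ with the same sign.

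The step I expect to demand the most care is not a calculation but a recognition: that what is being exploited is the \emph{exact} vanishing of the coboundary of $\Phi - \Psi$, which, unlike mere mod-finite coherence, is a pointwise condition on $\omega^2$. A related wrinkle is that $F$ need not have a $\leq$-largest element, so there is no single ``cone vertex'' with which to contract the whole cochain complex at once; passing to a fixed $x \in \omega^2$ resolves this, since there any $g_x \in F_x$ will serve.
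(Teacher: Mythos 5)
Your proposal is correct and takes essentially the same approach as the paper's proof: the forward direction uses the identical formula $\psi_{\vec{f}} = \varphi_{\vec{f}} - \sum_i (-1)^i \psi'_{\vec{f}^i}$, and in the reverse direction your $\lambda_{\vec{f}}(x) = \theta_{\langle g_x\rangle^\frown\vec{f}}(x)$ coincides (via alternation, since $(-1)^{n+1}(-1)^{n-1}=1$) with the paper's $\tau_{\vec{f}}(x) = (-1)^n\bigl(\psi_{\vec{f}^\frown\langle f_x\rangle}(x) - \varphi_{\vec{f}^\frown\langle f_x\rangle}(x)\bigr)$, built from the same pointwise choice of cone vertex $f_x\in F$ with $x\in I(f_x)$. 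Your presentation is marginally cleaner in that it names $\theta = \varphi - \psi$, verifies the primitive equation exactly at every $x\in I(\wedge\vec f)$ rather than only off the support of $\psi_{\vec f}$, and makes the simplicial coning intuition explicit, but the underlying argument is the same.
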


\begin{remark}
  See Lemma \ref{connectingmap} in Section \ref{hom_background} for a cohomological approach to the above fact; the treatment there is notably cleaner and neatly complements the more computational perspective below.
\end{remark}

\begin{proof}[Proof of Lemma \ref{finsup}]
  We first consider the case in which $n = 1$. Suppose that $\Phi$ is trivial
  and that $\tau : \omega^2 \rightarrow \bb{Z}$ is a trivialization. For
  each $f \in F$, let
  \[
    \psi_f = \varphi_f - \tau \restriction I(f).
  \]
  It is straightforward to verify that $\Psi = \left\langle \psi_f ~ \middle| ~f \in
  F \right\rangle$ is as desired.
  For the other direction, suppose that $\Psi$ is as in the statement of
  the lemma. In particular, for all $f,g \in F$ and all
  $x \in I(f \wedge g)$, we have
  \[
    \varphi_f(x) - \psi_f(x) = \varphi_g(x) - \psi_g(x).
  \]
  We can therefore define $\tau : \omega^2 \rightarrow \bb{Z}$ by setting,
  for all $x \in \omega^2$,
  \[
    \tau(x) = \varphi_f(x) - \psi_f(x)
  \]
  for some (or, equivalently, all) $f \in F$ such that $x \in I(f)$ (if there is
  no $f \in F$ such that $x \in I(f)$, simply let $\tau(x) = 0$). Since
  each $\psi_f(x)$ is finitely supported, it follows that
  $\varphi_f(x) =^* \tau \restriction I(f)$ for all $f \in F$.

  We now consider the case in which $n > 1$.
  Suppose first that $\Phi$ is $n$-trivial, and let $T = \left \langle \tau_{\vec{f}}
  ~ \middle| ~ \vec{f} \in F^{n-1}\right\rangle$ be an $n$-trivialization of
  $\Phi$. For each $\vec{f} \in F^n$, let
  \[
    \psi_{\vec{f}} = \varphi_{\vec{f}} - \sum_{i=0}^{n-1} (-1)^i \tau_{\vec{f}^i}.
  \]
  We claim that $\Psi = \left\langle\psi_{\vec{f}} ~ \middle| ~ \vec{f} \in F^n
\right\rangle$ is as desired.
  The following statements are straightforward but tedious to verify. This
  direction of the lemma being inessential to the proof of our main result, we
  leave them to the reader:
  \begin{itemize}
    \item If $T$ is an $n$-trivialization of $\Phi$ then
    each $\psi_{\vec{f}}$ is finitely supported.
    \item If both $T$ and $\Phi$ are alternating families then
    $\Psi$ also is an alternating family.
    \item For all $\vec{f} \in F^n$
    \[
      \sum_{i=0}^n (-1)^i \varphi_{\vec{f}^i} = \sum_{i=0}^n (-1)^i \psi_{\vec{f}^i}
    \]
    as desired. This can be seen by writing
    \[
      \sum_{i=0}^n (-1)^i \psi_{\vec{f}^i} = \sum_{i=0}^n (-1)^i \varphi_{\vec{f}^i}
      - \sum_{i=0}^n \sum_{j=0}^{n-1}(-1)^{i + j} \tau_{(\vec{f}^i)^j}
    \]
    and verifying that all of the terms in the double sum on the right-hand side
    of the above equation cancel out.
  \end{itemize}

  For the implication in the other direction, suppose that $\Phi$ and $\Psi$ are as in the statement of the
  lemma. For each $x \in \omega^2$, fix a function $f_x \in F$
  with $x \in I(f_x)$ if there exists such a function in $F$ (if not, then leave
  $f_x$ undefined). We will define an $n$-trivialization
  $T = \left\langle \tau_{\vec{f}} ~ \middle| ~ \vec{f} \in F^{n-1}\right\rangle$
  of $\Phi$ as follows. Given $\vec{f} \in F^{n-1}$
  and $x \in I(\wedge \vec{f})$, note that $f_x$ is defined, and let
  \[
    \tau_{\vec{f}}(x) = (-1)^n\left(\psi_{\vec{f}^\frown \langle f_x
    \rangle}(x) - \varphi_{\vec{f} ^\frown \langle f_x \rangle}(x) \right)
  \]
  The fact that $T$ is an alternating family follows immediately from the
  fact that $\Phi$ and $\Psi$ are alternating families. To see that
  $T$ is an $n$-trivialization of $\Phi$, fix $\vec{f} \in F^n$
  and let $x$ be an element of $I(\wedge \vec{f})$ outside of the support of
  $\psi_{\vec{f}}$. Then we have
  \[
    \sum_{i=0}^{n-1}(-1)^i \tau_{\vec{f}^i}(x) = (-1)^n \sum_{i=0}^{n-1}
    (-1)^i \left(\psi_{\vec{f}^i {^\frown} \langle f_x \rangle}(x) -
    \varphi_{\vec{f}^i {^\frown} \langle f_x \rangle}(x) \right)
  \]
  Letting $\vec{g} = \vec{f} ^\frown \langle f_x \rangle$, we have
  \[
    \sum_{i=0}^{n}(-1)^i\varphi_{\vec{g}^i}(x) =\sum_{i=0}^{n}(-1)^i\psi_{\vec{g}^i}(x)
  \]
  Rearranging the terms in this equation yields
  \[
  \sum_{i=0}^{n-1}
  (-1)^i \left(\psi_{\vec{f}^i {^\frown} \langle f_x \rangle}(x) -
  \varphi_{\vec{f}^i {^\frown} \langle f_x \rangle}(x) \right) =
  (-1)^n \left( \varphi_{\vec{f}}(x) - \psi_{\vec{f}}(x)\right)
  \]
  Recall that we chose $x$ so that $\psi_{\vec{f}}(x) = 0$. Hence, putting this all together,
  we obtain
  \[
    \sum_{i=0}^{n-1}(-1)^i \tau_{\vec{f}^i}(x) = (-1)^{2n} \varphi_{\vec{f}}(x)
    = \varphi_{\vec{f}}(x)
  \]
  Since the support of $\psi_{\vec{f}}$ is finite, it follows that
  \[
    \sum_{i=0}^{n-1}(-1)^i \tau_{\vec{f}^i} =^* \varphi_{\vec{f}}
  \]
  as required.
\end{proof}

With the aid of one further lemma, we are now in a position to describe the basic strategy of the argument of our main theorem. When $F$ is a subset of ${^\omega}\omega$
and $\Phi = \left\langle \varphi_{\vec{f}} ~\middle|~ \vec{f} \in ({^\omega} \omega)^n\right\rangle$,
we will write $\Phi\!\restriction\! F$ for
$\left\langle\varphi_{\vec{f}} ~\middle|~ \vec{f}\in F^n\right\rangle$. Recall that a subset $F \subseteq {^\omega} \omega$
is said to be \emph{$\leq^*$-cofinal} if, for all $g \in {^\omega}\omega$, there is
$f \in F$ such that $g \leq^* f$.

\begin{lemma}\label{coftriv} Let $\Phi=\left\langle \varphi_{\vec{f}} ~\middle|~ \vec{f}\in(^\omega\omega)^n\right\rangle$ be an $n$-coherent family of functions. Then $\Phi$ is $n$-trivial if and only if $\Phi\!\restriction\! F$ is $n$-trivial for some $\leq^*$-cofinal $F\subseteq\,\!^\omega\omega$.
\end{lemma}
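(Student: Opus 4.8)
The plan is to prove both directions, the forward one being trivial. If $\Phi$ is $n$-trivial, then restricting an $n$-trivialization $\Psi$ to any $F\subseteq{}^\omega\omega$ yields an $n$-trivialization of $\Phi\!\restriction\! F$, so in particular $\Phi\!\restriction\! F$ is $n$-trivial for $F={}^\omega\omega$, which is certainly $\leq^*$-cofinal. The substance is in the converse: assume $F\subseteq{}^\omega\omega$ is $\leq^*$-cofinal and $\Phi\!\restriction\! F$ is $n$-trivial, and produce an $n$-trivialization of the full family $\Phi$.

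The key observation is that by Lemma \ref{finsup} the $n$-triviality of $\Phi\!\restriction\! F$ is equivalent to the existence of an alternating family $\Psi^F=\langle\psi_{\vec f}\mid\vec f\in F^n\rangle$ of \emph{finitely supported} functions with $\sum_{i=0}^n(-1)^i\varphi_{\vec f^i}=\sum_{i=0}^n(-1)^i\psi_{\vec f^i}$ for all $\vec f\in F^{n+1}$, and that this ``finitely supported'' reformulation is the one that extends. The plan is therefore to extend $\Psi^F$ to an alternating family $\Psi=\langle\psi_{\vec f}\mid\vec f\in({}^\omega\omega)^n\rangle$ of finitely supported functions satisfying the same cocycle identity over all of $({}^\omega\omega)^{n+1}$, and then apply the reverse direction of Lemma \ref{finsup} to conclude that $\Phi$ is $n$-trivial. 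To define $\psi_{\vec f}$ for an arbitrary $\vec f=\langle f_0,\dots,f_{n-1}\rangle\in({}^\omega\omega)^n$, I would use $\leq^*$-cofinality of $F$ to pick a single $g\in F$ with $f_j\leq^* g$ for all $j<n$ (take an $\leq^*$-upper bound of the $f_j$ and then dominate it by an element of $F$), and exploit that the local family $\langle\varphi_{\vec h}\mid\vec h\in U(g)^n\rangle$ is $n$-trivial via the explicit witness $\langle(-1)^{n+1}\varphi_{\vec h^\frown\langle g\rangle}\mid\vec h\in U(g)^{n-1}\rangle$ recorded before Theorem version 2. More precisely, the natural definition is to set $\psi_{\vec f}$, up to a sign and restriction to $I(\wedge\vec f)$, equal to $\varphi_{\vec f}-\big(\text{the value the }\Psi^F\text{-adjusted local trivialization assigns}\big)$, i.e. something of the form $\varphi_{\vec f}-\sum_{i<n}(-1)^i\tau_{\vec f^i}$ where $\tau$ is built from $\Psi^F$ and the functions $\varphi_{\vec h^\frown\langle g\rangle}$, $\psi_{\vec h^\frown\langle g\rangle}$ for $\vec h$ ranging over initial data lying in $F$; this mirrors the construction in the proof of Lemma \ref{finsup}.

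The main obstacle is \textbf{well-definedness}: the value $\psi_{\vec f}$ must not depend on the auxiliary choice of $g\in F$ dominating $\vec f$. Here is where $n$-coherence of $\Phi$ and the cocycle identity satisfied by $\Psi^F$ do the work: given two choices $g,g'\in F$, I would find $g''\in F$ with $g,g'\leq^* g''$ (again by $\leq^*$-cofinality, after taking an upper bound), and show that the definitions coming from $g$ and from $g'$ each agree mod finite with the one coming from $g''$. The comparison of the $g$- and $g''$-definitions reduces, after expanding, to an instance of the cocycle identity $\sum_{i=0}^n(-1)^i\varphi_{\vec g^i}=\sum_{i=0}^n(-1)^i\psi_{\vec g^i}$ applied to the tuple $\vec f^\frown\langle g\rangle$ or $\vec f^\frown\langle g''\rangle$ (together with the fact that all relevant $\psi$'s are finitely supported, which is what lets ``$=$'' and ``$=^*$'' be interchanged freely), plus the alternating property to handle sign bookkeeping. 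A mild technical point is that strictly speaking one should work with a fixed $\leq^*$-increasing cofinal choice or simply verify agreement mod finite pairwise, since exact equality need not hold; phrasing everything in terms of finitely supported $\psi$'s and ``$=^*$'' absorbs this. Once well-definedness and the alternating property are checked, verifying the global cocycle identity for $\Psi$ over $({}^\omega\omega)^{n+1}$ is a routine computation of the same flavor, again reducing to instances over $F$ and using that finitely many error terms may be ignored. I expect the $n=1$ case to again be notationally lighter and worth isolating, exactly as in the proof of Lemma \ref{finsup}.
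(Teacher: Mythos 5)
Your high-level plan---use $\leq^*$-cofinality of $F$ to extend the trivialization from $F$ to all of ${^\omega}\omega$, invoking $n$-coherence to reconcile the different choices of dominating function---is the correct starting point, and your forward direction is fine. But two implementation choices push you into difficulties that the paper avoids, and these amount to a genuine gap rather than a variant argument.

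First, the detour through Lemma~\ref{finsup} is unnecessary and actually makes things harder: the paper extends the trivialization $\Upsilon = \langle \upsilon_{\vec{f}} \mid \vec{f} \in F^{n-1}\rangle$ directly, without first converting to a finitely-supported family $\Psi^F$ and back. Worse, the finitely-supported form of the cocycle identity in Lemma~\ref{finsup} is an \emph{exact} equality of functions, and your sketch relies on extending it ``mod finite'' and then claiming that finite support absorbs the slack. That does not work as stated: the discrepancy is indexed by $(n+1)$-tuples, and there is no obvious way to redistribute a finitely-supported $(n+1)$-indexed error into corrections to the $n$-indexed $\psi$'s.

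Second, and more fundamentally, you choose a \emph{single} dominating $g\in F$ per tuple $\vec{f}$. This is exactly what creates the well-definedness problem that you then spend most of the proposal trying to patch, and it leaves the actual definition of $\psi_{\vec{f}}$ unspecified (``something of the form\ldots''). The paper's key move, which your proposal is missing, is to fix a single map $a:{^\omega}\omega\to F$ with $g\leq^* a(g)$ and apply it \emph{coordinatewise}, so that for $\vec{g}\in({^\omega}\omega)^{n-1}$ one sets
\[
\psi_{\vec{g}}=\upsilon_{a(\vec{g})}-\sum_{i}(-1)^i\varphi_{\langle g_0,\dots,g_i,\,a(g_i),\dots,a(g_{n-1})\rangle},
\]
the sum being a telescoping interpolation from $\vec{g}$ to $a(\vec{g})$ through $\Phi$. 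Because every coordinate $g_j$ is always replaced by the \emph{same} $a(g_j)$ no matter which tuple it appears in, there is no choice to reconcile and hence no well-definedness issue; when one expands $\sum_i(-1)^i\psi_{\vec{g}^i}$ the $\upsilon$-terms collapse to $\varphi_{a(\vec{g})}$ (mod finite) by the trivialization on $F$, and the remaining terms cancel step by step via $n$-coherence until only $\varphi_{\vec{g}}$ survives, exactly as the paper's $n=2$ example makes explicit. Without this coordinatewise structure, the cocycle check for $\vec{f}\in({^\omega}\omega)^{n+1}$ involves $n+1$ independently chosen dominators, one per face $\vec{f}^i$, and reconciling them is not a ``routine computation'' but the whole difficulty.
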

\begin{proof} The rightwards implication is obvious. For the leftwards implication, the point is that any $n$-trivialization of any such $\Phi\!\restriction\! F$ extends to an $n$-trivialization of $\Phi$. Suppose first that $n = 1$, $F$ is $\leq^*$-cofinal in ${^\omega}\omega$, and
$\psi:\omega^2 \rightarrow \bb{Z}$ trivializes $\Phi\!\restriction\! F$. We claim that $\psi$
trivializes the entire family $\Phi$. Indeed, if $g \in {^\omega}\omega$, then
we can find $f \in F$ such that $g \leq^* f$, i.e., $I(f)$ contains $I(g)$ modulo a
finite set. By the coherence of $\Phi$, we have $\varphi_f =^* \varphi_g$. Since
$\psi$ trivializes $\Phi\!\restriction\! F$, we have $\psi =^* \varphi_f$, and hence
$\psi =^* \varphi_g$, as well.

We can therefore suppose that $n>1$. Let $F$ be $\leq^*$-cofinal in $^\omega\omega$ and let $\Upsilon=\left\langle\upsilon_{\vec{f}} ~\middle|~ \vec{f}\in F^{n-1}\right\rangle$ be an $n$-trivialization of $\Phi\restriction F$. Fix a map $a:\,\!^\omega\omega\to F$ such that $g\leq^* a(g)$ for each $g\in\,\!^\omega\omega$. Write $a(\vec{g})$ for $\left\langle a(g_0),\dots,a(g_{n-1})\right\rangle$. The family $\Psi=\left\langle\psi_{\vec{g}} ~ \middle| ~ \vec{g}\in (^\omega\omega)^{n-1}\right\rangle$ is then an $n$-trivialization of $\Phi$, where
$$\psi_{\vec{g}}=\upsilon_{a(\vec{g})}-\sum_{i=0}^{n-1}(-1)^i\varphi_{\langle g_0,\dots,g_i,a(g_i),\dots,a(g_{n-1})\rangle}$$ for each $\vec{g}\in(^\omega\omega)^{n-1}$. The right-hand side of the equation is a sum of functions defined on all but finitely many elements of $I(\wedge\vec{g})$; the function $\psi_{\vec{g}}$ is defined more precisely as the extension of that sum to the domain $I(\wedge\vec{g})$ by letting $\psi_{\vec{g}}(x)=0$ on any otherwise undefined arguments $x$. The verification that this definition works takes the following shape: for any $\vec{g}=\langle g_0,\dots,g_n\rangle \in (^\omega\omega)^{n+1}$ the terms $(-1)^i\upsilon_{a(\vec{g}^i)}$ $(i\leq n)$ in
\begin{equation}\label{checkingextension}
\sum_{i=0}^n(-1)^i\psi_{\vec{g}^i}
\end{equation}
together simplify (mod finite) to $\varphi_{a(\vec{g})}$. This together with $(n-1)$ of the terms in the sum (\ref{checkingextension}) will simplify (mod finite) to $\varphi_{\langle g_0,a(g_1),\dots,a(g_{n-1})\rangle}$. This process continues until (\ref{checkingextension}) has entirely simplified (mod finite) to the term $\varphi_{\vec{g}}$, thus completing the verification.
\end{proof}
There are in fact a number of ways to argue Lemma \ref{coftriv}. The above approach may be clearer in the following example.
\begin{example} Let $\Phi$ be $2$-coherent. Let $F$ be as in the statement of Lemma \ref{coftriv} and let $\Upsilon=\left\langle\upsilon_f ~\middle|~ f\in F\right\rangle$ $2$-trivialize $\Phi\restriction F$. Let then $a:\,\!^\omega\omega\to F$ be as in the proof of Lemma \ref{coftriv} and let $\psi_f=\upsilon_{a(f)}-\varphi_{\langle f,a(f)\rangle}$ for each $f\in\,\!^\omega\omega$. Then for all $f,g\in\,\!^\omega\omega$,
\begin{align*}
\psi_g-\psi_f & = \,\upsilon_{a(g)}-\upsilon_{a(f)}-(\varphi_{\langle g,a(g) \rangle}-\varphi_{\langle f,a(f)\rangle})\\
&=^* \varphi_{\langle a(f),a(g)\rangle}-\varphi_{\langle g,a(g)\rangle}+\varphi_{\langle f,a(f)\rangle}\\
&=^*\varphi_{\langle f,a(g)\rangle}-\varphi_{\langle g,a(g) \rangle}\\
&=^*\varphi_{\langle f,g \rangle}
\end{align*}
as desired.
\end{example}
We may now describe the broad outlines of our proof of our main theorem. Fix $n>0$ and a weakly compact cardinal $\kappa$, and let $\mathbb{P}$ be a length-$\kappa$ finite-support iteration of Hechler forcings. To show that $V^{\mathbb{P}}\vDash\textnormal{``}\,\mathrm{lim}^{n}\mathbf{A}=0\textnormal{''}$ we fix an arbitrary $n$-coherent $\Phi$ in $V^{\bb{P}}$ and show that it is $n$-trivial. This we accomplish by showing that $\Phi\restriction F$ is $n$-trivial, where $F$ is a $\leq^*$-cofinal family of Hechler reals. This in turn we achieve by defining a family $\left\langle \psi_{\vec{f}} ~ \middle|~ \vec{f}\in F^n\right\rangle$ of finitely supported functions trivializing $\Phi\restriction F$ in the sense of Lemma \ref{finsup}. These $\psi_{\vec{f}}$ are formed out of the differences among particular functions $\varphi_{\vec{f}}$ in $\Phi$. To select those functions we rely heavily on uniformities among the conditions in the $\mathbb{P}$-generic filter which derive from the weak compactness of $\kappa$ by way of the multidimensional $\Delta$-system Lemma of the following section. Our proof consists essentially in applying this procedure to each $n>0$. As indicated, however, expository considerations lead us to break this proof into three stages: that of $n=1$, that of $n=2$, and that of $n>2$.

\subsection{Homological background} \label{hom_background}

We described above a historical sequence of investigations into the derived limits $\lim^n \mathbf{A}$, the point of departure for any of which is the following computation. For each positive integer $n$ let $X^n$ denote the one-point compactification of an infinite countable sum of copies of the $n$-dimensional open unit ball $B^n=\{\vec{x}\in\mathbb{R}^n\mid \lVert\vec{x}\rVert<1\}$; more colloquially, $X^n$ is an $n$-dimensional ``Hawaiian earring.'' Its strong homology groups are as follows:

\[
\oline{H}_{p}(X^n)=
\begin{cases}
      \mathbb{Z} & p= 0 \\
      \prod_{i\in\omega}\mathbb{Z} & p=n \\
      0 & \textnormal{otherwise.}
   \end{cases}
\]
This is in marked contrast to the singular homology groups of $X^n$ \cite{eda}, \cite{barrattmilnor}, and essentially certifies $\oline{H}_{*}$ as a Steenrod homology theory \cite{milnorsteenrod}. For countably infinite sums of copies of $X^n$, on the other hand,
\begin{equation}\label{sum}
\oline{H}_{p}(\coprod_{i\in\omega} X^n)=
\begin{cases}
      \lim^n\mathbf{A}\oplus\big(\!\bigoplus_{i\in\omega}\,\mathbb{Z}\big) & p= 0 \\
      \lim^{n-p}\mathbf{A} & 0<p\leq n \\
      0 & \textnormal{otherwise.}
   \end{cases}
\end{equation}
We will define the inverse system $\mathbf{A}$ and its higher derived limits momentarily; we will then leave all more direct references to strong homology behind until our conclusion. Interested readers are referred to \cite{mp} or to \cite{SSH} more generally for details of the above computations. The significance of these computations, of course, is that they tell us that if strong homology is additive on closed subsets of Euclidean space then $\lim^n\mathbf{A}=0$ for all $n>0$.

Let $\mathcal{N}$ denote the partial order $(^\omega \omega,\leq)$, and let $\mathcal{N}^{\mathrm{op}}$ denote its order-reversal. Let $\tau(\mathcal{N})$ denote the topology on $^\omega\omega$ generated by the basis $\mathsf{B}(\mathcal{N})=\{U(g)\mid g\in\,\! ^\omega\omega\}$, where we recall that $U(g)=\{f\in\,\!^\omega\omega\mid f\leq g\}$. Given a quotient $B/A$ of groups, we write $[b]$ for the coset of an element $b$ of $B$.
\begin{definition}
Let $\mathbf{A}$ denote the inverse system $(A_f,p_{fg},\mathcal{N})$, where $$A_f=\bigoplus_{i\in I(f)}\,\mathbb{Z}$$ and $p_{fg}:A_g\to A_f$ is the projection map for each $f\leq g$ in $\mathcal{N}$. Similarly, let $\mathbf{B}=(B_f,q_{fg},\mathcal{N})$ and $\mathbf{B}/\mathbf{A}=(B_f/A_f,r_{fg},\mathcal{N})$, where $$B_f=\prod_{i\in I(f)}\mathbb{Z}$$ and $q_{fg}:B_g\to B_f$ and $r_{fg}:B_g/A_g\to B_f/A_f$ are projection maps for each $f\leq g$ in $\mathcal{N}$.
\end{definition}
$\mathbf{A}$
is an object, in other words, of the functor or presheaf category $\mathsf{Ab}^{\mathcal{N}^{\mathrm{op}}}$, and it is in this setting that its higher derived limits are most naturally defined. The inverse limit of $\mathbf{A}$, for example, admits both a category-theoretic characterization via a universal property in $\mathsf{Ab}^{\mathcal{N}^{\mathrm{op}}}$ and the following more concrete characterization:
\[
  \lim\,\mathbf{A}=\left\{\langle\varphi_f \mid f\in\,\!^\omega\omega\rangle\in\prod_{f\in\,\!^\omega\omega}A_f ~ \middle| ~ p_{fg}(\varphi_g)=\varphi_f\textnormal{ for all }f\leq g\textnormal{ in }\,\!^\omega\omega\right\}
\]
More precisely, $\lim\,\mathbf{A}$ is the above family naturally viewed as a group.\footnote{Observe that in consequence $\lim^0\mathbf{A}$, which equals $\lim\mathbf{A}$ by definition, is isomorphic to $\bigoplus_{i\in\omega}\prod_{j\in\omega}\mathbb{Z}$, as additivity in the degree $n=p$ in equation (\ref{sum}) would require.} Similarly, higher derived limits $\lim^n\mathbf{A}$ admit abelian category theoretic description as the cohomology groups of the $\lim$-image of an injective resolution of $\mathbf{A}$. Standard resolutions convert this description to a more concrete general form, as above. The characterizations of $\lim^1\mathbf{A}$ and of $\lim^n\mathbf{A}$ $(n\geq 1)$ in \cite{mp} and \cite{b}, respectively, then each involved one further conversion, via the long exact sequence
\begin{align}\label{les}0\longrightarrow \,\text{lim}\,\mathbf{A}\longrightarrow \text{lim}\,\mathbf{B}\longrightarrow \text{lim}\,\mathbf{B}/\mathbf{A}
\xrightarrow{\partial^0} \text{lim}^1\,\mathbf{A}\longrightarrow \text{lim}^1\,\mathbf{B}\longrightarrow \cdots\end{align}
associated to the natural short exact sequence
\begin{align*}0\longrightarrow \,\mathbf{A}\longrightarrow \,\mathbf{B}\longrightarrow \,\mathbf{B}/\mathbf{A}\longrightarrow 0.
\end{align*}
We will require for our work below two further reformulations.

First, for each $U$ in $\tau(\mathcal{N})$ let
\[
  \lim(\mathbf{A}\restriction U)=\left\{\langle\varphi_f \mid f\in U\rangle\in\prod_{f\in U}
  A_f ~ \middle| ~ p_{fg}(\varphi_g)=\varphi_f\textnormal{ for all }f\leq g\textnormal{ in }U\right\}.
\]
Observe that \begin{align}\label{trivlim}\lim(\mathbf{A}\restriction U(g))\cong A_g\textnormal{  for any }g\in\,\!^\omega\omega.\end{align} Similar definitions and observations apply for the systems $\mathbf{B}$ and $\mathbf{B}/\mathbf{A}$. By Jensen's observation in \cite[page 4]{jensen},
$$\mathrm{lim}^n\mathbf{A}\cong\check{H}^n\big((^\omega\omega,\tau(\mathcal{N})),\mathcal{A}\big)$$
where $\mathcal{A}$ is the sheaf on $(^\omega\omega,\tau(\mathcal{N}))$ given by $U\mapsto \lim(\mathbf{A}\restriction U)$ together with the natural restriction maps. Analogous definitions apply for the systems $\mathbf{B}$ and $\mathbf{B}/\mathbf{A}$; in particular, $\mathcal{B}/\mathcal{A}$ will denote the sheaf given by $U\mapsto \lim(\mathbf{B}/\mathbf{A}\restriction U)$. Since $\mathsf{B}(\mathcal{N})$ is maximal in the refinement-ordering of open covers of $\mathcal{N}$, equation (\ref{les}) now takes the following form:
\begin{align*}\label{les2}\dots\rightarrow \check{H}^0(\mathsf{B}(\mathcal{N}),\mathcal{B})\rightarrow \check{H}^0(\mathsf{B}(\mathcal{N}),\mathcal{B}/\mathcal{A})
\xrightarrow{\partial^0} \check{H}^1(\mathsf{B}(\mathcal{N}),\mathcal{A})\rightarrow \check{H}^1(\mathsf{B}(\mathcal{N}),\mathcal{B})\rightarrow \cdots\end{align*}
As observed in \cite{b}, \begin{equation}\label{B}\mathrm{lim}^n\,\mathbf{B}=0\textnormal{ for all }n>0.\end{equation}
The same therefore holds for $\check{H}^n(\mathsf{B}(\mathcal{N}),\mathcal{B})$. In consequence,\begin{equation}\label{lim1a}\mathrm{lim}^1\mathbf{A}\cong\frac{\check{H}^0(\mathsf{B}(\mathcal{N}),\mathcal{B}/\mathcal{A})}{\mathrm{im}(\check{H}^0(\mathsf{B}(\mathcal{N}),\mathcal{B}))}.\end{equation}
We will define these cohomology groups more explicitly below. It will then be clear that the natural rendering of the above quotient is as the \emph{coherent families of functions modulo the trivial families of functions} in the sense of Definition \ref{cohtriv}.
The following is then immediate:
\begin{theorem}[Theorem 1 in \cite{mp}]\label{lim1athm} $\lim^1\mathbf{A}=0$ if and only if every coherent family of functions $\left\langle\varphi_f ~\middle|~ f \in {^\omega}\omega \right\rangle$ is trivial.
\end{theorem}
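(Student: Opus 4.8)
The plan is to unwind the two Čech cohomology groups appearing in the isomorphism (\ref{lim1a}) and to read the desired equivalence off the resulting descriptions; the genuine content has already been supplied upstream, so what remains is essentially bookkeeping. Recall that for an open cover $\mathfrak{U}$ of a space and a sheaf $\mathcal{F}$ on it, $\check{H}^0(\mathfrak{U},\mathcal{F})$ is the kernel of the first Čech coboundary, i.e., the group of families $\langle \sigma_U \mid U \in \mathfrak{U}\rangle$ of local sections $\sigma_U \in \mathcal{F}(U)$ that agree on pairwise intersections: $\sigma_U \restriction (U \cap V) = \sigma_V \restriction (U \cap V)$ for all $U, V \in \mathfrak{U}$.

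First I would compute $\check{H}^0(\mathsf{B}(\mathcal{N}), \mathcal{B}/\mathcal{A})$. The cover $\mathsf{B}(\mathcal{N})$ consists of the sets $U(g)$, and $U(g) \cap U(h) = U(g \wedge h)$; moreover, by the $\mathbf{B}/\mathbf{A}$-analogue of (\ref{trivlim}) we have $(\mathcal{B}/\mathcal{A})(U(g)) = \lim(\mathbf{B}/\mathbf{A} \restriction U(g)) \cong B_g / A_g$. Under these identifications an element of $\check{H}^0(\mathsf{B}(\mathcal{N}), \mathcal{B}/\mathcal{A})$ becomes a family $\langle [\varphi_g] \in B_g / A_g \mid g \in {}^\omega\omega\rangle$, equivalently (choosing representatives) a family of functions $\varphi_g : I(g) \to \bb{Z}$, subject to the condition that for all $g, h$ the cosets $[\varphi_g]$ and $[\varphi_h]$ restrict to the same element of $B_{g \wedge h} / A_{g \wedge h}$, i.e., that $\varphi_g - \varphi_h$ is finitely supported on $I(g \wedge h)$. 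This is precisely coherence in the sense of Definition \ref{cohtriv}. Thus $\check{H}^0(\mathsf{B}(\mathcal{N}), \mathcal{B}/\mathcal{A})$ is identified with the group of coherent families $\langle \varphi_f \mid f \in {}^\omega\omega\rangle$, two such families being identified when they differ by a finitely supported function on each $I(f)$.

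Next I would run the same computation for $\mathcal{B}$ in place of $\mathcal{B}/\mathcal{A}$, using $\mathcal{B}(U(g)) \cong B_g = \prod_{i \in I(g)} \bb{Z}$: a class in $\check{H}^0(\mathsf{B}(\mathcal{N}), \mathcal{B})$ is a family $\langle \psi_g : I(g) \to \bb{Z} \mid g \in {}^\omega\omega\rangle$ with $\psi_g = \psi_h$ honestly on $I(g \wedge h)$. Since every point of $\omega^2$ lies in $I(g)$ for a $\leq$-cofinal set of $g$, such a family amounts to a single function $\psi : \omega^2 \to \bb{Z}$ via $\psi_g = \psi \restriction I(g)$. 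The map $\check{H}^0(\mathsf{B}(\mathcal{N}), \mathcal{B}) \to \check{H}^0(\mathsf{B}(\mathcal{N}), \mathcal{B}/\mathcal{A})$ is induced by $\mathbf{B} \to \mathbf{B}/\mathbf{A}$, so under the above identifications it sends $\psi$ to the coherent family $\langle [\psi \restriction I(f)] \mid f \in {}^\omega\omega\rangle$. Hence a coherent family $\Phi = \langle \varphi_f \mid f \in {}^\omega\omega\rangle$ lies in the image of this map exactly when there is $\psi : \omega^2 \to \bb{Z}$ with $\psi \restriction I(f) =^* \varphi_f$ for all $f$, that is, exactly when $\Phi$ is trivial.

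Putting the two identifications together, the quotient $\check{H}^0(\mathsf{B}(\mathcal{N}), \mathcal{B}/\mathcal{A}) / \mathrm{im}(\check{H}^0(\mathsf{B}(\mathcal{N}), \mathcal{B}))$ is the group of coherent families modulo trivial families, and so by (\ref{lim1a}) the same holds for $\lim^1 \mathbf{A}$. In particular any coherent $\Phi$ determines a class in this quotient that vanishes precisely when $\Phi$ is trivial, whence $\lim^1 \mathbf{A} = 0$ if and only if every coherent family is trivial. I do not expect any serious obstacle: the substantive inputs --- the reformulation $\lim^n \mathbf{A} \cong \check{H}^n(\mathsf{B}(\mathcal{N}), \mathcal{A})$, the vanishing $\lim^n \mathbf{B} = 0$ of (\ref{B}), and the resulting presentation (\ref{lim1a}) --- are already in hand, so the only thing to be careful about is the passage between honest functions and their cosets, together with the verification that "compatible along the cover" translates to coherence on the nose rather than up to some coarser equivalence.
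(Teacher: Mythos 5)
Your proposal is correct and is precisely the calculation the paper has in mind: the text presents (\ref{lim1a}) and then declares that, once the \v{C}ech groups are defined (Definition \ref{Cech}), the quotient is evidently ``coherent families modulo trivial families,'' from which the theorem ``is then immediate.'' You have simply written out the ``immediate'' step --- identifying $\check{H}^0(\mathsf{B}(\mathcal{N}),\mathcal{B}/\mathcal{A})$ with coherent families (mod fiberwise-finite changes), $\check{H}^0(\mathsf{B}(\mathcal{N}),\mathcal{B})$ with genuine functions $\psi:\omega^2\to\bb{Z}$, and the image with trivial families --- so this is the same route, just with the bookkeeping made explicit.
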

By point (\ref{B}) above, the higher-$n$ analogue of equation (\ref{lim1a}) is simply
\begin{equation}\label{limna}\mathrm{lim}^n\mathbf{A}\cong\check{H}^{n-1}(\mathsf{B}(\mathcal{N}),\mathcal{B}/\mathcal{A})\textnormal{ for all }n>1\end{equation}
The advantage of this formulation is that in computing \v{C}ech cohomology groups we have a choice among several indexing schema giving equivalent quotients \cite[I.20 Proposition 2]{serre}. As a generic real may fail to integrate nontrivially into the ordering $({^\omega}\omega,\leq)$ of its ground model, it is useful in contexts like ours to define higher coherence via indices as untethered to that order as possible. To that end we adopt the \emph{alternating cochains} definition of the \v{C}ech complex.
\begin{definition}\label{Cech} For $n\geq 0$, the group $\check{H}^n(\mathsf{B}(\mathcal{N}),\mathcal{B}/\mathcal{A})$ is the cohomology of the cochain complex
\begin{align}\label{cechcomplex}\cdots\xrightarrow{d^{n-1}} C_{alt}^n(\mathsf{B}(\mathcal{N}),\mathcal{B}/\mathcal{A})\xrightarrow{d^n}C_{alt}^{n+1}(\mathsf{B}(\mathcal{N}),\mathcal{B}/\mathcal{A})\xrightarrow{d^{n+1}}\cdots\end{align}
where $C_{alt}^n(\mathsf{B}(\mathcal{N}),\mathcal{B}/\mathcal{A})$ denotes the subgroup of
\begin{align}\label{altcochains}
\prod_{\vec{f}\in(^\omega \omega)^{n+1}}\mathrm{lim}\left(\mathbf{B}/\mathbf{A}\restriction U(\wedge\vec{f})\right)=\prod_{\vec{f}\in(^\omega \omega)^{n+1}}B_{\bigwedge\vec{f}}\,/A_{\bigwedge\vec{f}}
\end{align}
whose elements $c$ satisfy $$c(\sigma(\vec{f}))=sgn(\sigma)c(\vec{f})$$
for all $\vec{f}\in(^\omega\omega)^{n+1}$ and permutations $\sigma$ of $\langle 0,\dots,n\rangle$. The equality in  (\ref{altcochains}) amounts simply to an application of the $\mathbf{B}/\mathbf{A}$-variant of observation (\ref{trivlim}).

The differentials $d^n$ are defined as usual. Namely, for any $i\leq n+1$ and $\vec{f}\in(^\omega \omega)^{n+2}$ recall that $\vec{f}^i$ denotes the element of $(^\omega\omega)^{n+1}$ formed via the omission of the coordinate $f_i$ from $\vec{f}$. Then for any $c\in C_{alt}^n(\mathsf{B}(\mathcal{N}),\mathcal{B}/\mathcal{A})$, define $d^n c$ by the assignment
\begin{align}\label{sum1}
d^n c(\vec{f})=\displaystyle\sum_{i=0}^{n+1}(-1)^i \left(c(\vec{f}^i)\restriction I(\wedge \vec{f})\right)
\end{align}
for each $\vec{f}\in (^\omega\omega)^{n+2}$.

Entirely analogous definitions hold for $\check{H}^n(\mathsf{B}(\mathcal{N}),\mathcal{A})$ and $\check{H}^n(\mathsf{B}(\mathcal{N}),\mathcal{B})$. Observe in particular that any $c$ in $C_{alt}^n(\mathsf{B}(\mathcal{N}),\mathcal{B}/\mathcal{A})$ admits representatives $\hat{c}$ in $C_{alt}^n(\mathsf{B}(\mathcal{N}),\mathcal{B})$, i.e., alternating cochains $\hat{c}$ taking values in the groups $B_{\bigwedge\vec{f}}$ and satisfying $[\hat{c}(\vec{f})]=c(\vec{f})$ for all $\vec{f}\in(^\omega\omega)^{n+1}$.
\end{definition}
As in the cases of Section \ref{set_background}, the requisite function-restrictions will tend to be clear from context. Hence again for readability we will rarely notate them, writing,
for example, sums like that of equation (\ref{sum1}) as $$\displaystyle\sum_{i=0}^{n+1}(-1)^i c(\vec{f}^i).$$

It is now clear that $n$-coherent and $n$-trivial families are, respectively,
representatives of the $(n-1)$-cocycles and $(n-1)$-coboundaries of the complex (\ref{cechcomplex}) above.
Recall that every $n$-trivial $\Phi$ is $n$-coherent. We have just argued that
whether or not $\lim^n\mathbf{A}$ vanishes is precisely the question of the converse;
we have in other words shown the following:
\begin{theorem}\label{limnA} Let $n$ be a positive integer. Then $\text{lim}^n\,\mathbf{A}=0$ if and only if every $n$-coherent family
$\Phi=\left\langle\varphi_{\vec{f}} ~\middle|~ \vec{f}\in(^\omega\omega)^n\right\rangle$ is $n$-trivial.
\end{theorem}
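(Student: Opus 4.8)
The plan is to reduce Theorem~\ref{limnA} to the identification, already assembled in Section~\ref{hom_background}, of $\lim^n\mathbf{A}$ with a cohomology group whose cocycles and coboundaries are precisely the $n$-coherent and $n$-trivial families. Concretely, for $n=1$ the statement is exactly Theorem~\ref{lim1athm}, which has already been recorded, so it suffices to treat $n>1$. For $n>1$ the starting point is equation~(\ref{limna}), namely $\lim^n\mathbf{A}\cong\check{H}^{n-1}(\mathsf{B}(\mathcal{N}),\mathcal{B}/\mathcal{A})$, together with the explicit description of this \v{C}ech cohomology group as the cohomology of the complex~(\ref{cechcomplex}) of alternating cochains given in Definition~\ref{Cech}.

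First I would unwind the definitions of $(n-1)$-cocycle and $(n-1)$-coboundary in the complex~(\ref{cechcomplex}) and match them term by term with Definition~\ref{NCOH}. An element $c\in C_{alt}^{n-1}(\mathsf{B}(\mathcal{N}),\mathcal{B}/\mathcal{A})$ is, by~(\ref{altcochains}) and observation~(\ref{trivlim}), an alternating family $\langle c(\vec f)\mid \vec f\in(^\omega\omega)^n\rangle$ with $c(\vec f)\in B_{\wedge\vec f}/A_{\wedge\vec f}$; choosing representatives $\hat c(\vec f)\in B_{\wedge\vec f}$ (as Definition~\ref{Cech} records is always possible) and recalling that $B_{\wedge\vec f}/A_{\wedge\vec f}$ is the group of functions $I(\wedge\vec f)\to\bb{Z}$ modulo the finitely supported ones, the condition $d^{n-1}c=0$ becomes $\sum_{i=0}^{n}(-1)^i\hat c(\vec f^{\,i})$ is finitely supported on $I(\wedge\vec f)$ for every $\vec f\in(^\omega\omega)^{n+1}$, i.e.\ $\sum_{i=0}^{n}(-1)^i\varphi_{\vec f^{\,i}}=^*0$ — this is exactly $n$-coherence. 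Dually, $c$ is an $(n-1)$-coboundary iff there is an alternating $(n-2)$-cochain $b$ with $d^{n-2}b = c$, which on representatives says $\sum_{i=0}^{n-1}(-1)^i b(\vec f^{\,i}) =^* \hat c(\vec f)$ for all $\vec f\in(^\omega\omega)^n$; reading $b$ as an alternating family $\Psi=\langle\psi_{\vec f}\mid\vec f\in(^\omega\omega)^{n-1}\rangle$, this is precisely $n$-triviality as in Definition~\ref{NCOH}(3). Hence the map sending an $n$-coherent family to its class in $\check{H}^{n-1}(\mathsf{B}(\mathcal{N}),\mathcal{B}/\mathcal{A})$ is a surjection whose kernel is exactly the $n$-trivial families, so the cohomology group vanishes iff every $n$-coherent family is $n$-trivial; combined with~(\ref{limna}) this gives the theorem for $n>1$.

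The few points requiring a little care — and the place where I expect the only real friction — are the bookkeeping around passing between $c$ and its representatives $\hat c$ and around the omitted restriction maps. One must check that $n$-coherence of $\Phi=\langle\varphi_{\vec f}\rangle$ is independent of the choice of $\bb{Z}$-valued representatives (it is, since two representatives of the same element of $B_{\wedge\vec f}/A_{\wedge\vec f}$ differ by a finitely supported function and $=^*0$ is insensitive to such perturbations), and that the differential $d^{n-1}$ of~(\ref{sum1}), which formally involves restricting $c(\vec f^{\,i})$ from $I(\wedge\vec f^{\,i})$ down to $I(\wedge\vec f)\subseteq I(\wedge\vec f^{\,i})$, matches the restrictions implicit in Definition~\ref{NCOH} (as flagged in Remark~\ref{restriction_remark} and the surrounding discussion); these domain inclusions hold because $\wedge\vec f\le\wedge\vec f^{\,i}$. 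Everything else is the routine observation that the coefficient sheaf value $\lim(\mathbf{B}/\mathbf{A}\restriction U(g))\cong B_g/A_g$ is literally the quotient of $\{$functions $I(g)\to\bb{Z}\}$ by the finitely supported ones. I would therefore present the proof as a short dictionary between the two sides, invoking~(\ref{limna}), Definition~\ref{Cech}, and Theorem~\ref{lim1athm} for $n=1$, and leaving the (genuinely mechanical) verification of the representative-independence and restriction-compatibility to the reader or to a line or two of remark.
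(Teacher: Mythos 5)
Your proposal is correct and follows essentially the same route as the paper: identify $\lim^n\mathbf{A}$ with $\check{H}^{n-1}(\mathsf{B}(\mathcal{N}),\mathcal{B}/\mathcal{A})$ (via (\ref{lim1a}) for $n=1$, i.e.\ Theorem~\ref{lim1athm}, and via (\ref{limna}) for $n>1$), and then read $n$-coherent families as representatives of $(n-1)$-cocycles and $n$-trivial families as representatives of $(n-1)$-coboundaries in the alternating \v{C}ech complex of Definition~\ref{Cech}. The paper leaves this dictionary as an assertion ("It is now clear that\dots"); you have merely spelled out the term-by-term matching and the representative-independence checks that the paper elides.
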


We end this subsection by indicating how Lemma \ref{finsup} can be derived
from cohomological considerations in the special case $F = {^\omega}\omega$.
(The more general case of an arbitrary $F \subseteq {^\omega}\omega$ can be obtained
via a routine modification of the arguments in this subsection, working with the
partial order $\mathcal{N}_F = (F, \leq)$ and inverse systems
$\mathbf{A} \restriction F = (A_f, p_{fg}, \mathcal{N}_F)$, $\mathbf{B} \restriction F$,
and ${\mathbf{B} / \mathbf{A}} \restriction F$ instead of $\mathcal{N}$, $\mathbf{A}$,
$\mathbf{B}$, and $\mathbf{B}/\mathbf{A}$. All of the results of this subsection carry through with
no difficulty in this setting.)
We return to equation (\ref{limna}), which had derived from the cohomological reformulation of equation (\ref{les}). Equation (\ref{limna}) is more precisely stated as follows:
\begin{lemma}\label{connectingmap} Let $n$ be greater than $1$. Then $\check{H}^{n-1}(\mathsf{B}(\mathcal{N}),\mathcal{B}/\mathcal{A})$ is isomorphic to $\check{H}^n(\mathsf{B}(\mathcal{N}),\mathcal{A})$ via the connecting map $\partial_{n-1}: [c]\mapsto [d^{n-1} \hat{c}]$, where $\hat{c}$ is any representative of $c$ in $C_{alt}^{n-1}(\mathsf{B}(\mathcal{N}),\mathcal{B})$.
\end{lemma}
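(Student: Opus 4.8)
The plan is to extract this from the long exact sequence in \v{C}ech cohomology associated to the short exact sequence of sheaves $0 \to \mathcal{A} \to \mathcal{B} \to \mathcal{B}/\mathcal{A} \to 0$, combined with the vanishing fact~(\ref{B}) (equivalently, $\check H^n(\mathsf{B}(\mathcal N),\mathcal B)=0$ for all $n>0$). Concretely, the portion of the long exact sequence around degree $n-1$ reads
\[
\check H^{n-1}(\mathsf{B}(\mathcal N),\mathcal B) \to \check H^{n-1}(\mathsf{B}(\mathcal N),\mathcal B/\mathcal A) \xrightarrow{\ \partial_{n-1}\ } \check H^{n}(\mathsf{B}(\mathcal N),\mathcal A) \to \check H^{n}(\mathsf{B}(\mathcal N),\mathcal B).
\]
Since $n-1 \geq 1$ and $n \geq 1$, both outer terms vanish by~(\ref{B}), so exactness immediately gives that $\partial_{n-1}$ is an isomorphism. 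The only remaining task is to verify that the abstract connecting homomorphism, unwound through the alternating-cochain model of Definition~\ref{Cech}, is exactly the map $[c] \mapsto [d^{n-1}\hat c]$ described in the statement.

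For that verification I would recall the standard snake-lemma construction of the connecting map at the cochain level. Given a class in $\check H^{n-1}(\mathsf{B}(\mathcal N),\mathcal B/\mathcal A)$, represented by an alternating cocycle $c \in C^{n-1}_{alt}(\mathsf{B}(\mathcal N),\mathcal B/\mathcal A)$, one first lifts $c$ to a (not necessarily cocyclic) cochain $\hat c \in C^{n-1}_{alt}(\mathsf{B}(\mathcal N),\mathcal B)$; such a lift exists, and can be chosen alternating, by the final observation of Definition~\ref{Cech} (each coset $c(\vec f)$ has a representative in $B_{\wedge\vec f}$, and one averages over permutations if necessary to make the choice alternating). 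One then applies the differential $d^{n-1}$ to $\hat c$; because $c$ was a cocycle, $d^{n-1}\hat c$ has all its coordinates lying in the subgroups $A_{\wedge\vec f} \subseteq B_{\wedge\vec f}$, i.e.\ $d^{n-1}\hat c$ is a genuine element of $C^n_{alt}(\mathsf{B}(\mathcal N),\mathcal A)$, and the relation $d^n d^{n-1} = 0$ shows it is a cocycle there. Its class $[d^{n-1}\hat c] \in \check H^n(\mathsf{B}(\mathcal N),\mathcal A)$ is by definition $\partial_{n-1}([c])$, and a routine check shows this is independent of the choices of representative $c$ and lift $\hat c$. This is precisely the formula in the statement.

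The only genuinely substantive points — and hence where I would expect the mild friction to lie — are bookkeeping ones: first, that the three sheaves and their cochain complexes really do form a short exact sequence of complexes in the alternating-cochain model (this is where the surjectivity of $B_{\wedge\vec f} \to B_{\wedge\vec f}/A_{\wedge\vec f}$ and the identification~(\ref{altcochains}) are used, together with the fact that passing to alternating subgroups preserves exactness since $\mathbb Z$ is torsion-free, or equivalently since one can symmetrize); and second, that one may choose the lift $\hat c$ to be alternating, so that $d^{n-1}\hat c$ genuinely lands in the \emph{alternating} cochain group as required by Definition~\ref{Cech}. Neither presents a real obstacle; both are standard homological algebra, and I would dispatch them briefly, citing~(\ref{B}) for the vanishing and the closing remark of Definition~\ref{Cech} for the existence of representatives.
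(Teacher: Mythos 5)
Your proposal matches the paper's own argument: the paper treats the isomorphism as already established (equation~(\ref{limna}), which followed from the long exact sequence and the vanishing~(\ref{B})) and then observes that the stated formula for $\partial_{n-1}$ is just the standard snake-lemma description of the connecting map, citing Weibel rather than unwinding it. Your expansion of that citation is accurate. One small technical wobble worth flagging: you suggest producing an alternating lift by ``averaging over permutations,'' but over $\mathbb{Z}$-coefficients you cannot divide by $n!$, so this symmetrization device is unavailable (and torsion-freeness does not rescue it, since $\prod_{i}\mathbb{Z}$ is not divisible). The correct and standard way to get an alternating lift, and hence short exactness of the alternating cochain complexes, is to fix a well-order on $^\omega\omega$, choose a set-theoretic lift of $c(\vec{f})$ only for increasing tuples $\vec{f}$, and then extend to all tuples by antisymmetry; this is what the closing remark of Definition~\ref{Cech} is implicitly invoking. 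This is a repair to a side remark, not to the structure of your argument, which is sound and essentially identical to the paper's.
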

\begin{proof}
Here the only new assertion is the characterization of the connecting map $\partial_{n-1}$. This, though, is simply the general form of the connecting maps of a long exact sequence; see for example \cite[Addendum 1.3.3]{weibel}.
\end{proof}
For $n\geq 1$ it follows that an $n$-coherent family $\Phi=\left\langle\varphi_{\vec{f}}
~\middle|~ \vec{f}\in(^\omega\omega)^n\right\rangle$ represents a coboundary in $C_{alt}^n(\mathsf{B}(\mathcal{N}),\mathcal{B}/\mathcal{A})$ if and only if what we might write as $d^n\Phi$ \emph{is} a coboundary in $C_{alt}^{n+1}(\mathsf{B}(\mathcal{N}),\mathcal{A})$.
Lemma \ref{finsup} is then immediate.

\section{A multidimensional $\Delta$-system lemma}\label{multi}

We turn now more towards the argument of our main theorem. We begin by proving a generalization of the two-dimensional $\Delta$-system lemma appearing in \cite{bhs} (see \cite{TodReals} for an earlier precedent). Structuring our generalization will be the following refinement of the relation holding between pairs of elements of a classical $\Delta$-system.
\begin{definition}
  Sets $u,v \in [\ord]^{<\omega}$ are \emph{aligned} if
  \begin{itemize}
    \item $|u| = |v|$, and
    \item $|u \cap \alpha| =
    |v \cap \alpha|$ for all $\alpha \in u \cap v$.
  \end{itemize}
\end{definition}

\begin{definition}
Let $0 < k < \ell$ be natural numbers. A function $t:\ell \rightarrow 3$
  is a \emph{type of width $k$ and length $\ell$} if
  \[
    |t^{-1}[\{0,2\}]| = |t^{-1}[\{1,2\}]| = k.
  \]
  Given $u,v \in [\ord]^k$ and an increasing enumeration $\langle\alpha_i \mid i < \ell\rangle$ of $u \cup v$, we say that the \emph{type of $u$ and $v$} is the unique type $t$ of width $k$ and length $\ell$ such that
  \begin{itemize}
    \item $u \setminus v = \{\alpha_i \mid t(i)=0\}$,
    \item $v \setminus u = \{\alpha_i \mid t(i)=1\}$, and
    \item $u \cap v = \{\alpha_i \mid t(i)=2\}$.
  \end{itemize}
 We denote this unique type $t$ by
  $\tp(u,v)$. Observe that $u$ and $v$ are aligned if and only if $t=\tp(u,v)$ satisfies
  \[
  |t^{-1}[\{0\}] \cap i| = |t^{-1}[\{1\}] \cap i|
\]
for all $i $ in $t^{-1}[\{2\}]$. Any such type $t$ is also called \emph{aligned}.
\end{definition}

Types as above admit natural representations as finite strings of $0$s, $1$s, and $2$s. If $\alpha<\beta<\gamma$, for example, then
  $\tp(\{\alpha,\beta\},\{\beta,\gamma\})$ is naturally rendered as $021$.

We may now state our multidimensional $\Delta$-system lemma. We first remind the reader
of the definition of a \emph{weakly compact cardinal}.

\begin{definition}
  A cardinal $\kappa$ is \emph{weakly compact} if $\kappa$ is uncountable and,
  for every positive integer $n$, every $\theta < \kappa$, and every function
  $c:[\kappa]^n \rightarrow \theta$, there is a set $H \in [\kappa]^\kappa$
  such that $c \restriction [H]^n$ is constant.
\end{definition}

There are many useful alternative characterizations of weakly compact cardinals;
for example, $\kappa$ is weakly compact if and only if $\kappa$ is strongly
inaccessible and has the tree property.
We refer the reader to \cite{kanamori} for more information on this and other
equivalent formulations of weak compactness.

Recall our notational convention that elements of $[\kappa]^n$ are often thought
of as increasing $n$-tuples. So, for instance, when we write $\vec{\alpha} =
\langle \alpha_0, \ldots, \alpha_{n-1} \rangle \in [\kappa]^n$, it is implicit
that $\alpha_0 < \alpha_1 < \ldots < \alpha_{n-1}$.

\begin{lemma} \label{strong_delta_system_lemma}
  Let $n$ be a positive number and let $\kappa$ be a weakly compact cardinal, and suppose that the family of $n$-tuples $\vec{\alpha} = \langle \alpha_0, \ldots, \alpha_{n-1} \rangle\in [\kappa]^n$ indexes elements $u_{\vec{\alpha}}$ of $ [\kappa]^{<\omega}$.
  Then there is a set $A \in [\kappa]^\kappa$ and a collection
  $\left\langle u_{\vec{\alpha}} ~ \middle| ~ \vec{\alpha}\in [A]^{<n}\right\rangle$
  of elements of $[\kappa]^{<\omega}$ such that
  \begin{enumerate}
    \item for all $\vec{\alpha} \in [A]^{<n}$,
    \begin{enumerate}
      \item if $\beta \in A$ and $\vec{\alpha} < \beta$, then
      $u_{\vec{\alpha}} < \beta$,
      \item if $\vec{\beta} \in [A]^{\leq n}$ satisfies
      $\vec{\alpha} \sqsubseteq \vec{\beta}$, then $u_{\vec{\alpha}} \sqsubseteq
      u_{\vec{\beta}}$,
      \item the set $\left\{u_{\vec{\alpha}
      ^\frown \langle \beta \rangle} ~ \middle| ~ \beta \in A\backslash(\max(\vec{\alpha})+1)
      \right\}$ forms a $\Delta$-system with root $u_{\vec{\alpha}}$;
    \end{enumerate}
    \item for all $m \leq n$ and all $\vec{\alpha}, \vec{\beta} \in [A]^m$,
    \begin{enumerate}
    \item $|u_{\vec{\alpha}}| = |u_{\vec{\beta}}|$, and
    \item if $\vec{\alpha}$ and $\vec{\beta}$ are aligned, then $u_{\vec{\alpha}}$
    and $u_{\vec{\beta}}$ are aligned.
    \end{enumerate}
  \end{enumerate}
\end{lemma}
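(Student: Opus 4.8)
The plan is to build $A$ and the refined assignment $\vec\alpha\mapsto u_{\vec\alpha}$ by a downward recursion on the tuple-length, at each stage shrinking to a weakly compact-sized homogeneous set for a finitely-valued coloring that encodes the relevant combinatorial data about the $u_{\vec\alpha}$'s. The base objects are the given $u_{\vec\alpha}$ for $\vec\alpha\in[\kappa]^n$; the new objects $u_{\vec\alpha}$ for shorter $\vec\alpha$ will be produced as $\Delta$-system roots along the way, so item (1)(c) will hold essentially by construction.

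\medskip

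\emph{Step 1 (coding via types).} First I would fix, for each $m\le n$, a finite set $T_m$ of candidate "patterns" that a pair $(\vec\alpha,u_{\vec\alpha})$ can exhibit: namely the isomorphism type of the finite set $\vec\alpha\cup u_{\vec\alpha}$ as a linear order together with the marking of which points lie in $\vec\alpha$, which in $u_{\vec\alpha}$, and which in both, \emph{plus} the analogous two-tuple data $\tp(u_{\vec\alpha},u_{\vec\beta})$ relative to a fixed $\tp(\vec\alpha,\vec\beta)$. Since $|u_{\vec\alpha}|$ is a priori unbounded, the genuinely infinitary obstacle is that these patterns do not form a finite set without first bounding the cardinalities $|u_{\vec\alpha}|$; I would handle this exactly as in the classical $\Delta$-system lemma, by first applying weak compactness (in the form: $\kappa$ is inaccessible, so $c:[\kappa]^n\to\kappa$, $\vec\alpha\mapsto|u_{\vec\alpha}|$, is bounded on a set of size $\kappa$ — here one uses regularity, or a pressing-down/ partition argument) to reduce to a set $A_0\in[\kappa]^\kappa$ on which $|u_{\vec\alpha}|$ is a constant $k_n$. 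After that reduction the relevant patterns do live in a fixed finite set, and weak compactness applies literally.

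\medskip

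\emph{Step 2 (the recursion).} Working down from $m=n$ to $m=1$: given $A_m\in[\kappa]^\kappa$ on which the assignment $\vec\alpha\mapsto u_{\vec\alpha}$ (for $\vec\alpha\in[A_m]^m$) is "pattern-homogeneous" in the sense above, I would, for each fixed $\vec\alpha\in[A_m]^{m-1}$, consider the $\kappa$-sized family $\langle u_{\vec\alpha^\frown\langle\beta\rangle}\mid\beta\in A_m\setminus(\max\vec\alpha+1)\rangle$ and apply the ordinary $\Delta$-system lemma to thin $A_m$ and obtain a root; but since we must do this simultaneously for $\kappa$-many $\vec\alpha$, the right move is to phrase it as a single coloring $c_m:[A_m]^{m+1}\to(\text{finite})$ recording, for $\langle\vec\alpha,\beta,\gamma\rangle$, whether $u_{\vec\alpha^\frown\langle\beta\rangle}\cap u_{\vec\alpha^\frown\langle\gamma\rangle}$ is an initial segment of each and has the correct size — weak compactness gives $A_{m}'\in[A_m]^\kappa$ homogeneous, and on it the pairwise intersections stabilize to a common set which I define to be $u_{\vec\alpha}$. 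One then checks $u_{\vec\alpha}<\beta$ for $\beta\in A_m'$ above $\vec\alpha$ (item (1)(a)), using that the intersection of two sets both lying below their respective top indices lies below the smaller, and checks the $\sqsubseteq$-coherence (1)(b) and the alignment transfer (2)(b), which follow from the pattern-homogeneity chosen in Step 1 by a bookkeeping argument on types. Intersecting all the $A_m'$ (over $m$ and, within each $m$, iterating so that finitely many homogeneity requirements are met — since there are only finitely many $m\le n$ this is a finite iteration of $\kappa$-preserving shrinkings) yields the final $A$.

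\medskip

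The step I expect to be the genuine obstacle is \textbf{(2)(b)}, the transfer of alignment, together with the compatibility of the roots produced at different levels (item (1)(b) across several values of $m$ at once). The difficulty is that alignment of $u_{\vec\alpha}$ and $u_{\vec\beta}$ is a statement about \emph{interleaving} of the two sets, which is not captured by the one-tuple pattern of $\vec\alpha$ or of $\vec\beta$ separately but only by the joint two-tuple type $\tp(u_{\vec\alpha},u_{\vec\beta})$ as a function of $\tp(\vec\alpha,\vec\beta)$; so Step 1's coloring must already be a coloring of \emph{pairs} of $m$-tuples (hence of $[\kappa]^{2m}$, or of $[\kappa]^{\le 2m}$ to also see the overlap of the underlying index sets), and one must verify that homogeneity for this richer coloring is still finitely-valued after the cardinality-stabilization of Step 1 and still suffices to force the interleaving positions of $u_{\vec\alpha}\cap\alpha_i$-type quantities to match. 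Carrying out this type-bookkeeping carefully — in particular checking that the root $u_{\vec\alpha}$ extracted in Step 2 inherits the alignment behavior from its $m{+}1$-tuple extensions — is where the real work lies; everything else is a routine, if notationally heavy, iteration of the weakly compact partition property.
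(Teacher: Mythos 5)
Your overall architecture matches the paper's quite closely: stabilize $|u_{\vec\alpha}|$, color tuples of indices by the order-isomorphism type of the union of the $u_{\vec\alpha}$'s involved, pass to a homogeneous set via weak compactness, and extract the shorter $u_{\vec\alpha}$ as $\Delta$-system roots by downward recursion. You also correctly diagnose that the coloring must be on $2m$-tuples (the paper uses a single coloring $c:[\kappa]^{2n}\to H(\omega)$, which by end-extension handles all shorter levels simultaneously, rather than a separate coloring at each $m$, but that difference is largely cosmetic).

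However, there is a genuine gap exactly where you flag it: the verification of item (2)(b) is not a ``routine bookkeeping argument on types,'' and the proposal does not actually supply the argument. The difficulty is that homogeneity of the coloring only yields a relation of the form: for any two $m$-tuples $\vec\gamma,\vec\delta$ with $\tp(\vec\gamma,\vec\delta)=t$, if $\eta$ is the value shared at positions $k_0$ (in $u_{\vec\gamma}$) and $k_1$ (in $u_{\vec\delta}$), then $u_{\vec\gamma}(k_0)=u_{\vec\delta}(k_1)$ whenever $\tp$ is $t$. This alone does \emph{not} force $k_0=k_1$. To extract the contradiction from $k_0\neq k_1$, the paper proves a separate combinatorial result (Lemma~\ref{type_cycle_lemma}): for any aligned type $t$, there is a ``cycle'' $\langle\vec\gamma_j\mid j\le 2\ell\rangle$ with $\tp(\vec\gamma_{2j},\vec\gamma_{2j+1})=\tp(\vec\gamma_{2j+2},\vec\gamma_{2j+1})=t$ for $j<\ell$ and $\tp(\vec\gamma_0,\vec\gamma_{2\ell})=t$. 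Walking around this cycle, homogeneity forces $u_{\vec\gamma_{2j}}(k_0)$ to be a single fixed ordinal $\xi$ for all $j$; but then $\tp(\vec\gamma_0,\vec\gamma_{2\ell})=t$ also forces $u_{\vec\gamma_{2\ell}}(k_1)=\xi$, so $k_0=k_1$, contradiction. This cycle lemma is itself a nontrivial induction on the width of the type and is a distinct idea not present in your proposal — it is the key piece needed to turn the pairwise homogeneity into the alignment conclusion, and without it your Step~2 ``bookkeeping'' claim does not go through.

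A second, smaller issue: you should be careful about the order of shrinkings. Because the $u_{\vec\alpha}$ for $|\vec\alpha|=m-1$ are \emph{defined} as roots depending on the current homogeneous set, ``intersecting all the $A_m'$'' only works if done in the correct downward order, with the coloring fixed once at the start (as in the paper) or with an argument that roots are stable under passage to further unbounded subsets. This is fixable but worth making explicit.
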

To prove Lemma \ref{strong_delta_system_lemma}, we require one further technical lemma.
\begin{lemma} \label{type_cycle_lemma}
  Suppose that $0 < k < \ell < \omega$ and that $t$ is an aligned type of width
  $k$ and length $\ell$. Then for some positive number $m$ there exists a sequence
  $\langle u_j \mid j \leq 2m \rangle$ of elements of $[\ord]^k$ such that
  \begin{itemize}
    \item $\tp(u_{2j}, u_{2j+1}) = \tp(u_{2j + 2}, u_{2j + 1}) = t$ for all $j < m$, and
    \item $\tp(u_0, u_{2m}) = t$.
  \end{itemize}
\end{lemma}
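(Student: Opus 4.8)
The plan is to realize the whole cyclic configuration inside a finite linear order — equivalently, inside an initial segment of $\omega$ — with the placement dictated by the internal structure of the aligned type $t$. Reading $t$ as a word in $\{0,1,2\}^\ell$, the alignment hypothesis says precisely that the running counts of $0$'s and of $1$'s coincide at every position carrying a $2$; hence each maximal block of $t$ strictly between two consecutive $2$'s, together with the initial and the final such block, contains as many $0$'s as $1$'s. Thus $t = w_0 \, 2 \, w_1 \, 2 \cdots 2 \, w_r$, where $r = |t^{-1}\{2\}|$ and each $w_i$ is a (possibly empty) \emph{balanced} word over $\{0,1\}$, with at least one $w_i$ nonempty because $k < \ell$. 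I fix ordinals $c_1 < \cdots < c_r$ and require every $u_j$ in the cycle to contain all of them — these will occupy the $2$-positions. The $k-r = \ell-k$ remaining (``private'') entries of each $u_j$ are then distributed among the open intervals $(-\infty,c_1), (c_1,c_2), \dots, (c_r,+\infty)$, with $d_i := |w_i^{-1}\{0\}| = |w_i^{-1}\{1\}|$ of them placed in the $i$-th interval. Since the constraints relating the private parts of two of the $u_j$ are confined to a single interval, and the ``increasing'' constraint inside a single $u_j$ is automatic once the intervals are used in order, the task reduces to the following one, to be solved independently in each interval, and with a single $m$ serving all intervals at once: given a balanced word $w$ of length $2d$, construct a cycle $\langle u_j \mid j \le 2m\rangle$ of $d$-element sets with $\tp(u_{2j}, u_{2j+1}) = \tp(u_{2j+2}, u_{2j+1}) = w$ for $j < m$ and $\tp(u_0, u_{2m}) = w$.

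Fix such a $w$ and write $a_q$ (resp.\ $b_q$) for the position of the $q$-th $0$ (resp.\ $1$) in $w$. The relation $\tp(u_A, u_B) = w$ says exactly that inserting the elements of $u_A$ at the $0$-positions of $w$ and those of $u_B$ at the $1$-positions produces a strictly increasing sequence; equivalently, the $q$-th element of $u_A$ precedes the $q'$-th element of $u_B$ if and only if $a_q < b_{q'}$. Hence the desired cycle is nothing but a system of strict inequalities among the symbols $\{u_j(q) \mid j \le 2m,\ q < d\}$, and it is realizable in $\mathbb{Q}$ — a fortiori in a finite linear order — exactly when this system contains no directed cycle. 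The constrained pairs form the odd cycle $u_0 - u_1 - \cdots - u_{2m} - u_0$, and along it each $u_j$ plays a fixed ``$0$''- or ``$1$''-role, with the sole exception of $u_{2m}$, which plays ``$1$'' against $u_0$ but ``$0$'' against $u_{2m-1}$. The plan is to verify that for $m$ large enough — and since each $d_i \le \ell - k \le k$, the uniform choice $m = k$ then works in every interval — the system is acyclic. Composing the inequalities along any directed cycle either collapses a strictly increasing chain of genuine position-values back onto its own starting value, which is absurd, or, in the ``twisted'' case in which the walk winds around the odd cycle through $u_{2m}$, forces a strictly increasing chain of at least $2m+1$ pairwise distinct elements of $\{0, 1, \dots, 2d-1\}$, which is impossible once $2m + 1 > 2d$. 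Granting this, one embeds each interval's finite partial order into $\mathbb{Q}$ in any order-preserving fashion, places the blocks in their designated intervals, inserts $c_1, \dots, c_r$, and transports the resulting finite set of rationals to an initial segment of $\omega$; this yields the required $\langle u_j \mid j \le 2m\rangle$ with $m = k$.

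I expect the acyclicity verification to be the main obstacle: the delicate point is to rule out the short directed cycles that exploit the role-switching of the distinguished vertex $u_{2m}$, and to confirm that every genuine obstruction must in fact wind around the odd cycle and so collide with the counting bound $2m+1 > 2d$. The block decomposition of aligned types, the bookkeeping that reduces $t$ to a concatenation of balanced words across the fixed anchors $c_1, \dots, c_r$, and the final reassembly are all routine.
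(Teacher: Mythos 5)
Your approach is genuinely different from the paper's. The paper proceeds by induction on the width $k$, peeling off the last letter of $t$: a trailing $2$ is handled by appending a common large element, and a trailing $0$ or $1$ by an explicit ``stretch and rotate'' construction. You instead exploit the block structure of aligned types directly — decomposing $t$ as $w_0\,2\,w_1\,2\cdots 2\,w_r$ with each $w_i$ balanced, anchoring the $2$-positions at fixed ordinals $c_1<\cdots<c_r$, and reducing realizability of the cycle to the interval-by-interval acyclicity of a finite system of strict inequalities. This is a clean reduction, and your reason for choosing $m=k$ (each $d_i\leq \ell-k\leq k$) is sound; the block decomposition, the observation that each block is balanced precisely because $t$ is aligned, and the final reassembly via $\mathbb{Q}$ are all correct. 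In exchange for the paper's explicit construction you get a more conceptual, constraint-satisfaction perspective, at the cost of having to prove acyclicity.

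The gap is exactly where you flag it. Your dichotomy — either the position-values of a directed cycle increase strictly and return to their start, or the cycle ``winds around'' and produces a $(2m{+}1)$-chain of distinct values in $\{0,\dots,2d-1\}$ — does not cover all cases. Define the natural potential $P(j,q)=a_q$ for $j$ even, $b_q$ for $j$ odd; $P$ strictly increases along every constraint edge \emph{except} those on the distinguished edge $\{u_0,u_{2m}\}$, where $u_{2m}$ switches roles. So your first alternative rules out cycles avoiding that edge, and your second (counting) argument rules out cycles containing a path-segment that traverses the whole path $0,1,\dots,2m$. But a directed cycle can in principle use the $\{u_0,u_{2m}\}$ edge an even number of times with alternating orientation and winding number zero (e.g., $u_0(q_1)\to u_{2m}(q_2)\to u_0(q_3)\to u_0(q_1)$), and then it contains no long path-segment and is ruled out by \emph{neither} of your two cases. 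Such cycles are in fact impossible, but for a different reason: the constraints $a_{q_1}<b_{q_2}$, $b_{q_2}\leq b_{q_3}$, $b_{q_3}<a_{q_4}$, $a_{q_4}\leq a_{q_5}$, $\dots$ coming from alternating forward and backward uses of the distinguished edge, interleaved with path-segments at a single end, produce a strictly increasing chain of $a$- and $b$-values that must close on itself, independently of $m$. You need that separate chain argument to dispose of the winding-number-zero cycles; the claim ``every genuine obstruction must in fact wind around the odd cycle'' is not something you can assume — it is exactly what the extra argument establishes. Once that case is filled in, your bound $m=k$ works and the proof goes through.
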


\begin{proof}
  The proof is by induction on $k$. We will also assume that $t(\ell - 1) \in \{1,2\}$.
  The case in which $t(\ell - 1) = 0$
  is essentially the same as that in which $t(\ell - 1) = 1$ and is left to the
  reader. If $k = 1$, then either $t = 2$ or $t = 01$. We may take $m = 1$ in either case. For if $t=2$ then for any ordinal $\alpha$ the sequence
  $u_0 = u_1 = u_2 = \{\alpha\}$ is as desired, while if $t=01$ then for any ordinals $\alpha < \beta < \gamma$ the sequence $u_0 = \{\alpha\}$, $u_1 = \{\gamma\}$, $u_2 = \{\beta\}$  is as desired.

  Now assume that $k > 1$ and that we have established the lemma for all aligned
  types of width less than $k$.

  \textbf{Case 1: $t(\ell - 1) = 2$.} In this case, $t' = t \restriction (\ell - 1)$
  is an aligned type of width $k - 1$, hence by our inductive hypothesis
  there exists some positive $m$ and sequence $\langle u_j \mid j \leq 2m \rangle$
  of elements of $[\ord]^{k-1}$ witnessing the conclusion of the lemma for $t'$.
  Now take any $\beta>\bigcup_{j \leq 2m}u_j$, and
  let $u_j^\star = u_j \cup \{\beta\}$ for all $j$. Then $m$ and $\langle u_j^\star \mid
  j \leq 2m \rangle$ witness the conclusion of the lemma for $t$.

  \textbf{Case 2: $t(\ell - 1) = 1$.} The idea in this case is to  form a type $t'$ of
  width $k - 1$ and length $\ell-2$ by deleting the last $0$ and the last $1$ from $t$. The induction hypothesis then furnishes a sequence $\langle u_j\,|\,j\leq 2m\rangle$ as in the conclusion of the lemma for the type $t'$. It is then fairly straightforward (though notationally tedious) to modify $\langle u_j\,|\,j\leq 2m\rangle$ to form a sequence as desired for the original type $t$.

  More formally, let $\ell_0 < \ell$ be such that $t(\ell_0) = 0$ and $t(i) = 1$ for all $i$ with $\ell_0 < i < \ell$. Such an $\ell_0$ must exist
  since $t$ is aligned. Let $k_0 = |t^{-1}[\{1,2\}] \cap \ell_0|$.
  This number $k_0$ has the following significance: if $u,v\in[\ord]^k$ are such that $\tp(u,v) = t$, then $k_0$ is the number of elements of $v$ that are smaller than
  the largest element of $u$. In other words, $v(k_0)$ is the least element of $v$ larger
  than every element of $u$. Now define $t' : (\ell - 2) \rightarrow 3$
  by letting
  \[
    t'(i) =
    \begin{cases}
      t(i) & \text{if } i < \ell_0 \\
      1 & \text{if } \ell_0 \leq i < \ell - 2
    \end{cases}
  \]
This defines an aligned type $t'$ of width $k - 1$. 
  Apply the induction hypothesis to obtain a positive $m$ and a
  sequence $\langle u_j \mid j \leq 2m \rangle$ of elements of $[\ord]^{k-1}$
  witnessing the conclusion of the lemma for $t'$. By ``stretching" each $u_j$
  if necessary, we may assume that every element of every $u_j$ is a limit of limit ordinals.

  We now augment the elements of the sequence $\langle u_j \mid j \leq 2m \rangle$ to form a sequence $\langle u^\star_j \mid j \leq 2n \rangle$ for $t$. Two cases are straightforward: those in which $k_0=0$ and those in which $k_0=k-1$. In the latter case, for example, take $\{\beta_j\,|\,j\leq 2m\}>\bigcup_{j\leq 2m}u_j$ such that
  \begin{itemize}
    \item for all $j, j' \leq 2m$, if $j$ is even and $j'$ is odd, then $\beta_j < \beta_{j'}$;
    \item $\beta_0 < \beta_{2m}$.
  \end{itemize}
 Letting $u_j^\star=u_j\cup\{\beta_j\}$ then defines a sequence as desired for $t$. The case of $k_0=0$ is similar. Therefore assume that $0<k_0<k-1$. We will assume as well that $u_j(k_0) > u_{j'}(k_0 - 1)$ for all odd $j,j'<2m$. We are free to assume this because if $\tp(u,v) = t'$ then $v(k_0)$ is larger than every element of $u$, hence
  $\tp(u,w) = t'$ for any $w$ with $w\!\restriction\! k_0 = v\! \restriction\! k_0$
  and $w\! \restriction\! [k_0, k-1)$ consisting of arbitrarily large ordinals.

 We now define limit ordinals $\langle \beta_j \mid j \leq 2m \rangle$ so that $u_j < \beta_j$ for
  each $j \leq 2m$. We first do this for the even $j \leq 2m$. First let $\beta_0$ be a limit ordinal such that $u_1(k_0-1)<\beta_0<u_1(k_0)$, and let $\beta_{2m}$ be a limit ordinal such that $u_{2m-1}(k_0-1)<\beta_{2m}<u_{2m-1}(k_0)$. For any other even $0<j<2m$ let $\beta_j>u_j$ be a limit ordinal satisfying
  \[
    \max\{u_{j-1}(k_0 - 1), u_{j+1}(k_0 - 1)\}<\beta_j<\min\{u_{j-1}(k_0),
    u_{j+1}(k_0)\}.
  \]
  This is all possible by the assumptions of the preceding paragraphs.

   Lastly, let $\beta^\star$ be a limit ordinal larger than any ordinal so far considered, and for all odd $j < 2m$ let $\beta_j = \beta^\star$. (More precisely we require that $\beta^\star > \beta_j$ for all even $j \leq 2m$
  and $\beta^\star > u_j$ for all odd $j < 2m$.) Now let $u^\star_j = u_j \cup \{\beta_j\}$ for each $j \leq 2m$. By construction, $\tp(u^\star_{2j},
  u^\star_{2j+1}) = \tp(u^\star_{2j+2}, u^\star_{2j + 1}) = t$ for all $j < m$.
  However, $\tp(u^\star_0, u^\star_{2m}) = t$ may fail to hold: although $\tp(u_0, u_{2m})$ indeed equals $t'$, it may not be the case that $u^\star_{2m}(k_0 - 1)<u^\star_0(k-1) < u^\star_{2m}(k_0)$, as $\tp(u^\star_0, u^\star_{2m}) = t$ would require.

  We address this possibility by adding a $u^\star_{2m + 1}$ and $u^\star_{2m + 2}$ to our sequence and
  then ``rotating'' it two steps.  First, let $u^\star_{2m + 2}$ be any element of $[\ord]^k$ such that
  \begin{itemize}
    \item $\tp(u^\star_0, u^\star_{2m + 2}) = t$,
    \item if $k_0 > 0$, then $u^\star_{2m + 2}(k_0 - 1) < u^\star_{2m}(k_0)$
    (this is possible since $u^\star_0(k-2) < u^\star_{2m}(k_0)$ and the latter
    is a limit ordinal), and
    \item $u^\star_{2m + 2}(k_0) > \beta^\star$.
  \end{itemize}
  Now define $u^\star_{2m + 1}$ by letting $u^\star_{2m + 1} \restriction (k-1) =
  u^\star_0 \restriction (k-1)$ and letting $u^\star_{2m + 1}(k-1)$ be any ordinal
  $\gamma > u^\star_0(k-2)$ such that
  \[
    \max\{u^\star_{2m + 2}(k_0 -1), u^*_{2m}(k_0 - 1)\}<\gamma < u^\star_{2m}(k_0)
  \]
  (and hence $\gamma < u^\star_{2m + 2}(k_0)$). Then
  \[
    \tp(u^\star_{2m + 1}, u^\star_{2m}) = \tp(u^\star_{2m + 1}, u^*_{2m + 2}) =
    \tp(u^\star_0, u^\star_{2m + 2}) = t.
  \]
  Therefore, if we define $\langle v_j \mid j \leq 2m + 2 \rangle$
  by setting $v_0 = u^\star_{2m + 1}$, setting $v_1 = u^\star_{2m + 2}$ and, for $2 \leq j < 2m + 2$,
  setting $v_j = u^\star_{j - 2}$, we see that $\langle v_j \mid j \leq 2m + 2 \rangle$
  witnesses the conclusion of the theorem for $t$.
\end{proof}
Readers may better appreciate the necessity of some argument like the above after constructing a cycle as in Lemma \ref{type_cycle_lemma} for the type $001011$. For yet more on these matters, readers are referred to \cite{types}.

We turn now to the proof of our main lemma:

\begin{proof}[Proof of Lemma \ref{strong_delta_system_lemma}]
  By the weak compactness of $\kappa$ there exists a natural number $\ell$ and a $B\in [\kappa]^\kappa$ such that $|u_{\vec{\alpha}}| = \ell$ for all $\vec{\alpha} \in [B]^{n}$.

  For any $\vec{\gamma} = \langle \gamma_0, \ldots, \gamma_{2n-1} \rangle
  \in [B]^{2n}$ and $I \in [2n]^n$ let $\vec{\gamma}[I] =
  \langle \gamma_i \mid i \in I \rangle$,  and let
  \[
    U_{\vec{\gamma}} = \left\{\gamma_0, \ldots, \gamma_{2n-1}\right\} \ \cup \bigcup_{I \in [2n]^n}
    u_{\vec{\gamma}[I]}.
  \]
  Recall that $H(\omega)$ denotes the set of all hereditarily finite sets (in particular,
  $H(\omega)$ is itself a countable set). Define a coloring $c:[B]^{2n} \rightarrow H(\omega)$ by letting
  \[
    c(\vec{\gamma}) = \big\langle \otp(U_{\vec{\gamma}}), ~ \langle \xi_i \mid i < 2n \rangle, ~
    \langle v_I \mid I \in [2n]^n \rangle\big\rangle,
  \]
  where the latter sequences record the images of the $\gamma_i$ and $u_{\vec{\gamma}[I]}$, respectively, under the order-isomorphism $h:U_{\vec{\gamma}}\to \otp(U_{\vec{\gamma}})$; more precisely,
  \begin{itemize}
    \item $\xi_i = h(\gamma_i)$ for each $i < 2n$,
    \item $v_I = h[u_{\vec{\gamma}[I]}]$ for each $I \in [2n]^n$.
  \end{itemize}

  Again by the weak compactness of $\kappa$ we may fix a set $A \in [B]^\kappa$ such that
  $c$ is constant on $[A]^{2n}$. 

  We now define $u_{\vec{\alpha}}$ for $\vec{\alpha}$ in $[A]^{\leq n}$
  by reverse induction on $|\vec{\alpha}|$. The base case
  $\{ u_{\vec{\alpha}}\,|\,\vec{\alpha} \in [A]^n\}$ is given. In the induction step,
  \begin{itemize}
  \item $0<m<n$,
  \item $u_{\vec{\beta}}$ is defined for all
  $\vec{\beta}$ in $[A]^{\leq n}$ of length greater than $m$, and
  \item the $u_{\vec{\beta}}$ with $\vec{\beta}$ in $ [A]^{m+1}$ are all of the same size.
\end{itemize}
Now fix $\vec{\alpha}$ in $[A]^m$.

  \begin{claim} \label{claim_36}
    The set $\left\{u_{\vec{\alpha} ^\frown \langle \beta \rangle} ~ \middle| ~
    \beta \in A\backslash( \max(\vec{\alpha})+1)\right\}$ forms a $\Delta$-system
    with a root $r$ such that $r < \beta$ and $r \sqsubseteq u_{\vec{\alpha} ^\frown \beta}$
    for every $\beta\in A$ with $\beta > \max(\vec{\alpha})$.
  \end{claim}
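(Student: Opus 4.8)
The plan is to extract the $\Delta$-system structure directly from the homogeneity of the coloring $c$ on $[A]^{2n}$, using the fact that $|\vec{\alpha}^\frown\langle\beta\rangle| = m+1 \leq n$, so that every such $(m+1)$-tuple sits inside some $\vec{\gamma}\in[A]^{2n}$ as a subtuple $\vec{\gamma}[I]$ for a suitable $I\in[2n]^{m+1}\subseteq[2n]^{\leq n}$. First I would pad: given $\beta_0, \beta_1 \in A$ with $\beta_0, \beta_1 > \max(\vec{\alpha})$ (say $\beta_0 < \beta_1$, the case $\beta_0 = \beta_1$ being trivial), choose $\vec{\gamma}\in[A]^{2n}$ whose first $m$ entries are $\vec{\alpha}$ and which contains both $\beta_0$ and $\beta_1$ among its entries; this is possible since $|A| = \kappa$. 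Then $u_{\vec{\alpha}^\frown\langle\beta_0\rangle}$ and $u_{\vec{\alpha}^\frown\langle\beta_1\rangle}$ both appear among the $u_{\vec{\gamma}[I]}$, and hence are recorded (via the order-isomorphism $h : U_{\vec{\gamma}} \to \otp(U_{\vec{\gamma}})$) by fixed elements $v_I$ of the constant value $c(\vec{\gamma})$.

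The key observation is then that the position of $\vec{\alpha}$ inside $h[U_{\vec{\gamma}}]$ is the same for every such $\vec{\gamma}$, since $c$ is constant; more carefully, because $\vec{\alpha}$ occupies the first $m$ coordinates, the images $\langle \xi_0, \ldots, \xi_{m-1}\rangle$ are fixed across all choices, and likewise the sets $v_I$ coding $u_{\vec{\alpha}^\frown\langle\beta\rangle}$ depend only on $I$, not on which $\beta \in A \setminus (\max(\vec{\alpha})+1)$ was used. Define $r$ to be the set of ordinals in $u_{\vec{\alpha}^\frown\langle\beta_0\rangle}$ whose $h$-images lie in the intersection $v_{I_0}\cap v_{I_1}$ of the two codes, where $I_0, I_1$ pick out the positions of $\beta_0$ and $\beta_1$; by homogeneity this is independent of $\beta_0$. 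To see that $\{u_{\vec{\alpha}^\frown\langle\beta\rangle} \mid \beta \in A\setminus(\max(\vec{\alpha})+1)\}$ is a $\Delta$-system with root $r$, take any two distinct $\beta, \beta'$ from this set; embed both into a common $\vec{\gamma}$ as above; the order-isomorphism $h$ then carries $u_{\vec{\alpha}^\frown\langle\beta\rangle} \cap u_{\vec{\alpha}^\frown\langle\beta'\rangle}$ onto $v_I \cap v_{I'}$, which by homogeneity equals the fixed code for $r$, so the pairwise intersection is exactly $r$. That $r < \beta$ for every relevant $\beta$ follows because $r \subseteq u_{\vec{\alpha}^\frown\langle\beta_0\rangle}$ for the least witness $\beta_0$ and one arranges (or observes from the coding) that the root sits below the new point; and $r \sqsubseteq u_{\vec{\alpha}^\frown\langle\beta\rangle}$ follows since the root consists of the ordinals common to all the $u_{\vec{\alpha}^\frown\langle\beta\rangle}$, which by the structure of the coding are an initial segment of each.

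The main obstacle will be verifying that $r$ is genuinely an \emph{initial segment} of each $u_{\vec{\alpha}^\frown\langle\beta\rangle}$ and that $r < \beta$; this requires care in setting up the coloring $c$ so that these features are detectable, and it may be cleanest to first thin $A$ by an additional application of weak compactness to a coloring that records, for each pair $I \subsetneq I'$ in $[2n]^{\leq n}$, the type $\tp$ of the corresponding $u$-sets and where the ``new'' ordinal $\beta$ falls relative to them — indeed this is exactly the kind of bookkeeping that Lemma \ref{type_cycle_lemma} and the alignment machinery were set up to handle. I would therefore either fold the relevant type-information into $c$ from the outset or perform one more homogenization pass; the remainder is then a routine unwinding of the homogeneity, as sketched above.
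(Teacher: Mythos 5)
Your plan — extracting the $\Delta$-system directly from the homogeneity of $c$ by reading off where $u_{\vec{\alpha}^\frown\langle\beta\rangle}$ sits inside the constant tuple — is the right instinct, but as written it has two gaps that the paper's proof specifically navigates.

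First, $c$ records $v_I$ only for $I \in [2n]^n$, not for $I \in [2n]^{\leq n}$ as you assume. So when $m < n-1$, the set $u_{\vec{\alpha}^\frown\langle\beta\rangle}$ (index length $m+1 < n$) is \emph{not} among the sets $u_{\vec{\gamma}[I]}$ coded by $c$: those are given only at index length exactly $n$, and the shorter ones are roots produced at later steps of the reverse induction. So the step where you read the position of $u_{\vec{\alpha}^\frown\langle\beta_0\rangle}$ off a fixed $v_I$ simply has nothing to read. The paper sidesteps this by padding: it works with $n$-tuples $\vec{\varepsilon}(\beta_j) = \vec{\alpha}^\frown\langle\beta_j,\delta_0,\dots,\delta_{n-m-2}\rangle$, obtains a $\Delta$-system for the $u_{\vec{\varepsilon}(\beta_j)}$, and then transfers it down to $u_{\vec{\alpha}^\frown\langle\beta_j\rangle}$ using the already-established facts that these are initial segments of $u_{\vec{\varepsilon}(\beta_j)}$ of constant size.

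Second, and more seriously, your two-$\beta$ argument does not show the pairwise intersection is a \emph{fixed set of ordinals}. For a given pair, homogeneity gives $h\bigl[u_{\vec{\varepsilon}(\beta_0)}\cap u_{\vec{\varepsilon}(\beta_1)}\bigr]=v_{I_a}\cap v_{I_b}$ where $h$ is the collapse of $U_{\vec{\gamma}}$; for a different pair and a different $\vec{\gamma}'$ you get the same right-hand side but under a different collapse $h'$, whose domain $U_{\vec{\gamma}'}$ may be almost disjoint from $U_{\vec{\gamma}}$. This only says the intersections are order-isomorphic, not equal, so your $r := h^{-1}\bigl[v_{I_a}\cap v_{I_b}\bigr]$ is \emph{a priori} tied to the particular $\vec{\gamma}$ you chose. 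The paper's proof resolves exactly this by taking \emph{three} ordinals $\beta_0<\beta_1<\beta_2$ and three witnesses $\vec{\gamma}_0,\vec{\gamma}_1,\vec{\gamma}_2$, each containing two of the $\beta$'s in the same positions. The shared anchor (e.g.\ $\beta_0$ appearing in both $\vec{\gamma}_1$ and $\vec{\gamma}_2$) is what allows one to transfer membership of a \emph{specific} ordinal $\eta$ across the different collapses, yielding the three-way equivalence
$\eta\in u_{\vec{\varepsilon}(\beta_0)}\cap u_{\vec{\varepsilon}(\beta_1)} \Leftrightarrow \eta\in u_{\vec{\varepsilon}(\beta_0)}\cap u_{\vec{\varepsilon}(\beta_2)}\Leftrightarrow \eta\in u_{\vec{\varepsilon}(\beta_1)}\cap u_{\vec{\varepsilon}(\beta_2)}$. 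Your suggestion of an extra homogenization pass does not on its face repair this: the difficulty is not insufficient color but the lack of a common domain for comparison, which the anchoring trick supplies.
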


  \begin{proof}
    First suppose that $m=n-1$. Consider any $\beta_0 < \beta_1 < \beta_2$
    in $A\backslash( \max(\vec{\alpha})+1)$ together with any $\vec{\gamma}_0, \vec{\gamma}_1, \vec{\gamma}_2$ in $ [A]^{2n}$
  such that
    \begin{itemize}
      \item $\vec{\alpha} ^\frown \langle \beta_1, \beta_2\rangle \sqsubseteq \vec{\gamma_0}$,
      \item $\vec{\alpha} ^\frown \langle \beta_0, \beta_2\rangle \sqsubseteq \vec{\gamma_1}$,
      \item $\vec{\alpha} ^\frown \langle \beta_0, \beta_1\rangle \sqsubseteq \vec{\gamma_2}$.
    \end{itemize}
   Let
    $\vec{\varepsilon}(\beta_j) = \vec{\alpha} ^\frown \langle \beta_j\rangle$ for $j < 3$. Then $c(\vec{\gamma_0}) = c(\vec{\gamma_1}) = c(\vec{\gamma_2})$ implies that
    \[
      \eta \in u_{\vec{\varepsilon}(\beta_0)} \cap u_{\vec{\varepsilon}(\beta_1)} \Leftrightarrow
      \eta \in u_{\vec{\varepsilon}(\beta_0)} \cap u_{\vec{\varepsilon}(\beta_2)} \Leftrightarrow
      \eta \in u_{\vec{\varepsilon}(\beta_1)} \cap u_{\vec{\varepsilon}(\beta_2)}
    \]
    for any $\eta$ in $\kappa$. It follows immediately that $\left\{u_{\vec{\alpha}
    ^\frown \langle \beta \rangle} ~ \middle| ~
    \beta \in A\backslash( \max(\vec{\alpha})+1)\right\}$ forms a $\Delta$-system. Let
    $r$ be its root. Routine pigeonhole arguments together with the homogeneity of $A$ then show that $r < \beta$ and $r \sqsubseteq u_{\vec{\alpha}
    ^\frown \langle \beta \rangle}$ for every $\beta \in A$ with $\beta > \max({\vec{\alpha}})$.

    Now suppose that $m<n-1$. We argue again essentially as above, beginning instead with a sequence $\beta_0 < \beta_1 < \beta_2<\delta_0<\dots<\delta_{n-m-2}$
    in $A\backslash( \max(\vec{\alpha})+1)$. Again take $\vec{\gamma}_0\sqsupseteq\vec{\alpha}^\frown\langle\beta_1,\beta_2,\delta_0,\dots,\delta_{n-m-2}\rangle$; similarly for $\vec{\gamma}_1$ and $\vec{\gamma}_2$. Let
    $\vec{\varepsilon}(\beta_j)=\vec{\alpha}^\frown\langle\beta_j,\delta_0,\dots,\delta_{n-m-2}\rangle$ for each $\beta_j$. Then again the $c$-homogeneity of $A$ implies that $\left\{u_{\vec{\varepsilon}(\beta)} ~ \middle| ~ \beta\in (\alpha_{m-1},\delta_0)\cap A\right\}$
    forms a $\Delta$-system. Since $u_{\vec{\alpha} ^\frown \langle \beta \rangle}\sqsubseteq u_{\vec{\varepsilon}(\beta)}$ for each such $\beta$, and since the sets $u_{\vec{\alpha} ^\frown \langle \beta \rangle}$ are all of the same size,
    $\left\{u_{\vec{\alpha} ^\frown \langle \beta \rangle}~ \middle| ~ \beta\in (\alpha_{m-1},\delta_0)\cap A\right\}$ forms a $\Delta$-system as well. As $\delta_0$ was arbitrary, $\left\{u_{\vec{\alpha} ^\frown \langle \beta \rangle} ~ \middle| ~
    \beta \in A\backslash( \max(\vec{\alpha})+1)\right\}$ forms a $\Delta$-system; pigeonhole and homogeneity arguments, as above, secure the remainder of the claim.
  \end{proof}

  Now let $u_{\vec{\alpha}}$ equal the root of the $\Delta$-system
  $\left\{u_{\vec{\alpha} ^\frown \langle \beta \rangle} ~ \middle| ~
  \beta \in A\backslash( \max(\vec{\alpha})+1)\right\}$. By Claim~\ref{claim_36},
  the homogeneity of $A$, and a pigeonhole argument, we see that we have satisfied item (1) of Lemma \ref{strong_delta_system_lemma}. To see that we satisfy item (2a) as well (and hence that our third inductive assumption was justified), observe that we may calculate $|u_{\vec{\alpha}}|$ from the constant value of $c\!\restriction\![A]^{2n}$, which we denote by $\big\langle \ell^*,
  \langle \xi^*_i \mid i < 2n \rangle, \langle v^*_I \mid I \in [2n]^n \rangle\big\rangle$; the point is that this computation depends only on the size, $m$, of $\vec{\alpha}$. To this end, let $\mathcal{I}_m=\{I\in [2n]^n\,|\,I\cap m=m\}$. Consideration of $c(\vec{\gamma})$
  for any $\vec{\gamma}\in [A]^{2n}$ end-extending $\vec{\alpha}$ then shows that $|u_{\vec{\alpha}}| = |\bigcap_{I\in\mathcal{I}_m}v^*_{I}|$.

  It remains to verify item (2b). To this end, fix $m \leq n$ and an aligned pair of $m$-tuples
  $\vec{\alpha}, \vec{\beta} \subseteq A$. Suppose for the sake of contradiction that
  $u_{\vec{\alpha}}$ and $u_{\vec{\beta}}$ are not aligned and fix
  $\eta \in u_{\vec{\alpha}} \cap u_{\vec{\beta}}$ such that $k_0 =
  |u_{\vec{\alpha}} \cap \eta| \neq k_1 = |u_{\vec{\beta}} \cap \eta|$.
  Let $t = \tp(\vec{\alpha}, \vec{\beta})$.
  Observe that if $\tp(\vec{\gamma}, \vec{\delta}) = t$ for some $\vec{\gamma},
  \vec{\delta} \in [A]^m$
  then $u_{\vec{\gamma}}(k_0) = u_{\vec{\delta}}(k_1)$, by the homogeneity of $A$.

  By Lemma \ref{type_cycle_lemma} there exists an integer $\ell \geq 1$ and
  a sequence $\langle \vec{\gamma}_j \mid j \leq 2\ell \rangle$ of elements of
  $[A]^m$ such that $\tp(\vec{\gamma}_0, \vec{\gamma}_{2\ell}) = t$ and $\tp(\vec{\gamma}_{2j}, \vec{\gamma}_{2j+1}) =
  \tp(\vec{\gamma}_{2j+2}, \vec{\gamma}_{2j+1}) = t$
  for all $j < \ell$. Let
  $\xi = u_{\vec{\gamma}_0}(k_0)$.

  \begin{claim}
     $u_{\vec{\gamma}_{2j}}(k_0) = \xi$ for all $j < m$.
  \end{claim}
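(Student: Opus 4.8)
The plan is to establish the claim by a short induction on $j$, pushing the identity around the cycle $\langle \vec{\gamma}_j \mid j \leq 2\ell \rangle$ produced by Lemma \ref{type_cycle_lemma}; I will in fact prove $u_{\vec{\gamma}_{2j}}(k_0) = \xi$ for every $j \leq \ell$, which is the range actually needed below. The sole tool is the observation recorded just before the claim: for any $\vec{\gamma}, \vec{\delta} \in [A]^m$ with $\tp(\vec{\gamma}, \vec{\delta}) = t$ one has $u_{\vec{\gamma}}(k_0) = u_{\vec{\delta}}(k_1)$. This holds because the witnessing element $\eta = u_{\vec{\alpha}}(k_0) = u_{\vec{\beta}}(k_1)$ of $u_{\vec{\alpha}} \cap u_{\vec{\beta}}$ occupies positions $k_0$ and $k_1$ in $u_{\vec{\alpha}}$ and $u_{\vec{\beta}}$ respectively, and this configuration is computed from the constant value of $c \restriction [A]^{2n}$ exactly as in the verification of item (2a); hence it recurs for every type-$t$ pair of $m$-tuples drawn from $A$.

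For the base case $j = 0$, the identity $u_{\vec{\gamma}_0}(k_0) = \xi$ is just the definition of $\xi$. For the inductive step, fix $j < \ell$ and assume $u_{\vec{\gamma}_{2j}}(k_0) = \xi$. By Lemma \ref{type_cycle_lemma} we have $\tp(\vec{\gamma}_{2j}, \vec{\gamma}_{2j+1}) = t$, so the observation gives $u_{\vec{\gamma}_{2j}}(k_0) = u_{\vec{\gamma}_{2j+1}}(k_1)$; likewise $\tp(\vec{\gamma}_{2j+2}, \vec{\gamma}_{2j+1}) = t$ gives $u_{\vec{\gamma}_{2j+2}}(k_0) = u_{\vec{\gamma}_{2j+1}}(k_1)$. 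Chaining these, $u_{\vec{\gamma}_{2j+2}}(k_0) = u_{\vec{\gamma}_{2j}}(k_0) = \xi$, which completes the induction and hence the claim.

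The point of the claim, to be carried out immediately afterward, is the following: instantiating it at $j = \ell$ gives $u_{\vec{\gamma}_{2\ell}}(k_0) = \xi = u_{\vec{\gamma}_0}(k_0)$, while $\tp(\vec{\gamma}_0, \vec{\gamma}_{2\ell}) = t$ gives $u_{\vec{\gamma}_0}(k_0) = u_{\vec{\gamma}_{2\ell}}(k_1)$; hence $u_{\vec{\gamma}_{2\ell}}(k_0) = u_{\vec{\gamma}_{2\ell}}(k_1)$, and since the increasing enumeration of a finite set of ordinals is injective this forces $k_0 = k_1$, contradicting the choice of $\eta$ and so establishing item (2b). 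I do not anticipate any genuine obstacle: the induction is one line once the homogeneity observation is in hand, and that observation is itself only the bookkeeping already performed for item (2a), now read at the pair of positions $(k_0,k_1)$ rather than at a single position. The one place requiring mild care is checking that, for $m < n$, the datum ``$\eta$ sits at positions $(k_0,k_1)$'' really is determined by $c \restriction [A]^{2n}$ together with the recursive definition of the $u_{\vec{\alpha}}$; but this is exactly what the preceding observation asserts, and I am entitled to invoke it.
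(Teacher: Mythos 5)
Your proof is correct and proceeds exactly as the paper's does: the same induction on $j$, driven by the same homogeneity observation that $\tp(\vec{\gamma},\vec{\delta}) = t$ forces $u_{\vec{\gamma}}(k_0) = u_{\vec{\delta}}(k_1)$, chained across the odd-indexed intermediate tuples. You also correctly note that the stated range ``$j < m$'' in the claim should read ``$j \leq \ell$'' (the paper reuses the letter $m$, but the subsequent invocation of the claim at $2\ell$ makes the intended range clear), which is a typographical slip in the source, not a gap in the argument.
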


  \begin{proof}
    We proceed by induction on $j$. For $j = 0$, the claim is trivial. Suppose that
    $0 < j < m$ and $u_{\vec{\gamma}_{2j - 2}}(k_0) = \xi$. Then, since
    $\tp(\vec{\gamma}_{2j-2}, \vec{\gamma}_{2j-1}) = t = \tp (\vec{\gamma}_{2j},
    \vec{\gamma}_{2j - 1})$, we have that $\xi = u_{\vec{\gamma}_{2j-2}}(k_0) =
    u_{\vec{\gamma}_{2j-1}}(k_1) = u_{\vec{\gamma}_{2j}}(k_0)$, as desired.
  \end{proof}

It follows from the claim that $u_{\vec{\gamma}_{2m}}(k_0) = \xi$. However, since
  $\tp(\vec{\gamma}_0, \vec{\gamma}_{2m}) = t$, we also have
  $u_{\vec{\gamma}_{2m}}(k_1) = \xi$, contradicting the fact that $k_0 \neq k_1$.
  This completes the proof of the lemma.
\end{proof}

\section{Hechler forcing and $\lim^1 \mathbf{A}$} \label{lim_1_section}

We begin this section with a few useful observations about Hechler forcing and
finite-support iterations thereof. Recall that conditions in Hechler forcing are
pairs $p = (s_p, f_p)$, where $s_p \in {^{<\omega}}\omega$ and
$f_p \in {^\omega}\omega$. We will often call $s_p$ the \emph{stem} of $p$. If $p$ and $q$
are conditions then $q \leq p$ if and only if \begin{itemize}
\item $s_q \supseteq s_p$,
\item $f_q \geq f_p$, and
\item $s_q(i) > f_p(i)$ for all
$i \in \dom(s_q) \setminus \dom(s_p)$.\end{itemize}
Observe that any two conditions with the same stem are compatible.

If $\bb{P}$ is a finite-support iteration
of Hechler forcings, then the set of $p \in \bb{P}$ such that, for all
$\alpha \in \dom(p)$,
\begin{itemize}
  \item $p \restriction \alpha$ decides the value of $\dot{s}_{p(\alpha)}$, and
  \item $\dot{f}_{p(\alpha)}$ is a nice $\bb{P}_\alpha$-name for an element of
  ${^\omega} \omega$
\end{itemize}
is dense in $\bb{P}$. We will in general implicitly assume
that all of our conditions come from this dense set.

The following situation will frequently arise:

\begin{lemma} \label{compatibility_lemma}
  Suppose that
  \begin{itemize}
    \item $\bb{P}$ is a finite-support iteration of Hechler forcings,
    \item $\mathcal{A}$ is a finite collection of conditions in $\bb{P}$ such
    that $s_{p(\alpha)} = s_{q(\alpha)}$ for all $p,q \in \mathcal{A}$ and all $\alpha \in \dom(p) \cap
    \dom(q)$,
    \item $r \in \bb{P}$ and
    $r \leq p \restriction (\max(\dom(r)) + 1)$ for every $p \in \mathcal{A}$.
  \end{itemize}
  Then the set $\mathcal{A} \cup \{r\}$ has a lower bound in $\bb{P}$.
\end{lemma}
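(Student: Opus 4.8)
The plan is to build a lower bound for $\mathcal{A} \cup \{r\}$ by working coordinate-by-coordinate along the iteration. Write $\mathcal{A} = \{p_0, \dots, p_{k-1}\}$ and set $\beta = \max(\dom(r))$, so that $r \leq p_j \restriction (\beta + 1)$ for every $j < k$. The key structural observation is that the conditions $p_j$ pairwise agree on their stems wherever their domains overlap, so for each $\alpha$ in $\bigcup_{j<k}\dom(p_j)$ there is a well-defined common stem $s(\alpha) := s_{p_j(\alpha)}$ for any (equivalently, every) $j$ with $\alpha \in \dom(p_j)$, and by the remark just before the lemma any two Hechler conditions with the same stem are compatible. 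I would define the desired lower bound $\bar{q}$ to have domain $\dom(r) \cup \bigcup_{j<k}\dom(p_j)$, agreeing with $r$ below and at $\beta$, and handling coordinates $\alpha > \beta$ by hand.

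First I would dispatch the part of the domain at or below $\beta$: set $\bar{q}\restriction(\beta+1) = r\restriction(\beta+1)$. Since $r \leq p_j\restriction(\beta+1)$ for each $j$, this already sits below every $p_j\restriction(\beta+1)$, so all the iteration's compatibility requirements among these coordinates are inherited from $r$. The remaining work is to extend $\bar{q}$ to coordinates $\alpha \in \bigcup_{j<k}\dom(p_j)$ with $\alpha > \beta$. Proceeding by recursion on such $\alpha$ in increasing order, suppose $\bar{q}\restriction\alpha$ has been defined and forces $\bar{q}\restriction\alpha \leq p_j\restriction\alpha$ for every $j$ with $\dom(p_j)\cap[\beta+1,\alpha)$ relevant. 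Working in $V^{\bb{P}_\alpha}$, each $p_j(\alpha)$ with $\alpha \in \dom(p_j)$ is a Hechler condition with stem $s(\alpha)$ (decided by $\bar q\restriction\alpha$, since we may assume conditions come from the dense set described above) and some Hechler-generic-name top function $\dot f_{p_j(\alpha)}$; since finitely many Hechler conditions sharing a stem have a common lower bound — take the shared stem and the pointwise maximum of the top functions — let $\bar{q}(\alpha)$ be a $\bb{P}_\alpha$-name for such a lower bound of $\{p_j(\alpha) : j < k,\ \alpha\in\dom(p_j)\}$. Then $\bar{q}\restriction(\alpha+1) \leq p_j\restriction(\alpha+1)$ for every relevant $j$, continuing the recursion.

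Once all coordinates are handled, $\bar{q}$ is a condition in $\bb{P}$ (its domain is finite, being a finite union of finite sets) with $\bar{q} \leq r$ and $\bar{q} \leq p_j$ for every $j < k$, i.e., $\bar{q}$ is a lower bound for $\mathcal{A}\cup\{r\}$. The only point requiring a little care — and the one I would expect to be the main, if modest, obstacle — is the bookkeeping that the recursion genuinely maintains $\bar q\restriction\alpha \leq p_j\restriction\alpha$ as an $\bb{P}_\alpha$-forced statement for every $j$ simultaneously, so that the names $\dot f_{p_j(\alpha)}$ are all being interpreted over a generic extending all the relevant lower initial segments; this is exactly where the hypothesis $r \leq p_j\restriction(\max(\dom(r))+1)$ and the uniform-stem hypothesis are both used, the former to start the recursion and the latter to keep the coordinatewise amalgamations possible at every step. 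Everything else is routine verification against the definition of the Hechler order and of finite-support iteration.
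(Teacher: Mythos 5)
Your construction is essentially identical to the paper's. The paper defines the lower bound $q$ all at once --- domain $\dom(r) \cup \bigcup_{p\in\mathcal A}\dom(p)$, equal to $r$ up through $\max(\dom(r))$, and at higher coordinates taking the common stem together with a $\mathbb P_\alpha$-name for the pointwise maximum of the relevant $\dot f_{p(\alpha)}$'s --- whereas you present the same construction as a recursion on coordinates, but the resulting condition is the same, and the verification you sketch (stems match so the compatibility clause in the Hechler order is vacuous; the pointwise max dominates each $\dot f_{p(\alpha)}$) is exactly what the paper leaves as "straightforward."
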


\begin{proof}
  We define such a lower bound $q$ in $\bb{P}$ as follows. First, let $\dom(q) =
  \dom(r) \cup \bigcup_{p \in \mathcal{A}} \dom(p)$. Next, let $q
  \restriction (\max(\dom(r)) + 1) = r$. Finally, for all $\alpha \in \dom(q)
  \setminus (\max(\dom(r)) + 1)$, let $s_{q(\alpha)} = s_{p(\alpha)}$ for
  any $p \in \mathcal{A}$ with $\alpha \in \dom(p)$ and let $\dot{f}_{q(\alpha)}$ be a nice
  $\bb{P}_\alpha$-name for the pointwise maximum of the set $\{\dot{f}_{p(\alpha)}
  \mid p \in \mathcal{A}, ~ \alpha \in \dom(p)\}$. It is now straightforward to verify that
  $q$ is a condition in $\bb{P}$ and as desired.
\end{proof}

We now turn to proving a lemma about uniformizing families of conditions in
finite support iterations of Hechler forcings. We first need the following
consequence of weak compactness.

\begin{lemma} \label{weakly_compact_lemma}
  Suppose that $\kappa$ is a weakly compact cardinal, $n < \omega$, and that
  $\left\langle X_{\vec{\alpha}} ~\middle|~ \vec{\alpha} \in [\kappa]^{\leq n}\right\rangle$ is a family of
  sets such that $|X_{\vec{\alpha}}| < \kappa$ for every $\vec{\alpha} \in
  [\kappa]^{\leq n}$. Suppose also that $c$ is a function with domain
  $[\kappa]^{n+1}$ such that, for all $\vec{\beta} \in [\kappa]^{n+1}$,
  we have $c(\vec{\beta}) \in \prod_{i \leq n} X_{\vec{\beta} \restriction i}$.
  Then there is an unbounded set $A \subseteq \kappa$ such that, for all
  $i \leq n$, all $\vec{\alpha} \in [A]^i$, and all $\vec{\gamma}, \vec{\delta}
  \in [A]^{n+1}$ end-extending $\vec{\alpha}$, we have $c(\vec{\gamma})(i) =
  c(\vec{\delta})(i)$.
\end{lemma}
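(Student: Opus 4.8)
The plan is to homogenize $c$ one coordinate at a time. First I would record two easy reductions. (i) Since each $X_{\vec\alpha}$ has cardinality $<\kappa$, I may replace $c$ by its coordinatewise composition with injections $X_{\vec\alpha}\to\otp(X_{\vec\alpha})<\kappa$; homogenizing this composition homogenizes $c$, so I may assume each $X_{\vec\alpha}$ is an ordinal $<\kappa$. (ii) The displayed conclusion for a single fixed $i$ is inherited by subsets of $A$; hence it suffices to build a decreasing chain $\kappa\supseteq A_0\supseteq A_1\supseteq\cdots\supseteq A_n$ of unbounded sets in which $A_i$ witnesses the conclusion for coordinate $i$ only, and then set $A=A_n$. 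The case $i=0$ is immediate: $\vec\gamma\mapsto c(\vec\gamma)(0)$ is a coloring of $[\kappa]^{n+1}$ by the fewer-than-$\kappa$ elements of $X_\emptyset$, so weak compactness yields $A_0\in[\kappa]^\kappa$ on which it is constant.

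The substance of the argument is the step for a fixed $i$ with $1\le i\le n$, carried out inside $A_{i-1}$. For each $\vec\alpha\in[\kappa]^i$, the slice coloring $d_{\vec\alpha}:[\kappa\setminus(\max(\vec\alpha)+1)]^{\,n+1-i}\to X_{\vec\alpha}$ given by $d_{\vec\alpha}(\vec\delta)=c(\vec\alpha\,{}^\frown\vec\delta)(i)$ has fewer than $\kappa$ colors, and for any fixed value of $\max(\vec\alpha)$ there are fewer than $\kappa$ such $\vec\alpha$, all with a common domain; since $\kappa$ is inaccessible, the product of these slice colorings again has fewer than $\kappa$ colors, so weak compactness provides a single set of size $\kappa$ simultaneously homogeneous for all of them. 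The goal is to amalgamate these various homogeneous sets into one unbounded $A_i$ that, for every $\vec\alpha\in[A_i]^i$, is contained in a $d_{\vec\alpha}$-homogeneous set lying above $\max(\vec\alpha)$ — which is exactly the conclusion for coordinate $i$. I would do this by a recursion of length $\kappa$ that builds $A_i$ together with a coherent system of designated colors $r_{\vec\alpha}\in X_{\vec\alpha}$ ($\vec\alpha\in[A_i]^i$): the realizable partial runs of this recursion form, under end-extension, a tree whose levels have size $<\kappa$ (again by inaccessibility), and a cofinal branch — which exists by the tree property, equivalently by the weak compactness of $\kappa$ — determines $A_i$.

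The step I expect to be the main obstacle is precisely this amalgamation, and within it the limit stages. One cannot simply take a diagonal intersection of the homogeneous sets $H_{\vec\alpha}$: because $\kappa$ is not measurable these sets cannot be forced to lie in any fixed $\kappa$-complete normal filter, so a diagonal intersection can be bounded and the naive recursion can die at a limit stage. The work is therefore in choosing the designated colors $r_{\vec\alpha}$ with enough foresight that the partial commitments always extend — i.e.\ that the tree of realizable partial runs genuinely has height $\kappa$ — and in verifying that a cofinal branch yields an \emph{unbounded} $A_i$ rather than a bounded trace. This is where the full strength of weak compactness (in its tree-property, or reflection, guise) is deployed, and constitutes the delicate core of the proof; the coordinate-by-coordinate bookkeeping and the reductions above are routine by comparison.
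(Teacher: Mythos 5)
Your two reductions are sound: composing with injections into ordinals is harmless, and the conclusion for a single coordinate $i$ is indeed closed under passing to subsets of $A$, so a nested chain $A_0\supseteq\cdots\supseteq A_n$ is a valid decomposition. The $i=0$ case is also handled correctly. But the central step --- the amalgamation at a fixed $i\ge 1$ --- is precisely what you leave unspecified, and as you yourself observe, it is where all the work lives. You name the right tool (a tree of ``realizable partial runs'' plus the tree property), but you do not say what a node of the tree is, why every level is nonempty (i.e.\ why the tree has height $\kappa$), or why a cofinal branch produces an \emph{unbounded} set rather than a bounded trace. These are not bookkeeping details; they are the proof. Moreover, your choice to process coordinates in \emph{increasing} order makes each such step strictly harder than it needs to be: for $i<n$ the slice coloring $d_{\vec\alpha}$ is a coloring of $(n+1-i)$-tuples, so ``homogenize all $d_{\vec\alpha}$ simultaneously'' is itself a full end-homogeneity problem in dimension $n+1-i$, which would require its own inner induction --- a recursion you have collapsed into the phrase ``with enough foresight.''

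The paper's proof avoids this by inducting on $n$ and, in the inductive step, killing only the \emph{last input slot} in one ramification. Concretely, it defines a tree ordering $(\kappa,<_c)$ (the classical Erd\H{o}s--Tarski ramification of \cite[Lemma 7.2]{kanamori}) in which $\gamma<_c\delta$ implies $\gamma<\delta$ and $c(\langle\alpha_0,\dots,\alpha_m,\gamma\rangle)=c(\langle\alpha_0,\dots,\alpha_m,\delta\rangle)$ whenever $\alpha_0<_c\cdots<_c\alpha_m<_c\gamma<_c\delta$; inaccessibility makes this a $\kappa$-tree, and the tree property gives an unbounded $<_c$-chain $A_0$ (unboundedness is automatic because $<_c$ refines $<$). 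On $A_0$ the value $c(\vec\alpha{}^\frown\langle\gamma\rangle)$ is independent of the top element $\gamma$, so $d(\vec\alpha)=c(\vec\alpha{}^\frown\langle\gamma\rangle)\restriction n$ is a well-defined coloring of $[A_0]^n$ to which the inductive hypothesis applies. Each ramification here is one-dimensional, the ``designated colors'' are read off directly from $c$ (no foresight required), and the unboundedness of the branch is immediate. You should either adopt this ordering of the recursion or supply a genuine construction of your tree of partial runs; as written, the proposal identifies the right obstacle and the right tool but does not overcome the obstacle.
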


\begin{proof}
  The proof is a relatively straightforward modification of the
  classical ramification argument used to prove that $\kappa$ satisfies
  $\kappa \rightarrow (\kappa)^n_\theta$ for all $n < \omega$ and
  $\theta < \kappa$. We provide some details for completeness.

  We proceed by induction on $n$. If $n = 0$, then the result follows simply
  from the fact that $\kappa$ is regular and $|X_\emptyset| < \kappa$.
  Thus, suppose $n = m + 1$ and we have established all instances of the
  result for $m$. By standard arguments (cf.\ \cite[Lemma 7.2]{kanamori}),
  using the strong inaccessibility of $\kappa$ and the fact that
  $|X_{\vec{\alpha}}| < \kappa$ for all $\vec{\alpha} \in [\kappa]^{\leq n}$,
  we can define a tree ordering $(\kappa, <_c)$ such that
  \begin{enumerate}
    \item for all $\alpha < \beta < \kappa$, if $\alpha <_c \beta$, then
    $\alpha < \beta$;
    \item for all $\alpha_0 <_c \alpha_1 <_c \ldots <_c \alpha_m <_c
    \gamma <_c \delta$, we have $c(\langle \alpha_0, \ldots, \alpha_m,
    \gamma \rangle) = c(\langle \alpha_0, \ldots, \alpha_m, \delta \rangle)$;
    \item $(\kappa, <_c)$ is a $\kappa$-tree, i.e., it has height $\kappa$
    and every level has cardinality less than $\kappa$.
  \end{enumerate}
  Since $\kappa$ is weakly compact and therefore has the tree property, we can
  fix an unbounded $A_0 \subseteq \kappa$ such that $A_0$ is linearly ordered by
  $<_c$. Now define a function $d$ with domain $[A_0]^n$ as follows.
  For every $\vec{\alpha} \in [A_0]^n$, let
  $d(\vec{\alpha}) = c(\vec{\alpha} ^\frown \langle \gamma \rangle) \restriction
  n$ for some $\gamma \in A_0 \setminus (\vec{\alpha}(m) + 1)$. Note that,
  by property (2) of $(\kappa, <_c)$ and the fact that $A_0$ is linearly ordered
  by $c$, the value of $d(\vec{\alpha})$ is independent of our choice of
  $\gamma$.

  Apply the inductive hypothesis to find an unbounded $A \subseteq A_0$ such that,
  for all $i \leq m$, all $\vec{\eta} \in [A]^i$, and all $\vec{\alpha}$,
  $\vec{\beta} \in [A]^n$ end-extending $\vec{\eta}$, we have $d(\vec{\alpha})(i) =
  d(\vec{\beta})(i)$. We claim that $A$ is as desired. To this end, fix
  $i \leq n$, $\vec{\alpha} \in [A]^i$, and $\vec{\gamma}, \vec{\delta}
  \in [A]^{n+1}$ end-extending $\vec{\alpha}$. If $i = n$, then the fact that $A$
  is linearly ordered by $<_c$ implies that
  $c(\vec{\gamma}) = c(\vec{\delta})$, so, in particular, $c(\vec{\gamma})(i) =
  c(\vec{\delta})(i)$. If $i < n$, then, by our definition of $d$ and our choice of $A$, we have
  \[
    c(\vec{\gamma})(i) = d(\vec{\gamma} \restriction n)(i) =
    d(\vec{\delta} \restriction n)(i) = c(\vec{\delta})(i),
  \]
  as required.
\end{proof}

\begin{lemma} \label{uniformizing_lemma}
  Let $n$ be a positive integer and let $\kappa$ be a weakly compact cardinal.
  Let $\left\langle p_{\vec{\alpha}}~\middle|~\vec{\alpha} \in [\kappa]^n\right\rangle$ be a family of
  conditions in $\bb{P}$, a length-$\kappa$ finite-support
  iteration of Hechler forcings. Let $u_{\vec{\alpha}} = \dom(p_{\vec{\alpha}})$.
  Then there is an unbounded set $A \subseteq \kappa$, a family
  $\langle u_{\vec{\alpha}} \mid \vec{\alpha} \in [A]^{<n}
  \rangle$, a natural number $\ell$, and a set of stems $\langle s_i \mid i < \ell \rangle$
  such that
  \begin{enumerate}
    \item $|u_{\vec{\alpha}}| =
    \ell$ for all $\vec{\alpha} \in [A]^n$, and if $\eta$ is the $i^{\mathrm{th}}$ element of
    $u_{\vec{\alpha}}$ then $s_{p_{\vec{\alpha}}(\eta)} = s_i$,
    \item $A$ and $\langle u_{\vec{\alpha}} \mid \vec{\alpha} \in [A]^{\leq n} \rangle$
    satisfy the conclusions of Lemma \ref{strong_delta_system_lemma},
    \item $p_{\vec{\beta}} \restriction u_{\vec{\alpha}}
    = p_{\vec{\gamma}} \restriction u_{\vec{\alpha}}$ for all $\vec{\alpha} \in [A]^{<n}$ and $\vec{\beta}, \vec{\gamma} \in
    [A]^n$ such that $\vec{\alpha} \sqsubseteq \vec{\beta}$ and $\vec{\alpha}
    \sqsubseteq \vec{\gamma}$.
  \end{enumerate}
\end{lemma}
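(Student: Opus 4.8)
The plan is to first extract, from the domains of the $p_{\vec{\alpha}}$, the $\Delta$-system structure furnished by Lemma \ref{strong_delta_system_lemma}, and then to run a single ramification argument — Lemma \ref{weakly_compact_lemma} — homogenizing the stems and the relevant restrictions of the conditions all at once. First I would apply Lemma \ref{strong_delta_system_lemma} to the family $\left\langle \dom(p_{\vec{\alpha}}) ~\middle|~ \vec{\alpha}\in[\kappa]^n\right\rangle$, obtaining a set $A_1\in[\kappa]^\kappa$, a family $\left\langle u_{\vec{\alpha}} ~\middle|~ \vec{\alpha}\in[A_1]^{<n}\right\rangle$ of elements of $[\kappa]^{<\omega}$ (adopting $u_{\vec{\alpha}}=\dom(p_{\vec{\alpha}})$ for $\vec{\alpha}\in[A_1]^n$) meeting the conclusions of that lemma, and a natural number $\ell$ with $|u_{\vec{\alpha}}|=\ell$ for every $\vec{\alpha}\in[A_1]^n$. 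The conclusions of Lemma \ref{strong_delta_system_lemma} are inherited by any unbounded $A\subseteq A_1$ together with the restriction of the family of $u_{\vec{\alpha}}$ to $[A]^{\leq n}$ — a subfamily of a $\Delta$-system is again a $\Delta$-system with the same root, alignment is downward absolute, and so on — so item (2) of the present lemma, along with the clause $|u_{\vec{\alpha}}|=\ell$ of item (1), holds automatically for whatever such $A$ we produce. It remains to secure the statement about stems in item (1) and the statement in item (3).

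For this I would invoke Lemma \ref{weakly_compact_lemma} with its parameter ``$n$'' set equal to $n-1$, relativized to $A_1$; such a relativization is harmless, since $A_1$ has order type $\kappa$, so one may pull the relevant coloring back along the increasing enumeration of $A_1$, apply Lemma \ref{weakly_compact_lemma} in the form stated, and then push the resulting homogeneous set forward. Since $\kappa$ is weakly compact and hence inaccessible, and $\bb{P}$ is a finite-support iteration of Hechler forcings, $|\bb{P}_\gamma|<\kappa$ for all $\gamma<\kappa$; thus, for $\vec{\alpha}\in[A_1]^{\leq n-1}$, the set $X_{\vec{\alpha}}=\{q\in\bb{P}\mid\dom(q)\subseteq u_{\vec{\alpha}}\}$ has cardinality less than $\kappa$, and we may harmlessly replace $X_\emptyset$ by $X_\emptyset\times({}^{<\omega}\omega)^\ell$. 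For $\vec{\beta}\in[A_1]^n$ and $i\leq n-1$, the clause of Lemma \ref{strong_delta_system_lemma} asserting that $u_{\vec{\gamma}}\sqsubseteq u_{\vec{\delta}}$ whenever $\vec{\gamma}\sqsubseteq\vec{\delta}$ gives $u_{\vec{\beta}\restriction i}\sqsubseteq u_{\vec{\beta}}=\dom(p_{\vec{\beta}})$, so $p_{\vec{\beta}}\restriction u_{\vec{\beta}\restriction i}$ is an unambiguously defined element of $X_{\vec{\beta}\restriction i}$. I then define $c$ on $[A_1]^n$ by
\[
  c(\vec{\beta})(i)=\begin{cases}\big(p_{\vec{\beta}}\restriction u_\emptyset,~\langle s_{p_{\vec{\beta}}(u_{\vec{\beta}}(j))} \mid j<\ell\rangle\big)&\text{if }i=0,\\[2pt] p_{\vec{\beta}}\restriction u_{\vec{\beta}\restriction i}&\text{if }1\leq i\leq n-1,\end{cases}
\]
noting that in the dense subset of $\bb{P}$ to which we have restricted, each $s_{p_{\vec{\beta}}(u_{\vec{\beta}}(j))}$ is a genuine element of ${}^{<\omega}\omega$.

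Lemma \ref{weakly_compact_lemma} then yields an unbounded $A\subseteq A_1$ such that $c(\vec{\gamma})(i)=c(\vec{\delta})(i)$ whenever $i\leq n-1$, $\vec{\alpha}\in[A]^i$, and $\vec{\gamma},\vec{\delta}\in[A]^n$ both end-extend $\vec{\alpha}$. Since every element of $[A]^n$ end-extends $\emptyset$, the $i=0$ coordinate produces a single sequence $\left\langle s_j ~\middle|~ j<\ell\right\rangle$ of stems with $s_{p_{\vec{\alpha}}(u_{\vec{\alpha}}(j))}=s_j$ for all $\vec{\alpha}\in[A]^n$ and $j<\ell$ — this is item (1) — and also gives $p_{\vec{\gamma}}\restriction u_\emptyset=p_{\vec{\delta}}\restriction u_\emptyset$ for all $\vec{\gamma},\vec{\delta}\in[A]^n$, the $\vec{\alpha}=\emptyset$ instance of item (3); the coordinates $1\leq i\leq n-1$ supply the remaining instances, since $[A]^{<n}=\bigcup_{i<n}[A]^i$. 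I do not anticipate a genuine obstacle: the argument is essentially bookkeeping on top of Lemmas \ref{strong_delta_system_lemma} and \ref{weakly_compact_lemma}. The one point demanding care is the organization of the coloring $c$ so that the stem data — which must be made uniform across \emph{all} of $[A]^n$ simultaneously — is recorded in the $0^{\mathrm{th}}$ coordinate, the unique coordinate on which Lemma \ref{weakly_compact_lemma} yields a fully uniform conclusion, while the higher coordinates carry the strictly weaker demands of item (3); secondary points are the verification that each $p_{\vec{\beta}}\restriction u_{\vec{\alpha}}$ is well defined (handled by the $\sqsubseteq$-monotonicity clause of Lemma \ref{strong_delta_system_lemma}) and the observation that $|\bb{P}_\gamma|<\kappa$ for $\gamma<\kappa$.
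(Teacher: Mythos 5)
Your proof is correct and follows essentially the same strategy as the paper's: apply Lemma \ref{strong_delta_system_lemma} to the domains, then Lemma \ref{weakly_compact_lemma} to homogenize the restrictions $p_{\vec{\beta}}\restriction u_{\vec{\beta}\restriction i}$. The only difference is cosmetic — the paper makes a separate preliminary application of weak compactness to fix the stems (and domain sizes) before invoking the $\Delta$-system lemma, whereas you fold the stem data into the $i=0$ coordinate of the final coloring; your observation that the conclusions of Lemma \ref{strong_delta_system_lemma} pass to unbounded subsets is correct and makes this consolidation go through.
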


\begin{proof}
  First, apply the weak compactness of $\kappa$ to the function on
  $[\kappa]^n$ defined by
  \[
    \vec{\alpha} \mapsto \langle s_{p_{\vec{\alpha}}(\eta)} \mid \eta \in
    \dom(p_{\vec{\alpha}}) \rangle
  \]
  to find an unbounded set $A_0 \subseteq \kappa$, a natural number
  $\ell < \omega$, and a sequence
  $\langle s_i \mid i < \ell \rangle$ of stems such that, for all $\vec{\alpha}
  \in A_0$, we have $|\dom(p_{\vec{\alpha}})| = \ell$ and
  \[
    \langle s_{p_{\vec{\alpha}}(\eta)} \mid \eta \in \dom(p_{\vec{\alpha}}) \rangle
    = \langle s_i \mid i < \ell \rangle.
  \]

  Next, apply Lemma \ref{strong_delta_system_lemma} to $\langle u_{\vec{\alpha}}
  \mid \vec{\alpha} \in [A_0]^n \rangle$ to find an unbounded set
  $A_1 \subseteq A_0$ and sets $\langle u_{\vec{\alpha}} \mid
  \vec{\alpha} \in [A_1]^{<n} \rangle$ as in the conclusion of that lemma.

  For each $\vec{\alpha} \in [A_1]^{<n}$ let $X_{\vec{\alpha}} =
  \{p \in \bb{P} \mid \dom(p) = u_{\vec{\alpha}}\}$ and consider the function $c$
  on $[A_1]^n$ defined as follows. For every $\vec{\beta} \in [A_1]^n$,
  define $c(\vec{\beta}) \in \prod_{i < n} X_{\vec{\beta} \restriction i}$
  by letting $c(\vec{\beta})(i) = p_{\vec{\beta}} \restriction u_{\vec{\beta}
  \restriction i}$ for all $i < n$.

  Apply Lemma~\ref{weakly_compact_lemma}
  to find an unbounded $A \subseteq A_1$ such that, for all
  $i < n$, all $\vec{\alpha} \in [A]^i$, and all $\vec{\beta}, \vec{\gamma}
  \in [A]^n$ end-extending $\vec{\alpha}$, we have
  $c(\vec{\beta})(i) = c(\vec{\gamma})(i)$, i.e., $p_{\vec{\beta}} \restriction
  u_{\vec{\alpha}} = p_{\vec{\gamma}} \restriction u_{\vec{\alpha}}$. Then
  $A$, $\langle u_{\vec{\alpha}} \mid \vec{\alpha} \in [A]^{\leq n} \rangle$,
  $\ell$, and $\langle s_i \mid i < \ell \rangle$ are as desired.
\end{proof}

\begin{remark}
  Suppose that $A$, $\langle u_{\vec{\alpha}}
  \mid \vec{\alpha} \in [A]^{\leq n} \rangle$, $\ell$, and $\langle s_i \mid i < \ell
  \rangle$ are as in the conclusion of Lemma \ref{uniformizing_lemma}.
  Given $\vec{\alpha} \in [A]^{<n}$, define $p_{\vec{\alpha}} \in \bb{P}$ by
  choosing any $\vec{\beta} \in [A]^n$ with $\vec{\alpha} \sqsubseteq
  \vec{\beta}$ and letting $p_{\vec{\alpha}} = p_{\vec{\beta}} \restriction
  u_{\vec{\alpha}}$ (observe that this is independent of our choice of $\vec{\beta}$).
  The family $\left\langle p_{\vec{\alpha}}~\middle|~\vec{\alpha}\in [A]^{\leq n}\right\rangle$ then has the property that if $m\leq n$ and $\vec{\alpha}, \vec{\beta} \in [A]^m$ are aligned, then $p_{\vec{\alpha}}$
  and $p_{\vec{\beta}}$ are compatible in $\bb{P}$.
\end{remark}

We turn now to the first of our results on the vanishing of $\lim^n\mathbf{A}$. In
this case, the assumption of the existence of a weakly compact cardinal is
certainly more than we need.
For example, it is shown in \cite{kamo} that $\lim^1 \mathbf{A} = 0$ in any forcing
extension by $\mathrm{Add}(\omega, \omega_2)$, the poset to add $\aleph_2$-many
Cohen reals. A similar argument shows that $\lim^1 \mathbf{A} = 0$ in any forcing
extension by a length-$\omega_2$ finite support iteration of Hechler forcings, at
least as long as the ground model satisfies $\mathsf{CH}$.
This is to say that our main purpose in recording this theorem and proof is to foreground the essential logic of the more complicated arguments that will follow.

\begin{theorem} \label{hechler_theorem_one}
  Suppose that $\kappa$ is a weakly compact cardinal, and let $\bb{P}$ be a
  finite-support iteration of Hechler forcings of length $\kappa$. Then
  $V^{\bb{P}}\vDash\textnormal{``}\lim^1 \mathbf{A} = 0.\textnormal{''}$
\end{theorem}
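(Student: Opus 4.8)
The plan is to verify the reformulation supplied by Theorem \ref{limnA}: that in $V^{\mathbb{P}}$ every coherent family $\Phi=\left\langle\varphi_f:I(f)\rightarrow\mathbb{Z}\mid f\in{}^\omega\omega\right\rangle$ is trivial. Fix such a $\Phi$, together with a condition $p^\ast$ in the generic filter $G$ forcing its coherence, and for each $\alpha<\kappa$ let $d_\alpha$ denote the Hechler generic real adjoined at stage $\alpha$ of the iteration. Since $\mathbb{P}$ is c.c.c.\ and $\kappa$ is regular uncountable, every real of $V^{\mathbb{P}}$ has a nice name determined by countably many conditions, hence lies in some $V^{\mathbb{P}_\alpha}$ with $\alpha<\kappa$ and is there $\leq^*$-dominated by $d_\alpha$; consequently $\{d_\alpha\mid\alpha\in A\}$ is $\leq^*$-cofinal in ${}^\omega\omega$ for every unbounded $A\subseteq\kappa$. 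By Lemma \ref{coftriv} it therefore suffices to trivialize $\Phi\restriction F$ for $F=\{d_\alpha\mid\alpha\in A\}$ with $A$ unbounded, and by Lemma \ref{finsup} (in the case $n=1$) it suffices in turn to produce finitely supported functions $\psi_{d_\alpha}:I(d_\alpha)\rightarrow\mathbb{Z}$ $(\alpha\in A)$ with
\[
  \psi_{d_\beta}-\psi_{d_\alpha}=\varphi_{d_\beta}-\varphi_{d_\alpha}\quad\text{on }I(d_\alpha\wedge d_\beta)
\]
for all $\alpha<\beta$ in $A$. By coherence the right-hand sides $e_{\alpha\beta}:=(\varphi_{d_\beta}-\varphi_{d_\alpha})\restriction I(d_\alpha\wedge d_\beta)$ are finitely supported and satisfy the cocycle identity $e_{\alpha\beta}+e_{\beta\gamma}=e_{\alpha\gamma}$ on common domains; the task is to exhibit this cocycle as a coboundary of finitely supported $0$-cochains.

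Next I would descend to $V$ to set up the combinatorics. For each $\alpha<\kappa$, using that $\dot d_\alpha$ is a $\mathbb{P}_{\alpha+1}$-name and that $\mathbb{P}$ is c.c.c., fix a condition $p_\alpha\in G$ and an ordinal $\gamma_\alpha\in(\alpha,\kappa)$ such that $p_\alpha$ decides a $\mathbb{P}_{\gamma_\alpha}$-name for $\varphi_{\dot d_\alpha}$ modulo a finite set whose (finitely many) coordinates and values $p_\alpha$ also decides. After thinning to an unbounded subset of $\kappa$ we may assume $\gamma_\alpha<\beta$ whenever $\alpha<\beta$, so that for $\alpha<\beta$ all of $d_\alpha$, $\varphi_{d_\alpha}$, $d_\alpha\wedge d_\beta$, and the finite object $e_{\alpha\beta}$ are computed and decided by conditions of $G$ lying in $\mathbb{P}_\beta$. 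Now apply Lemma \ref{uniformizing_lemma} with $n=1$ to $\left\langle p_\alpha\mid\alpha<\kappa\right\rangle$: this yields an unbounded $A\subseteq\kappa$, a length $\ell$, stems $\langle s_i\mid i<\ell\rangle$, and an auxiliary set $u_\emptyset$ such that the domains $\dom(p_\alpha)$ form a $\Delta$-system with root $u_\emptyset$, each $p_\alpha$ has stem-pattern $\langle s_i\mid i<\ell\rangle$, and $p_\alpha\restriction u_\emptyset$ is the \emph{same} condition $p_\emptyset$ for every $\alpha\in A$; in particular $\{p_\alpha\mid\alpha\in A\}$ is pairwise compatible (via Lemma \ref{compatibility_lemma}) and the conditions agree on their common ``spine'' $p_\emptyset$ while having otherwise disjoint domains.

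Finally — and this is where the argument genuinely does its work — I would use this uniformity, together with the genericity of $G$, to define the corrections $\psi_{d_\alpha}$ out of differences of the functions $\varphi$: roughly, $\psi_{d_\alpha}$ is to record the discrepancy between $\varphi_{d_\alpha}$ and the ``canonical'' value already pinned down by the spine $p_\emptyset$, assembled from finitely many $\varphi_{d_\gamma}$ with $\gamma\leq\alpha$ selected with the aid of the $\Delta$-system and alignment structure, and one then verifies the displayed coboundary equation by a density argument below $p_\emptyset$ exploiting the mutual independence of the $\alpha$-specific coordinates. I expect the main obstacle to be twofold. First, the construction must be arranged so that the coboundary equation holds \emph{exactly}, not merely modulo finite: it is precisely this exactness that makes Lemma \ref{finsup} applicable and that forces a genuine combinatorial argument rather than a soft mod-finite counting argument (a single fixed ``base point'' $\alpha_0$ fails, since the finite support of $e_{\alpha\beta}$ need not lie below the graph of $d_{\alpha_0}$). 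Second, one must check that the finitely-many-coordinate data decided by the various $p_\alpha$, once uniformized, are mutually consistent across all pairs $\alpha<\beta$ in $A$ — and this consistency is exactly what the $\Delta$-system and alignment conclusions of Lemmas \ref{strong_delta_system_lemma} and \ref{uniformizing_lemma} are designed to deliver. The $n=1$ case then serves as a template: the same three-phase strategy — reduce via Theorem \ref{limnA} and Lemmas \ref{coftriv} and \ref{finsup}; reflect and uniformize the relevant conditions of $G$ through the multidimensional $\Delta$-system lemma; build the trivializing correction from differences of the $\varphi$'s and check it — recurs for $n=2$ and in general, with the bookkeeping of the final phase growing correspondingly more elaborate.
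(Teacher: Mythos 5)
Your opening reductions (reduce via Theorem~\ref{limnA}, Lemma~\ref{coftriv}, and Lemma~\ref{finsup} to trivializing $\Phi\restriction F$ for $F$ a $\leq^*$-cofinal family of Hechler generics) are sound, and the sentence about the $\Delta$-system and alignment structure ``delivering consistency across pairs'' points in the right direction. But the middle of the argument, which you yourself flag as where ``the argument genuinely does its work,'' is left as an intention rather than a construction, and as set up it cannot be completed, for a concrete reason: you apply Lemma~\ref{uniformizing_lemma} at the wrong dimension.

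You choose one condition $p_\alpha$ per ordinal $\alpha$ and apply Lemma~\ref{uniformizing_lemma} with $n=1$. This yields a $\Delta$-system $\langle u_{\langle\alpha\rangle}\rangle$ with a common root $u_\emptyset$, constant stems, and pairwise compatibility. That is not the right object, because coherence of $\Phi$ is a \emph{binary} relation: the finite datum to be uniformized is, for each \emph{pair} $\alpha<\beta$, an integer $k_{\alpha,\beta}$ such that (on a suitable extension $q_{\alpha,\beta}$ of $p_0$ forcing $\dot g_\alpha\leq\dot g_\beta$) we have $\dot\varphi_\alpha(k,m)=\dot\varphi_\beta(k,m)$ for all $k\geq k_{\alpha,\beta}$ and $m\leq\dot g_\alpha(k)$. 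The paper first thins so that $k_{\alpha,\beta}$ is a fixed $k^*$, then applies Lemma~\ref{uniformizing_lemma} to the pair-indexed family $\langle q_{\alpha,\beta}\mid\langle\alpha,\beta\rangle\in[A_1]^2\rangle$ (that is, with $n=2$), obtaining not just a $\Delta$-system of singleton-indexed restrictions but a two-level hierarchy $q_\emptyset\sqsubseteq q_\alpha\sqsubseteq q_{\alpha,\beta}$. This hierarchy is exactly what is needed for the ``linking'' step (Claim~\ref{linking_claim_1}): given $\alpha<\beta$ with $q_\alpha,q_\beta\in\dot G$, the disjointness (over $\gamma$) of the tails $u_{\langle\alpha,\gamma\rangle}\setminus u_{\langle\alpha\rangle}$ and the alignment of $u_{\langle\alpha,\gamma\rangle}$ with $u_{\langle\beta,\gamma\rangle}$ let one find, densely, a single $\gamma>\beta$ with both $q_{\alpha,\gamma}$ and $q_{\beta,\gamma}$ in $\dot G$. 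That produces a common dominating $g_\gamma$ and forces $\varphi_\alpha,\varphi_\beta$ to agree with $\varphi_\gamma$ (hence with each other) past $k^*$ on $I(g_\alpha\wedge g_\beta)$. With singleton-indexed conditions there is no $k^*$ and no linking mechanism, so the proposed ``discrepancy from the spine'' construction has nothing to latch onto: a common stem $p_\emptyset$ does not pin down any canonical value of the $\dot\varphi$'s, and your stated worry that a fixed base point fails is exactly the obstruction that pair-level uniformization resolves.

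Two smaller points. First, choosing $p_\alpha\in G$ for each $\alpha$ and then ``descending to $V$'' is backwards from what is wanted: the paper chooses the $q_{\alpha,\beta}$ below a fixed $p_0$ entirely in $V$, then produces a single $q\leq p_0$ and lets genericity of $\dot G$ produce the set $\dot B$ of $\alpha$ with $q_\alpha\in\dot G$. This is cleaner and avoids any issues about which objects live in which model. Second, routing the $n=1$ case through Lemma~\ref{finsup} (exhibiting a cocycle as a coboundary of finitely supported $0$-cochains) is a detour; once one has Claim~\ref{claim_48} (mod-$k^*$ agreement of $\varphi_\alpha$ and $\varphi_\beta$ for all $\alpha,\beta\in B$), the trivializing $\psi:\omega^2\to\bb Z$ can be written down directly by setting $\psi(k,m)=\varphi_\alpha(k,m)$ for any $\alpha\in B$ with $m\leq g_\alpha(k)$, with well-definedness for $k\geq k^*$ coming for free. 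The machinery of Lemma~\ref{finsup} is really tailored to $n\geq 2$, where there is no single function to write down.
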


\begin{proof}
  For $\alpha < \kappa$, let $\dot{g}_\alpha$ be a $\bb{P}$-name for the
  $\alpha^{\mathrm{th}}$ Hechler real added by $\bb{P}$. In $V^{\bb{P}}$,
  $\langle \dot{g}_\alpha \mid \alpha < \kappa \rangle$ is a $<^*$-increasing,
  cofinal sequence in ${^\omega}\omega$, hence it suffices to show that, in $V^{\bb{P}}$,
  every coherent family indexed by $\langle \dot{g}_\alpha \mid \alpha < \kappa \rangle$
  is trivial. To this end, fix a condition $p_0 \in \bb{P}$ and a sequence of
  $\bb{P}$-names $\dot{\Phi} = \left\langle \dot{\varphi}_\alpha : I(\dot{g}_\alpha) \rightarrow
  \bb{Z} ~ \middle| ~ \alpha < \kappa \right\rangle$ forced by $p_0$ to be a coherent family.
  We will find a $q \leq p_0$ forcing $\dot{\Phi}$ to be trivial.

  Let $A_0 = \kappa \setminus (\max(\dom(p_0)) + 1)$. Observe that for any
  $\alpha < \beta$ in $A_0$ there exists a $p \leq p_0$ such that
  $p \Vdash_{\bb{P}}``\dot{g_\alpha} \leq \dot{g_\beta}"$. Namely, let $p = p_0 \cup \{\langle \beta, (\emptyset, \dot{g}_\alpha) \rangle\}$.
  Hence for each $\alpha < \beta$ in $A_0$, since $p_0$ forces that
  $\dot{\Phi}$ is coherent, we may extend $p_0$ to a $q_{\alpha, \beta}
  \leq p_0$ such that for some $k_{\alpha, \beta} <\omega$,
    \[
    q_{\alpha, \beta}
    \Vdash_{\bb{P}}``\dot{g_\alpha} \leq \dot{g_\beta},\text{ and }\dot{\varphi}_\alpha(k,m)
    = \dot{\varphi}_\beta(k,m)\text{ for all } k \geq k_{\alpha, \beta}
    \text{ and } m \leq \dot{g}_\alpha(k)."
    \]
  Thin out $A_0$, if necessary, to an unbounded $A_1$ such that
  $k_{\alpha, \beta}$ equals some fixed $k^*$ for all $\langle \alpha, \beta \rangle \in [A_1]^2$.
  Now apply Lemma \ref{uniformizing_lemma} to $\big\langle q_{\alpha, \beta}
  \mid \langle \alpha, \beta \rangle \in [A_1]^2 \big\rangle$ to find an unbounded
  $A \subseteq A_1$, $u_\emptyset$, $\langle u_{\langle \alpha \rangle} \mid
  \alpha \in A \rangle$, $\ell$, and $\langle s_i \mid i < \ell \rangle$ as
  in the statement of the lemma.

  Define conditions $q_\emptyset$ and
  $\langle q_\alpha \mid \alpha \in A \rangle$ as follows. First, let
  $\alpha < \beta$ be elements of $A$ and let
  $q_\emptyset = q_{\alpha, \beta} \restriction u_\emptyset$. By Lemma \ref{uniformizing_lemma} item (3), this definition is independent of the choice of $\alpha$ and $\beta$.
  Next, for a fixed $\alpha \in A$, choose an arbitrary $\beta \in A$
  with $\beta > \alpha$ and let $q_\alpha = q_{\alpha, \beta} \restriction
  u_{\langle \alpha \rangle}$. Here again the choice of $\beta$ is immaterial.

  Let $q = q_\emptyset$ and notice that, since $q_{\alpha, \beta} \leq p_0$ for all
  $\alpha < \beta$ in $A_0$, we also have $q \leq p_0$.
  We claim that $q$ forces $\dot{\Phi}$ to be trivial. Let $\dot{B}$
  be a $\bb{P}$-name for the set of $\alpha \in A$ such that $q_\alpha \in \dot{G}$,
  where $\dot{G}$ is the canonical $\bb{P}$-name for the $\bb{P}$-generic filter.

  \begin{claim} \label{unbounded_b_claim}
    $q \Vdash_{\bb{P}}``\dot{B} \text{ is unbounded in } \kappa"$.
  \end{claim}

  \begin{proof}
    Let $r \leq q$ and $\eta < \kappa$ be arbitrary. It suffices to find an
    $\alpha \in (A \setminus \eta)$ such that $q_\alpha$ and $r$ are compatible.
    By construction, the sequence $\left\langle u_{\langle \alpha \rangle}
    \setminus u_\emptyset ~ \middle| ~ \alpha \in A \right\rangle$ consists of pairwise
    disjoint sets, so $(u_{\langle \alpha \rangle} \setminus u_\emptyset) \,\cap\, \dom(r) = \emptyset$ for some $\alpha \in (A \setminus \eta)$.
    This implies that $q_\alpha$ is compatible with $r$, since $q_\alpha \restriction u_\emptyset
    = q_\emptyset$ and $r \leq q_\emptyset$.
  \end{proof}

  \begin{claim} \label{linking_claim_1}
    $q$ forces that, for all $\alpha < \beta$ in $\dot{B}$, there is a
    $\gamma \in (A \setminus \beta + 1)$ such that $q_{\alpha, \gamma}$
    and $q_{\beta, \gamma}$ are in $\dot{G}$.
  \end{claim}

  \begin{proof}
    Fix $r \leq q$ and $\alpha < \beta$ such that $r$ forces both $\alpha$
    and $\beta$ to be in $\dot{B}$. By the definition of $\dot{B}$, we may
    assume that $r$ extends both $q_\alpha$ and $q_\beta$. It then suffices
    to find a $\gamma \in (A \setminus \beta + 1)$ such that $r, q_{\alpha, \gamma}$,
    and $q_{\beta, \gamma}$ all have a common extension $s$. By construction, the
    families $\left\langle u_{\langle \alpha, \gamma \rangle} \setminus
    u_{\langle \alpha \rangle} ~ \middle| ~ \gamma \in (A \setminus \beta + 1) \right\rangle$
    and $\left\langle u_{\langle \beta, \gamma \rangle} \setminus u_{\langle \beta \rangle}
    ~ \middle| ~ \gamma \in (A \setminus \beta + 1) \right\rangle$ each consist of pairwise
    disjoint sets. We can therefore find a $\gamma \in (A \setminus \beta + 1)$
    such that $(u_{\langle \alpha, \gamma \rangle} \setminus
    u_{\langle \alpha \rangle})\, \cap\, \dom(r) = \emptyset$ and
    $(u_{\langle \beta, \gamma \rangle} \setminus
    u_{\langle \beta \rangle}) \,\cap\, \dom(r) = \emptyset$.
    By item (2b) of Lemma \ref{strong_delta_system_lemma}, $u_{\langle \alpha, \gamma \rangle}$
    and $u_{\langle \beta, \gamma \rangle}$ are aligned, hence by item (1) of Lemma \ref{uniformizing_lemma}, the stems in $q_{\alpha,\gamma}$ and $q_{\beta,\gamma}$ match whenever their domains intersect. Now apply Lemma \ref{compatibility_lemma} to $r$ and
    $\left\{q_{\alpha, \gamma}, q_{\beta, \gamma }\right\}$
    to find a single condition $s$ extending all three conditions.
    This is the condition that we had sought.
  \end{proof}

  \begin{claim} \label{claim_48}
    $q$ forces that, for all $\alpha < \beta$ in $\dot{B}$,
    $$\dot{\varphi}_\alpha(k,m) = \dot{\varphi}_\beta(k,m)$$
    for every $k \in \omega \setminus k^*$ and $m \leq \min(\{\dot{g}_\alpha(k), \dot{g}_\beta(k)\})$.
  \end{claim}

  \begin{proof}
    Let $G$ be $\bb{P}$-generic with $q \in G$. Work in $V[G]$. Fix
    $\alpha < \beta$ in $B$. By Claim
    \ref{linking_claim_1}, we can find $\gamma \in (A \setminus \beta + 1)$ such
    that $q_{\alpha, \gamma}$ and $q_{\beta, \gamma}$ are in $G$. This implies that
    \begin{itemize}
      \item $g_\alpha \leq g_\gamma$,
      \item $g_\beta \leq g_\gamma$, and
      \item $\varphi_\alpha(k,m) = \varphi_\gamma(k,m) = \varphi_\beta(k,m)$ for all $k \geq k^*$ and $m \leq \min\{g_\alpha(k),
      g_\beta(k)\}$.
    \end{itemize}
    The claim follows.
  \end{proof}
  Hence if $G$ is $\bb{P}$-generic with $q \in G$, then we may define a function
  $\psi : \omega \times \omega \rightarrow \omega$ trivializing $\Phi$ as follows.
  For all $(k,m) \in \omega \times \omega$, if there is an $\alpha \in B$ with
  $m \leq g_\alpha(k)$, then let $\psi(k,m) =
  \varphi_\alpha(k,m)$ for any such $\alpha$. Note that, by Claim~\ref{claim_48},
  if $k \geq k^*$, then the value of $\psi(k,m)$ is independent of our choice of
  $\alpha$. If there is no such $\alpha \in B$, let $\psi(k,m)=0$ (or any other integer). It is then straightforward to verify that $\psi$ does in fact
  trivialize $\Phi$, thus completing the proof of the theorem.
\end{proof}

\section{Hechler forcing and $\lim^2 \mathbf{A}$}\label{lim_2 section}

In this section, we prove the $n = 2$ case of our Main Theorem, which contains
most of the key ideas of the general proof but is significantly less complex.
We begin with some preliminary observations.

Suppose that $\Phi = \left\langle \varphi_{fg} : I(f \wedge g) \rightarrow \bb{Z}
~ \middle| ~ f,g \in {^\omega}\omega \right\rangle$ is a $2$-coherent family of functions.
Recall that $\Phi$ determines a function $\partial^1\Phi$ taking each $(f,g,h)\in ({^\omega}\omega)^3$ to the function
\[
  \varphi_{gh} - \varphi_{fh} + \varphi_{fg} : I(f \wedge g \wedge h)
  \rightarrow \bb{Z}
\]
For simplicity, in what follows we write $e$ for the difference-function $\partial^n\Phi$ whenever $\Phi$ and $n$ are clear. Observe that when $\Phi$ is 2-coherent, every $e(f,g,h)$ is finitely supported; in this case, we write $\mathtt{e}(f,g,h)$ for the restriction of $e(f,g,h)$ to its support.

Recall also that, by the lemmas in Section \ref{set_background}, $\Phi$ is trivial
if and only if there exists a
$\leq^*$-cofinal family $F \subseteq {^\omega}\omega$ and an alternating family
\[
\left\langle \psi_{fg}:I(f \wedge g) \rightarrow \bb{Z}
~ \middle| ~ f,g \in F \right\rangle
\]
of finitely supported functions such that
for all $f,g,h \in F$
\[
  e(f,g,h) = (\psi_{gh} - \psi_{fh} + \psi_{fg})\restriction I(f\wedge g\wedge h).
\]
When the functions in question are from an enumerated sequence $\langle f_\alpha \mid \alpha < \kappa \rangle$ in ${^\omega}\omega$, we will often write $e(\alpha, \beta, \gamma)$ in place of $e(f_\alpha, f_\beta, f_\gamma)$, and
$\varphi_{\alpha \beta}$ in place of $\varphi_{f_\alpha f_\beta}$,
and so on. In such cases, due to the fact that all of our families of functions
will be alternating, it will suffice to deal only with functions
$\varphi_{\alpha \beta}$ for $\alpha < \beta$ and $e(\alpha, \beta, \gamma)$
for $\alpha < \beta < \gamma$, since these determine the rest of the functions
in the relevant families. A similar statement will hold in the general case
in Section \ref{lim_n section}.

\begin{theorem} \label{hechler_theorem_two}
  Suppose that $\kappa$ is a weakly compact cardinal, and let $\bb{P}$ be a
  finite-support iteration of Hechler forcing of length $\kappa$. Then $V^{\bb{P}}\vDash\textnormal{``}\lim^2 \mathbf{A} = 0.\textnormal{''}$
\end{theorem}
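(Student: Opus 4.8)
The plan is to mimic the structure of the proof of Theorem \ref{hechler_theorem_one} but one dimension higher, using the full force of the multidimensional $\Delta$-system machinery. First I would let $\langle \dot g_\alpha \mid \alpha < \kappa\rangle$ name the Hechler reals, note that this is a $\leq^*$-cofinal family in $V^{\bb P}$, and reduce (via Lemma \ref{coftriv}) to trivializing an arbitrary $2$-coherent family $\dot\Phi = \langle \dot\varphi_{\alpha\beta} \mid \alpha<\beta<\kappa\rangle$ indexed by these reals. Fix $p_0$ forcing $\dot\Phi$ to be $2$-coherent. For each triple $\alpha<\beta<\gamma$ in $\kappa \setminus (\max(\dom(p_0))+1)$, extend $p_0$ to a condition $q_{\alpha\beta\gamma}$ that forces $\dot g_\alpha \leq \dot g_\gamma$ and $\dot g_\beta \leq \dot g_\gamma$ (using the same trick of appending $\dot g_\alpha$, $\dot g_\beta$ at coordinates $\beta$, $\gamma$) and that decides the finitely-supported difference function $\mathtt{e}(\alpha,\beta,\gamma)$ — both the support and the values there. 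Using weak compactness, thin out to an unbounded set on which the support (as a finite subset of $\omega^2$) of $\mathtt e(\alpha,\beta,\gamma)$ and the values of $e$ on it depend only on the order type data of $\langle\alpha,\beta,\gamma\rangle$ in a way compatible with alignment; then apply Lemma \ref{uniformizing_lemma} to $\langle q_{\alpha\beta\gamma} \mid \langle\alpha,\beta,\gamma\rangle\in[A_1]^3\rangle$ to obtain $A$, a coherent system $\langle u_{\vec\alpha} \mid \vec\alpha \in [A]^{\leq 3}\rangle$, stems $\langle s_i \mid i < \ell\rangle$, and the restriction-agreement property (3).

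Next I would extract, for $\vec\alpha \in [A]^{<3}$, the ``initial segment'' conditions $q_{\vec\alpha} = q_{\vec\beta}\restriction u_{\vec\alpha}$ (independent of the choice of $\vec\beta \sqsupseteq \vec\alpha$), set $q = q_\emptyset \le p_0$, and let $\dot B$ name the set of $\alpha\in A$ with $q_{\langle\alpha\rangle}\in\dot G$. As in Claims \ref{unbounded_b_claim} and \ref{linking_claim_1}, using the pairwise-disjointness of the sets $u_{\langle\alpha\rangle}\setminus u_\emptyset$ and $u_{\langle\alpha,\beta\rangle}\setminus u_{\langle\alpha\rangle}$ together with Lemma \ref{compatibility_lemma} and the alignment/stem-matching furnished by Lemmas \ref{strong_delta_system_lemma} and \ref{uniformizing_lemma}, one shows that $q$ forces $\dot B$ to be unbounded and that for any $\alpha<\beta<\gamma$ in $\dot B$ there is a $\delta\in A$ above $\gamma$ with $q_{\alpha\beta\delta}, q_{\alpha\gamma\delta}, q_{\beta\gamma\delta}$ all in $\dot G$ — this is the three-element linking lemma, and it forces the cocycle identity $e(\alpha,\beta,\gamma) = e(\alpha,\beta,\delta) - e(\alpha,\gamma,\delta) + e(\beta,\gamma,\delta)$ for indices in $\dot B$, on the common domain, modulo finite.

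Then, working in $V[G]$ for $q\in G$, I would define the witnessing family $\psi_{\alpha\gamma}$ for $\alpha<\gamma$ in $B$ by a formula of the shape $\psi_{\alpha\gamma} := (-1)\,\mathtt e(\alpha,\gamma,\delta)$ for a large $\delta\in B$, chosen so that the answer is independent of $\delta$ (exactly as in the $n=1$ argument's $\psi$, and as in the local triviality witness $\langle(-1)^{n+1}\varphi_{\vec h ^\frown\langle g\rangle}\rangle$ recorded after Definition \ref{NCOH}); extend $\psi$ to all pairs in $B$ alternatingly, and then to a $\leq^*$-cofinal family using Lemma \ref{coftriv}. The cocycle identity for $e$ on $B$, plus the eventual agreement of $\varphi$'s under $\leq$ (from $2$-coherence), then yields $e(\alpha,\beta,\gamma) =^* \psi_{\beta\gamma} - \psi_{\alpha\gamma} + \psi_{\alpha\beta}$ for all $\alpha<\beta<\gamma$ in $B$, which by Lemma \ref{finsup} and Lemma \ref{coftriv} means $\Phi$ is $2$-trivial. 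The main obstacle, and where the real content lies, is the linking step: ensuring that a single auxiliary index $\delta$ can be found above three given indices $\alpha,\beta,\gamma$ so that all three of the relevant ``face'' conditions $q_{\alpha\beta\delta}, q_{\alpha\gamma\delta}, q_{\beta\gamma\delta}$ are simultaneously in the generic filter — which requires the domains $u_{\langle\alpha,\gamma\rangle}, u_{\langle\beta,\gamma\rangle}$ and their one-step extensions to interact correctly, i.e., to be aligned with matching stems wherever they overlap, so that Lemma \ref{compatibility_lemma} applies. This is precisely what item (2b) of Lemma \ref{strong_delta_system_lemma} and the coherence clause (1b) were designed to supply, and verifying that the combinatorics go through cleanly in dimension $2$ is the heart of the proof and the template for the general case in Section \ref{lim_n section}.
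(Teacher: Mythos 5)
Your overall strategy --- uniformize the conditions $q_{\alpha\beta\gamma}$ via the multidimensional $\Delta$-system lemma, find an unbounded $\dot B$ of ``well-behaved'' indices, and then produce the finitely-supported family $\Psi$ from the decided values $\mathtt e$ --- is aligned with the paper's. But there is a genuine gap in the linking step, and it is the crux of the whole argument.

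You propose a ``three-element linking lemma'': for $\alpha < \beta < \gamma$ in $\dot B$, find a single $\delta \in A$ above $\gamma$ with $q_{\alpha\beta\delta}$, $q_{\alpha\gamma\delta}$, $q_{\beta\gamma\delta}$ simultaneously in $\dot G$. To run this through Lemma \ref{compatibility_lemma} you need these three conditions (and a given $r$) to be pairwise compatible with matching stems where their domains intersect, which you try to extract from item (2b) of Lemma \ref{strong_delta_system_lemma} and item (1) of Lemma \ref{uniformizing_lemma}. But those give alignment of the $u$'s \emph{only when the index tuples themselves are aligned}, and here they are not: for $\alpha < \beta < \gamma < \delta$ the triples $\{\alpha,\beta,\delta\}$ and $\{\beta,\gamma,\delta\}$ share $\beta$, yet $|\{\alpha,\beta,\delta\}\cap\beta| = 1 \neq 0 = |\{\beta,\gamma,\delta\}\cap\beta|$. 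So $u_{\langle\alpha,\beta,\delta\rangle}$ and $u_{\langle\beta,\gamma,\delta\rangle}$ need not be aligned, stems may clash, and $q_{\alpha\beta\delta}$ and $q_{\beta\gamma\delta}$ need not be compatible. There is no choice of $\delta$ that repairs this; the misalignment lives entirely in the first two coordinates. Consequently the claimed linking lemma cannot be proven by this machinery, and the proposal stalls here.

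The paper's way around this is exactly what your sketch omits: $A$ is first partitioned into two disjoint unbounded pieces $\Gamma_1$ and $\Gamma_2$, with $\dot B \subseteq \Gamma_1$, and the linking is done in \emph{two stages}. First (Claim \ref{linking_claim_2}), for each \emph{pair} $\sigma \in [\dot B]^2$, an $\alpha_\sigma \in \Gamma_2$ is found with $q_{\eta,\alpha_\sigma} \in \dot G$ for both $\eta \in \sigma$. Then (Claim \ref{amalg_claim_2}), for each \emph{triple} $\tau \in [\dot B]^3$, an $\alpha_\tau \in A$ is found with $q_{\eta,\alpha_\sigma,\alpha_\tau} \in \dot G$ for all $\sigma \in [\tau]^2$ and $\eta \in \sigma$. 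Here the index triples all take the form $\{\eta, \alpha_\sigma, \alpha_\tau\}$ with $\eta \in \Gamma_1$ and $\alpha_\sigma \in \Gamma_2$; precisely because $\Gamma_1$ and $\Gamma_2$ are disjoint, any two such triples sharing the last coordinate $\alpha_\tau$ \emph{are} aligned, and Lemma \ref{compatibility_lemma} applies. The trivialization is then $\psi_\rho = e(\rho, \alpha_\rho)$ (note the positive sign --- your proposed $\psi_{\alpha\gamma} = -\mathtt e(\alpha,\gamma,\delta)$ gets the sign backwards: with it the putative identity would force $\epsilon = -\epsilon$), and the cocycle identity $e(\tau) = \psi_{\tau^0} - \psi_{\tau^1} + \psi_{\tau^2}$ is derived by the explicit cancellation argument using $\mathbf a_{012}, \mathbf a_{12}, \mathbf a_{02}, \mathbf a_{01}$, exploiting that all the ``long string'' terms $e(\eta, \alpha_\sigma, \alpha_\tau)$ equal $\epsilon$ on their supports. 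In short: the two-level auxiliary index scheme and the $\Gamma_1$/$\Gamma_2$ partition are not optional bookkeeping --- they are the mechanism that makes the compatibility checks go through, and your single-index shortcut breaks precisely on the alignment that they were designed to enforce.
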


\begin{proof}
  As in the proof of Theorem \ref{hechler_theorem_one}, for all $\alpha < \kappa$, let
  $\dot{g}_\alpha$ be a nice $\bb{P}$-name for the Hechler real added at the $\alpha^{\textnormal{th}}$ stage of $\mathbb{P}$. Much as before, it will suffice to consider 2-coherent families in $V^{\bb{P}}$ indexed
  by pairs of functions from $\langle \dot{g}_\alpha \mid \alpha < \kappa \rangle$.
  Therefore fix a condition $p_0 \in \bb{P}$ and a sequence of $\bb{P}$-names
  $\dot{\Phi} = \left\langle \dot{\varphi}_{\alpha \beta}:I(\dot{g}_\alpha \wedge
  \dot{g}_\beta) \rightarrow \bb{Z} ~ \middle| ~ \alpha < \beta < \kappa \right\rangle$ forced
  by $p_0$ to be a 2-coherent family. We will find a $q \leq p_0$ which forces
  $\dot{\Phi}$ to be trivial.

  Let $A_0 = \kappa \setminus (\max(\dom(p_0)) + 1)$. Again, much as before, for all $\alpha < \beta < \gamma$ in $A_0$,
  there exists a condition $q_{\alpha, \beta, \gamma} \leq p_0$ such that
  $q_{\alpha, \beta, \gamma} \Vdash_{\bb{P}} ``\dot{g}_\alpha \leq \dot{g}_\beta
  \leq \dot{g}_\gamma"$ and $q_{\alpha, \beta, \gamma}$ decides the value of
  $\dot{\mathtt{e}}(\alpha, \beta, \gamma)$ to be  equal to some $\epsilon_{\alpha, \beta, \gamma} \in V$.
  This condition exists because $\dot{e}(\alpha, \beta, \gamma)$ is forced by $p_0$ to
  be a finitely supported function. 
  Thin out $A_0$ to an unbounded $A_1$ such that
  $\epsilon_{\alpha, \beta, \gamma}$ equals some fixed $\epsilon$ for all
  $\langle \alpha, \beta, \gamma \rangle \in [A_1]^3$.

  Now apply Lemma \ref{uniformizing_lemma} to $\left\langle q_{\alpha, \beta, \gamma}
  ~ \middle| ~ \langle \alpha, \beta, \gamma \rangle \in [A_1]^3 \right\rangle$ to find
  an unbounded $A \subseteq A_1$
  together with sets $u_\emptyset$, $\left\langle u_{\langle \alpha \rangle} ~ \middle| ~
  \alpha \in A \right\rangle$, and $\left\langle u_{\langle \alpha, \beta \rangle}  ~ \middle| ~
  \alpha < \beta \in A \right\rangle$, a natural number $\ell$, and stems
  $\langle s_i \mid i < \ell \rangle$ as in the statement of the lemma. Define
  conditions $q_\emptyset$, $\langle q_\alpha \mid \alpha \in A \rangle$, and
  $\langle q_{\alpha, \beta} \mid \alpha < \beta \in A \rangle$ as follows.
  First, for any $\alpha < \beta < \gamma$ in $A$ let
  $q_\emptyset = q_{\alpha, \beta, \gamma} \restriction u_\emptyset$. Next,
  for each fixed $\alpha \in A$, choose $\beta < \gamma$ in $A$ with
  $\beta > \alpha$ and let $q_\alpha = q_{\alpha, \beta, \gamma} \restriction
  u_{\langle \alpha \rangle}$. Finally, for fixed $\alpha < \beta$ in $A$,
  choose a  $\gamma \in A$ with $\gamma > \beta$ and let
  $q_{\alpha, \beta} = q_{\alpha, \beta, \gamma} \restriction
  u_{\langle \alpha, \beta \rangle}$. By item (3) of Lemma \ref{uniformizing_lemma}, these definitions
  are independent of all of our choices.

  Let $q = q_\emptyset$, and note that $q \leq p_0$. We claim that $q$ forces $\dot{\Phi}$ to be trivial.
  This we argue by first partitioning $A$ into two disjoint and unbounded subsets, $\Gamma_1$ and
  $\Gamma_2$. Let $\dot{B}$ be a $\bb{P}$-name for the set of
  $\alpha \in \Gamma_1$ such that $q_\alpha \in \dot{G}$, where $\dot{G}$ is the canonical name for the $\mathbb{P}$-generic filter. The proofs of the
  next two claims follow those of Claims \ref{unbounded_b_claim}
  and \ref{linking_claim_1} almost verbatim, so we omit them.

  \begin{claim} \label{unbounded_b_claim_2}
    $q \Vdash_{\bb{P}} ``\dot{B} \text{ is unbounded in } \kappa."$
  \end{claim}

  \begin{claim} \label{linking_claim_2}
    $q$ forces that, for all $\alpha < \beta$ in $\dot{B}$, there is a
    $\gamma \in (\Gamma_2 \setminus \beta + 1)$ such that
    $q_{\alpha, \gamma}$ and $q_{\beta, \gamma}$ are in $\dot{G}$.
  \end{claim}

  \begin{claim} \label{amalg_claim_2}
    $q$ forces the following to hold in $V^{\bb{P}}$: Suppose that
    $\tau$ is in $[\dot{B}]^3$ and $\langle\alpha_\sigma \mid \sigma \in [\tau]^2 \rangle$
    is such that, for all $\sigma \in [\tau]^2$ and all $\eta \in \sigma$,
    \begin{itemize}
      \item $\alpha_\sigma \in \Gamma_2 \setminus (\eta + 1)$, and
      \item $q_{\eta, \alpha_{\sigma}}\in\dot{G}$.
    \end{itemize}
    Then there is an $\alpha_{\tau}$ in $A \setminus \left(\max\{\alpha_\sigma \mid
    \sigma \in [\tau]^2\} + 1\right)$
    such that, for all $\sigma \in [\tau]^2$ and $\eta \in \sigma$,
    we have $q_{\eta, \alpha_\sigma, \alpha_\tau} \in \dot{G}$.
  \end{claim}

  \begin{proof}
    Fix $\tau \in [\Gamma_1]^3$, $\{\alpha_\sigma \mid \sigma \in [\tau]^2\}$,
    and $r \leq q$ such that $r$ forces $\tau$ and $\{\alpha_\sigma \mid
    \sigma \in [\tau]^2\}$ to be as in the premise of the claim.
    Much as before, we may assume that $r$ extends $q_{\eta, \alpha_\sigma}$
    for all $\sigma \in [\tau]^2$ and $\eta \in \sigma$. By construction,
    for all such $\sigma$ and $\eta$, the sequence $\left\langle u_{\langle \eta,
    \alpha_\sigma, \gamma \rangle} \setminus u_{\langle \eta, \alpha_\sigma
    \rangle} ~ \middle| ~ \gamma \in (A \setminus \alpha_\sigma
    + 1) \right\rangle$ consists of pairwise disjoint sets. Hence there exists
    an $\alpha_\tau \in A \setminus \left(\max\{\alpha_\sigma \mid \sigma \in
    [\tau]^2\} + 1\right)$ such that
    \[
      \left(u_{\langle \eta, \alpha_\sigma, \alpha_\tau \rangle} \setminus
      u_{\langle \eta, \sigma \rangle}\right) \cap \dom(r) = \emptyset
    \]
    for all $\sigma \in [\tau]^2$ and $\eta \in \sigma$. Observe that,
    for any $\sigma, \rho \in [\tau]^2$, $\eta \in \sigma$, and $\xi \in \rho$,
    the sets $\{\eta, \alpha_\sigma\}$ and $\{\xi, \alpha_\rho\}$ are aligned
    (it was to ensure this alignment that we partitioned $A$ into the
    disjoint sets $\Gamma_1$ and $\Gamma_2$). Therefore, by item (2b) of Lemma
    \ref{strong_delta_system_lemma}, the sets $u_{\langle \eta, \alpha_\sigma,
    \alpha_\tau \rangle}$ and $u_{\langle \xi, \alpha_\rho, \alpha_\tau
    \rangle}$ are aligned, hence by item (1) of Lemma \ref{uniformizing_lemma}
    the conditions $q_{\eta, \alpha_\sigma, \alpha_\tau}$ and
    $q_{\xi, \alpha_\rho, \alpha_\tau}$ are compatible, with identical stems
    wherever their domains intersect. Now apply Lemma \ref{compatibility_lemma}
    to $\left\langle q_{\eta, \alpha_\sigma, \alpha_\tau} ~ \middle| ~ \sigma \in [\tau]^2, ~
    \eta \in \sigma \right\rangle$ and $r$ to find a single condition $s \in \bb{P}$ simultaneously
    extending all of these conditions. This $s$ forces $\alpha_\tau$ to be as
    desired.
  \end{proof}

  We now turn more directly to the argument that $q$ forces $\dot{\Phi}$ to be trivial.
  Let $G$ be a $\bb{P}$-generic filter with $q \in G$; work in $V[G]$, and let
  $B$ and $\Phi$ be the realizations of $\dot{B}$ and $\dot{\Phi}$. Since $B$ is cofinal in $\kappa$, the family $\{g_\eta\mid\eta\in B\}$ is $\leq^*$-cofinal in $^\omega\omega$. Therefore it will suffice to find a family
  $\Psi=\left\langle \psi_{\eta \xi}:I(g_\eta \wedge g_\xi)
  \rightarrow \bb{Z} ~ \middle| ~ \eta < \xi \in B \right\rangle$ of finitely
  supported functions such that,
  for all $\eta_0 < \eta_1 < \eta_2$ in $B$,
  \[
  e(\eta_0, \eta_1, \eta_2) = (\psi_{\eta_1 \eta_2}
  - \psi_{\eta_0 \eta_2} + \psi_{\eta_1 \eta_2})\restriction I(g_{\eta_0} \wedge
  g_{\eta_1}\wedge g_{\eta_2}).
  \]

  Using Claim \ref{linking_claim_2}, choose ordinals $\langle \alpha_\sigma
  \mid \sigma \in [B]^2 \rangle$ such that $\alpha_\sigma \in (\Gamma_2 \setminus
  \eta + 1)$ and $q_{\eta, \alpha_\sigma} \in G$ for all $\sigma \in [B]^2$
  and all $\eta \in \sigma$. Then, using
  Claim \ref{amalg_claim_2}, choose ordinals $\langle\alpha_\tau \mid
  \tau \in [B]^3 \rangle$ such that $\alpha_\tau \in (A \setminus \alpha_\sigma + 1)$ and
  $q_{\eta, \alpha_\sigma, \alpha_\tau} \in G$ for all $\tau \in [B]^3$, all
  $\sigma \in [\tau]^2$, and all $\eta \in \sigma$. Together these choices ensure that
  $g_\eta \leq g_{\alpha_\sigma} \leq q_{\alpha_\tau}$
  and $\mathtt{e}(\eta, \alpha_\sigma, \alpha_\tau) = \epsilon$ for all $\tau \in [B]^3$ and $\sigma
  \in [\tau]^2$ and $\eta \in \sigma$.

  Now for arbitrary elements $\eta < \xi$ in $B$ let $\psi_{\eta \xi} = e(\eta, \xi, \alpha_{\{\eta,\xi\}})$.
  Since $g_\eta$ and  $g_\xi$ are both less than or equal to $g_{\alpha_{\{\eta,\xi\}}}$, the domain
  of $\psi_{\eta \xi}$ is in fact $I(g_\eta \wedge g_\xi)$, as desired.
  We claim that the family $\Psi$ so defined witnesses the triviality of $\Phi$.

  To see this, let $\eta_0 < \eta_1 < \eta_2$ be arbitrary elements of $B$.
  To simplify notation, let $\alpha_i = \eta_i$ and $\alpha_{ij} =
  \alpha_{\{\eta_i, \eta_j\}}$ for $i < j < 3$, and let $\alpha_{012} =
  \alpha_{\{\eta_0, \eta_1, \eta_2\}}$. We then have the following series of equations, each of which follows immediately from an expansion of expressions followed by a cancellation of like terms:
  \begin{align*}\tag{$\mathbf{a}_{012}$}
    e(\alpha_1, \alpha_2, \alpha_{012})  - e(\alpha_0, \alpha_2, \alpha_{012})
    + e(\alpha_0, \alpha_1, \alpha_{012})  - e(\alpha_0,\alpha_1,\alpha_2) & = 0, &
  \\ \tag{$\mathbf{a}_{12}$}
     e(\alpha_2, \alpha_{12}, \alpha_{012})  - e(\alpha_1, \alpha_{12}, \alpha_{012})
     + e(\alpha_1, \alpha_2, \alpha_{012})  - e(\alpha_1,\alpha_2,\alpha_{12}) & = 0, &
 \\ \tag{$\mathbf{a}_{02}$}
    e(\alpha_2, \alpha_{02}, \alpha_{012}) - e(\alpha_0, \alpha_{02}, \alpha_{012})
    + e(\alpha_0, \alpha_2, \alpha_{012}) - e(\alpha_0,\alpha_2,\alpha_{02}) & = 0, &
   \\ \tag{$\mathbf{a}_{01}$}
    e(\alpha_1, \alpha_{01}, \alpha_{012}) - e(\alpha_0, \alpha_{01}, \alpha_{012})
    + e(\alpha_0, \alpha_1, \alpha_{012}) - e(\alpha_0,\alpha_1,\alpha_{01}) & = 0.
& \end{align*}
  The logic of the labeling is as follows: the operative indices in
  $\mathbf{a}_{012}$ are $0,1,2,$ and $012$. The operative indices in
  $\mathbf{a}_{12}$ are $1,2,12$, and $012$; similarly for $\mathbf{a}_{02}$
  and $\mathbf{a}_{01}$. Here we have preferred readable equations to perfectly
  rigorous ones and have therefore omitted restriction-notations; the essential
  observation about the domains of the above functions is simply that
  $I(g_{\alpha_0} \wedge g_{\alpha_1}\wedge g_{\alpha_2})$ is a subdomain of
  every one. This is because $g_\eta \leq g_{\alpha_\sigma} \leq g_{\alpha_\tau}$ for every
  $\tau \in [B]^3$, $\sigma \in [\tau]^2$, and $\eta \in \sigma$.

  By construction, the restriction of each of the first two terms of $\mathbf{a}_{12}$, $\mathbf{a}_{02}$,
  and $\mathbf{a}_{01}$ to their support is equal to $e$.
  In consequence, these terms all cancel in the sum
  $\mathbf{a}_{12}-\mathbf{a}_{02}+\mathbf{a}_{01}-\mathbf{a}_{012}$, which reduces in turn,
  via cancellation of the third terms of $\mathbf{a}_{12}$, $\mathbf{a}_{02}$,
  and $\mathbf{a}_{01}$ with the like terms in $\mathbf{a}_{012}$, to
  \begin{align}\label{certification1}
    e(\alpha_0,\alpha_1,\alpha_2)- e(\alpha_1,\alpha_2,\alpha_{12})+
    e(\alpha_0,\alpha_2,\alpha_{02})-
    e(\alpha_0,\alpha_1,\alpha_{01}) =0.
  \end{align}
  We may rewrite this equation as
  \[
  e(\alpha_0, \alpha_1, \alpha_2) = (\psi_{\alpha_1 \alpha_2}
  - \psi_{\alpha_0 \alpha_2} + \psi_{\alpha_1 \alpha_2})\restriction I(g_{\alpha_0} \wedge
  g_{\alpha_1}\wedge g_{\alpha_2}).
  \]
  This is the relation we had desired; this concludes our proof.
\end{proof}

\section{Hechler forcing and $\lim^n \mathbf{A}$}\label{lim_n section}

In this section, we prove our Main Theorem.
We prepare for the proof by first building up
some technical machinery, frequently referring back to the proof of Theorem
\ref{hechler_theorem_two} for motivation. For the following sequence of definitions, let $\kappa$ be a fixed regular uncountable cardinal. Ultimately, of course, $\kappa$ will denote the weakly compact cardinal of our main theorem, but that hypothesis is irrelevant to Definitions and Claims \ref{61} through \ref{SisC}.

\begin{definition}\label{61}
  For any nonempty $\tau$ in $[\kappa]^{<\omega}$, a \emph{subset-initial segment
  of $\tau$} is a sequence $\sigma_1 \subseteq \cdots \subseteq \sigma_m \subseteq \tau$
  such that
  \begin{itemize}
    \item $m \leq |\tau|$ and
    \item $|\sigma_i| = i$ for all $i$ with $1 \leq i \leq m$.
  \end{itemize}
  We write $\vec{\sigma} \vartriangleleft \tau$ to indicate that
  $\vec{\sigma}$ is a subset-initial segment of $\tau$. If $m = |\tau|$, then
  we call $\vec{\sigma} = \langle \sigma_1, \cdots, \sigma_m \rangle\vartriangleleft\tau$ a \emph{long string} or
  \emph{long string for $\tau$}.
\end{definition}

Suppose now that $\langle g_\alpha \mid \alpha
< \kappa \rangle$ is an injective sequence of elements of ${^\omega}\omega$.  Suppose that for each positive integer $n$ the family
\[
  \Phi_n = \left\langle \varphi_{\vec{\alpha}}:I(\vec{\alpha})\to\mathbb{Z} ~ \middle| ~
  \vec{\alpha} \in [\kappa]^n \right\rangle
\]
is $n$-coherent, where $I(\vec{\alpha}) = I\left(\bigwedge_{i < n}\, g_{\alpha_i}\right)$
for each such $n$ and $\vec{\alpha} \in [\kappa]^n$. Let
$\vec{\Phi}$ denote the family $\left\langle \varphi_{\vec{\alpha}} ~ \middle| ~
\vec{\alpha} \in [\kappa]^{<\omega}, ~ \vec{\alpha} \neq \emptyset \right\rangle$.
Suppose also that $B \subseteq \kappa$ is unbounded and, to each nonempty $\tau \in [B]^{<\omega}$ we have assigned
an ordinal $\alpha_\tau < \kappa$ in such a way that
\begin{itemize}
  \item if $\tau = \{\eta\}$, then $\alpha_\tau = \eta$ and
  \item if $\rho \subsetneq \tau$, then $\alpha_\rho < \alpha_\tau$.
\end{itemize}
Given a nonempty $\tau \in [B]^{<\omega}$ and a subset-initial segment
$\vec{\sigma} \vartriangleleft \tau$, write $\vec{\alpha}[\vec{\sigma}]$
to denote the sequence $\langle \alpha_{\sigma_i} \mid 1 \leq i \leq
|\vec{\sigma}| \rangle$; note that this sequence is increasing by assumption.

For $\vec{\alpha} \in [\kappa]^{<\omega}$ of length at least two, let
\[
  e^{\vec{\Phi}}(\vec{\alpha}) = \sum_{i < |\vec{\alpha}|} (-1)^i
  \varphi_{\vec{\alpha}^i}
\]
When the family $\vec{\Phi}$ is clear from context, it will
be omitted from the superscript above. Similarly, we will continue to notationally suppress the restriction of sums of functions to the intersection of their domains.
Since each family $\Phi_n$ is $n$-coherent, the function $e(\vec{\alpha})$ is finitely supported for
any $\vec{\alpha} \in [\kappa]^{<\omega}$ of length at least two. We write $\mathtt{e}(\vec{\alpha})$ for the restriction of $e(\vec{\alpha})$ to its support.

We now record a series of formal equalities. First, for all $\vec{\alpha}
\in [\kappa]^{<\omega}$ of length at least three, let
\[
  \mathsf{d} e(\vec{\alpha}) = \sum_{i<|\vec{\alpha}|}(-1)^i e(\vec{\alpha}^i).
\]
Observe that when the right-hand side of the above equation is fully expanded,
its terms will cancel; $\mathsf{d} e$ is, after all, a composition of differentials.
Hence $\mathsf{d} e(\vec{\alpha})$ is well-defined and equals $0$ for all $\vec{\alpha}
\in [\kappa]^{<\omega}$ of length at least three.
If $L$ is some linear combination of the form
\[
  \sum_{i<\ell} a_i\, e(\vec{\alpha}_i)
\]
with all $\vec{\alpha}_i$ of length at least three then let
\[
  \mathsf{d} L=\sum_{i<\ell} a_i\, \mathsf{d} e(\vec{\alpha}_i).
\]
Similarly, if $j$ is less than $|\vec{\alpha}_i|$ for all $i$ then let
\[
  L^j=\sum_{i<\ell} a_i\, e(\vec{\alpha}^j_i).
\]
Finally, here dropping the assumption that each $\vec{\alpha}_i$ has length at
least three, for any $\vec{\beta}\in [\kappa]^{<\omega}$ with $\vec{\alpha}_i <
\vec{\beta}$ for all $i < \ell$, let
\[
  L*\vec{\beta}=\sum_{i<\ell} a_i\, e(\vec{\alpha}_i, \vec{\beta})
\]
 where $e(\vec{\alpha}_i, \vec{\beta}) = e(\vec{\alpha}_i
{^\frown} \vec{\beta})$. If $\vec{\beta} = \langle \beta \rangle$, then
we will abuse notation and write $L*\beta$ and $e(\vec{\alpha}_i, \beta)$
instead of $L*\langle \beta \rangle$ and $e(\vec{\alpha}_i, \langle \beta
\rangle)$.
Finally, if $j$ is less than $|\vec{\alpha}_i|$ for all $i$ then let
\[
  L^j=\sum_{i<\ell} a_i\, e(\vec{\alpha}^j_i)
\]

For integers $n\geq 2$ we now recursively define interrelated
\begin{itemize}
\item expressions $\mathcal{A}^{\vec{\Phi}}_n(\rho)$ parametrized by $\rho \in [B]^n$,
\item expressions $\mathcal{S}^{\vec{\Phi}}_n(\tau)$ and
$\mathcal{C}^{\vec{\Phi}}_n(\tau)$ parametrized by $\tau \in [B]^{n+1}$, and
\item statements $\mathfrak{u}^{\vec{\Phi}}_n(\tau)$ parametrized by $\tau \in [B]^{n+1}$.
\end{itemize}
Again when the family $\vec{\Phi}$ is clear from context it is omitted from superscripts. In fact the above expressions will depend also on the
collection $\langle \alpha_\tau \mid \tau \in [B]^{<\omega}\rangle$ fixed earlier, but this dependence is always plain enough that we ignore it, notationally, entirely.

To begin, let
\[
  \mathcal{A}_2(\rho) = e(\rho, \alpha_\rho)
\]
for each $\rho \in [B]^2$. Recall that we interpret elements of $[\kappa]^{<\omega}$
as finite increasing sequences, so, for example, if $\rho = \langle \beta_0, \beta_1 \rangle$,
then $e(\rho, \alpha_\rho)$ denotes $e(\langle \beta_0, \beta_1, \alpha_\rho \rangle)$.
Next, given $n$ with $2 \leq n < \omega$ and $\tau \in [B]^{n+1}$, if
$\mathcal{A}_n(\tau^i)$ has been defined for all $i \leq n$, let
\begin{align*}
  \mathcal{S}_n(\tau) &= \mathsf{d} e(\tau, \alpha_\tau) -
  \sum_{i < n + 1} (-1)^i \mathsf{d}[\mathcal{A}_n(\tau^i) * \alpha_\tau], \\
  \mathcal{C}_n(\tau) &= e(\tau) - \sum_{i < n + 1}
  (-1)^i \mathcal{A}_n(\tau^i),
\end{align*}
and let $\mathfrak{u}_n(\tau)$ denote the conjunction of the following two
statements:
\begin{itemize}
  \item There exists an $\epsilon$ such that $\mathtt{e}(\vec{\alpha}
  [\vec{\sigma}]) = \epsilon$ for every long string $\vec{\sigma}
  \vartriangleleft \tau$.
  \item For all nonempty $\rho, \sigma$ with $\rho \subsetneq \sigma
  \subseteq \tau$, we have $g_{\alpha_\rho} \leq g_{\alpha_\sigma}$.
\end{itemize}
Lastly, let
\[
  \mathcal{A}_{n+1}(\tau) = (-1)^{n+1}\mathcal{C}_n(\tau) * \alpha_\tau.
\]

Let us pause to connect these definitions with the proof of
Theorem \ref{hechler_theorem_two}.  There $B$ was an unbounded subset of $\kappa$. For $\rho \in [B]^2$, we set $\psi_\rho$ equal to $\mathcal{A}_2(\rho)$. Then by deducing equation
(\ref{certification1}) for an arbitrary $\tau \in [B]^3$, we showed that this assignment of values was as desired.
In the language just introduced, equation (\ref{certification1}) translates  to $\mathcal{C}_2(\tau) = 0$. Similarly, the expression $\mathbf{a}_{12} - \mathbf{a}_{02} + \mathbf{a}_{01} -
\mathbf{a}_{012}$ corresponds to $-\mathcal{S}_2(\tau)$. By the definition of $B$, the condition
$\mathfrak{u}_2(\tau)$ holds for all $\tau\in [B]^3$; the relation
$\mathcal{C}_2(\tau) = -\mathcal{S}_2(\tau)$ then followed immediately. The terms
in $\mathcal{S}_2(\tau)$ are all of the form $\mathsf{d} e(\vec{\beta})$, hence $\mathcal{S}_2(\tau) = 0$. This, in essence, was the deduction that $\mathcal{C}_2(\tau) = 0$.

The following two lemmas are easily proven by induction, so their proofs are left
to the reader.

\begin{lemma} \label{expression_lemma_i}
  For all $n$ with $2 \leq n < \omega$, all $\rho \in [B]^n$, and all
  $\tau \in [B]^{n+1}$, the expressions $\mathcal{A}_n(\rho)$,
  $\mathcal{S}_n(\tau)$, and $\mathcal{C}_n(\tau)$ are all of the form
  \[
    \sum_{i < \ell} a_i e(\vec{\alpha}_i),
  \]
  where, for all $i < \ell$, we have
  \begin{itemize}
    \item $a_i \in \bb{Z}$,
    \item $\vec{\alpha}_i \in [\kappa]^{n+1}$, and
    \item $\vec{\alpha}_i$ is of the form $\sigma_0
    {^\frown} \langle \alpha_{\sigma_j} \mid 1 \leq j \leq (n+1 - |\sigma_0|)
    \rangle$,
    where
    \begin{itemize}
      \item $0 < |\sigma_0| \leq n + 1$ and,
      if $|\sigma_0| < n + 1$, then $1 < |\sigma_1| \leq n+1$,
      \item if $|\sigma_0| < n + 1$, then $\sigma_0 \subseteq \sigma_1$,
      \item $\sigma_j \subsetneq \sigma_{j+1}$ for all $1 \leq j < (n+1 -
      |\sigma_0|)$, and
      \item letting $k = (n + 1 - |\sigma_0|)$, we have $\sigma_k \subseteq \rho$,
      in the case of $\mathcal{A}_n(\rho)$, and $\sigma_k \subseteq \tau$,
      in the case of $\mathcal{S}_n(\tau)$ or $\mathcal{C}_n(\tau)$.
    \end{itemize}
  \end{itemize}
\end{lemma}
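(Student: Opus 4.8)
The plan is to argue by induction on $n \geq 2$, establishing at stage $n$ — in this order — that $\mathcal{A}_n(\rho)$ has the asserted form for every $\rho \in [B]^n$; that, granting this, both $\mathcal{S}_n(\tau)$ and $\mathcal{C}_n(\tau)$ have the asserted form for every $\tau \in [B]^{n+1}$; and finally that $\mathcal{A}_{n+1}(\tau) = (-1)^{n+1}\mathcal{C}_n(\tau) * \alpha_\tau$ therefore also has the asserted form, which closes the induction. The base case is immediate: $\mathcal{A}_2(\rho) = e(\rho, \alpha_\rho) = e(\rho {^\frown} \langle \alpha_\rho \rangle)$, so one takes $\ell = 1$, $a_0 = 1$, $\sigma_0 = \sigma_1 = \rho$, and every listed requirement holds (in particular $1 < |\sigma_1| = 2$ and $\sigma_1 \subseteq \rho$).

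The heart of the argument is to isolate two elementary stability facts about the operations out of which $\mathcal{S}_n$, $\mathcal{C}_n$, and $\mathcal{A}_{n+1}$ are assembled. Call a tuple $\vec{\beta} \in [\kappa]^m$ \emph{of shape $(\sigma_0; \sigma_1, \ldots, \sigma_k)$ with top set $T$} if $\vec{\beta} = \sigma_0 {^\frown} \langle \alpha_{\sigma_1}, \ldots, \alpha_{\sigma_k}\rangle$ with $k = m - |\sigma_0|$ and the inclusion, size, and terminal relations of the lemma's list all hold with $\sigma_k \subseteq T$ playing the role of ``$\sigma_k \subseteq \rho$'' or ``$\sigma_k \subseteq \tau$''. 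The first fact is: if $\vec{\beta}$ is of shape with top set $T \subseteq \tau$, and either $k = 0$ or $\sigma_k \subsetneq \tau$, then $\vec{\beta} {^\frown} \langle \alpha_\tau \rangle$ is again of shape, with top set $\tau$, obtained by appending $\sigma_{k+1} = \tau$; here one uses $|\tau| \geq 2$ together with the standing hypotheses $\alpha_\rho < \alpha_\tau$ for $\rho \subsetneq \tau$ and $\alpha_\sigma > \max \sigma$ for $|\sigma| \geq 2$ (the latter being a consequence of the former applied to $\{\max\sigma\} \subsetneq \sigma$) to see that the new tuple is genuinely increasing. The second fact is: the operator $\mathsf{d}$, being the alternating sum of single-coordinate deletions, sends a $\bb{Z}$-combination of shape-tuples of length $m \geq 3$ with a common top set $T$ to a $\bb{Z}$-combination of shape-tuples of length $m - 1$ with the same top set $T$. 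This is a short case analysis on whether the deleted coordinate lies in $\sigma_0$ or is one of $\alpha_{\sigma_1}, \ldots, \alpha_{\sigma_k}$; the only mildly delicate points are that deleting an interior $\alpha_{\sigma_l}$ preserves strictness of the chain because $\sigma_{l-1} \subsetneq \sigma_l \subsetneq \sigma_{l+1}$, and that deleting an element of $\sigma_0$ preserves $\sigma_0 \subseteq \sigma_1$ and the increasing property.

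Granting these, the inductive step is bookkeeping. Since $e(\tau, \alpha_\tau)$ is of shape for length $n+2$ with top set $\tau$ (take $\sigma_0 = \sigma_1 = \tau$), applying $\mathsf{d}$ shows the first summand of $\mathcal{S}_n(\tau)$ is a combination of shape-tuples of length $n+1$ with top set $\tau$; by the induction hypothesis each $\mathcal{A}_n(\tau^i)$ is a combination of shape-tuples of length $n+1$ with top set $\tau^i$, whose terminal sets $\sigma_k$ satisfy $\sigma_k \subseteq \tau^i \subsetneq \tau$, so $\mathcal{A}_n(\tau^i) * \alpha_\tau$ is (by the first fact) one of length $n+2$ with top set $\tau$, and $\mathsf{d}$ returns it to length $n+1$ with top set $\tau$; summing with signs settles $\mathcal{S}_n(\tau)$. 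For $\mathcal{C}_n(\tau)$ one notes that $e(\tau)$ is of shape for length $n+1$ with top set $\tau$ ($\sigma_0 = \tau$, $k = 0$), and that each $\mathcal{A}_n(\tau^i)$, of shape with top set $\tau^i \subseteq \tau$, is a fortiori of shape with top set $\tau$. Finally, $\mathcal{A}_{n+1}(\tau) = (-1)^{n+1}\bigl(\mathcal{C}_n(\tau) * \alpha_\tau\bigr)$, and every shape-tuple occurring in $\mathcal{C}_n(\tau)$ is either the $e(\tau)$-term (for which $k = 0$) or comes from some $\mathcal{A}_n(\tau^i)$ (for which $\sigma_k \subseteq \tau^i \subsetneq \tau$); in both cases the first stability fact applies, so $\mathcal{A}_{n+1}(\tau)$ is a combination of shape-tuples of length $n+2$ with top set $\tau$, as required.

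I expect the main obstacle to be purely organizational: tracking, across the two operations and the three families of expressions, the exact inclusion/strictness relations in $\sigma_0 \subseteq \sigma_1 \subsetneq \cdots \subsetneq \sigma_k$, the size constraint $1 < |\sigma_1|$, the boundary cases $k = 0$ and $|\sigma_0| = $ full length, and — crucially — the fact that the terminal set $\sigma_k$ of any tuple produced by $\mathcal{A}_n$ is a \emph{proper} subset of $\tau$ whenever that tuple arises from one of the $\tau^i$, which is precisely what legitimizes the $* \alpha_\tau$ step in the definition of $\mathcal{A}_{n+1}$. None of this is conceptually difficult, but it is exactly the kind of index-chasing the authors reasonably leave to the reader.
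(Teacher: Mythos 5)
The paper states that this lemma is ``easily proven by induction'' and leaves the proof to the reader, so there is no written argument in the paper to compare against; your proposal supplies the natural inductive argument that the authors clearly have in mind. The overall structure --- induction on $n$ in the order $\mathcal{A}_n \Rightarrow \mathcal{S}_n, \mathcal{C}_n \Rightarrow \mathcal{A}_{n+1}$, stability of the shape constraints under $*\alpha_\tau$ and under $\mathsf{d}$, and the key observation that each terminal $\sigma_k$ arising in $\mathcal{A}_n(\tau^i)$ is a \emph{proper} subset of $\tau$, which is what makes $*\alpha_\tau$ strictly extend the chain --- is all correct.

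There is, however, one genuine gap in your second stability fact, which is false as stated. If a shape-tuple has $|\sigma_0| = 1$ (and the lemma's constraint $0 < |\sigma_0|$ permits this), then deleting the unique element of $\sigma_0$ produces $\langle \alpha_{\sigma_1}, \ldots, \alpha_{\sigma_k} \rangle$, which has $\sigma_0' = \emptyset$ and so is not a shape-tuple at all; there is no reason for $\alpha_{\sigma_1}$ to lie in $B$, so it cannot be re-read as a new $\sigma_0'$. Your sketched case analysis (``deleting an element of $\sigma_0$ preserves $\sigma_0 \subseteq \sigma_1$ and the increasing property'') does not address the possibility that the deletion empties $\sigma_0$. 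The fix is easy but has to be stated: strengthen the induction to record that every shape-tuple occurring in $\mathcal{A}_n(\rho)$ and in $\mathcal{C}_n(\tau)$ has $|\sigma_0| \geq 2$. This is immediate from the recursion: $\mathcal{A}_2(\rho) = e(\rho, \alpha_\rho)$ has $|\sigma_0| = 2$; the term $e(\tau)$ of $\mathcal{C}_n(\tau)$ has $|\sigma_0| = n+1$; the operation $*\alpha_\tau$ leaves $|\sigma_0|$ unchanged; and $\mathcal{C}_n$ is built only from $e(\tau)$ and the $\mathcal{A}_n(\tau^i)$ while $\mathcal{A}_{n+1}$ is built from $\mathcal{C}_n * \alpha_\tau$. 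Since in the definition of $\mathcal{S}_n(\tau)$ the operator $\mathsf{d}$ is applied only to $e(\tau, \alpha_\tau)$ and to $\mathcal{A}_n(\tau^i) * \alpha_\tau$, both of whose tuples then have $|\sigma_0| \geq 2$, the deletion case is always safe. Shape-tuples with $|\sigma_0| = 1$ do genuinely appear in $\mathcal{S}_n(\tau)$ after such a deletion --- which is exactly why the lemma's bound on $|\sigma_0|$ is what it is --- but $\mathsf{d}$ is never applied to $\mathcal{S}_n(\tau)$ again, so this causes no trouble downstream.
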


The point of this lemma is that, although the expressions defining
$\mathcal{A}_n(\rho)$, $\mathcal{S}_n(\tau)$, $\mathcal{C}_n(\tau)$, and
$\mathfrak{u}_n(\tau)$ are constructed via a recursion referencing $\vec{\Phi}$, the values of
$\mathcal{A}_n(\rho)$, $\mathcal{S}_n(\tau)$, and $\mathcal{C}_n(\tau)$, and
the truth values of $\mathfrak{u}_n(\tau)$ only ever depend on $\Phi_n$
and not on $\Phi_m$ for any $m \neq n$. Hence we may meaningfully employ these expressions to argue as we do that $V^{\mathbb{P}}\vDash``\lim^n \mathbf{A} = 0\text{''}$ while only referencing a single family $\Phi_n$ at a time. Similarly, since the values of $\mathcal{A}_n(\rho)$,
$\mathcal{S}_n(\tau)$, and $\mathcal{C}_n(\tau)$ depend on the ordinals
$\langle\alpha_\sigma \mid \sigma \in [B]^{\leq n+1}\rangle$ but not on $\alpha_\sigma$ for
$|\sigma| > n+1$, verifications via these expressions that
$\lim^n \mathbf{A} = 0$ will never require that any ordinal $\alpha_\sigma$ of longer index ($|\sigma|>n+1)$ has yet been defined.

We can say more of $\mathcal{S}_n(\tau)$ than in
the previous lemma:

\begin{lemma} \label{expression_lemma_ii}
  For all $n$ with $2 \leq n < \omega$ and all $\tau \in [B]^{n+1}$,
  the expression $\mathcal{S}_n(\tau)$ is of the form
  \[
    \sum_{i < \ell} b_i \mathsf{d} e(\vec{\beta}_i),
  \]
  where $b_i \in \bb{Z}$ and $\vec{\beta}_i \in [\kappa]^{n+2}$ for all
  $i < \ell$. Since $\mathsf{d} e(\vec{\beta}) = 0$ for all
  $\vec{\beta} \in [\kappa]^{n+2}$, it follows that $\mathcal{S}_n(\tau) = 0$.
\end{lemma}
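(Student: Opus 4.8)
The plan is to deduce the statement directly from Lemma~\ref{expression_lemma_i}; the only induction involved is the one already carried out there (on $n$, tracking the mutual recursion that defines the $\mathcal{A}_n$, $\mathcal{S}_n$, and $\mathcal{C}_n$), so what remains here is a purely formal manipulation. First I would unwind the definition
\[
  \mathcal{S}_n(\tau) = \mathsf{d} e(\tau, \alpha_\tau) - \sum_{i < n+1}(-1)^i\,\mathsf{d}[\mathcal{A}_n(\tau^i) * \alpha_\tau]
\]
and handle its two summands separately. The leading term $\mathsf{d} e(\tau, \alpha_\tau)$ is already in the desired shape: since $\tau \in [B]^{n+1}$ and each $\eta \in \tau$ satisfies $\{\eta\} \subsetneq \tau$, hence $\eta = \alpha_{\{\eta\}} < \alpha_\tau$, the tuple $\vec{\beta} = \tau ^\frown \langle \alpha_\tau \rangle$ lies in $[\kappa]^{n+2}$, so $\mathsf{d} e(\tau, \alpha_\tau) = 1 \cdot \mathsf{d} e(\vec{\beta})$.

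For the remaining terms I would apply Lemma~\ref{expression_lemma_i} to $\mathcal{A}_n(\tau^i)$ (noting $\tau^i \in [B]^n$): it presents $\mathcal{A}_n(\tau^i) = \sum_{k} a_k\, e(\vec{\alpha}_k)$ with each $a_k \in \bb{Z}$ and each $\vec{\alpha}_k \in [\kappa]^{n+1}$, and the structural description of the $\vec{\alpha}_k$ given there shows that every ordinal occurring in any $\vec{\alpha}_k$ is either an element of $\tau^i$ or of the form $\alpha_\sigma$ for some $\sigma \subseteq \tau^i$; either way it is strictly below $\alpha_\tau$, using $\tau^i \subsetneq \tau$ together with the monotonicity of $\langle \alpha_\rho \mid \rho \in [B]^{<\omega}\rangle$. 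Hence $\vec{\alpha}_k < \alpha_\tau$, so $\vec{\alpha}_k ^\frown \langle \alpha_\tau \rangle \in [\kappa]^{n+2}$, and unwinding the definitions of $L * \alpha_\tau$ and $\mathsf{d} L$ gives
\[
  \mathsf{d}[\mathcal{A}_n(\tau^i) * \alpha_\tau] = \sum_{k} a_k\, \mathsf{d} e(\vec{\alpha}_k, \alpha_\tau),
\]
a $\bb{Z}$-linear combination of terms $\mathsf{d} e(\vec{\gamma})$ with $\vec{\gamma} \in [\kappa]^{n+2}$. Substituting this back into the displayed expression for $\mathcal{S}_n(\tau)$ exhibits $\mathcal{S}_n(\tau)$ in the asserted form $\sum_{j} b_j\, \mathsf{d} e(\vec{\beta}_j)$ with $b_j \in \bb{Z}$ and $\vec{\beta}_j \in [\kappa]^{n+2}$. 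Since $\mathsf{d} e(\vec{\beta}) = 0$ for every $\vec{\beta} \in [\kappa]^{<\omega}$ of length at least three --- in particular for every $\vec{\beta} \in [\kappa]^{n+2}$, as $n+2 \geq 4$ --- we conclude $\mathcal{S}_n(\tau) = 0$.

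I do not expect a genuine obstacle here; the one point meriting care is the bookkeeping confirming that all tuples produced are legitimate increasing members of $[\kappa]^{n+2}$, i.e., that $\alpha_\tau$ strictly dominates every ordinal generated inside each $\mathcal{A}_n(\tau^i)$. This is precisely what the hypotheses $\alpha_{\{\eta\}} = \eta$ and $(\rho \subsetneq \tau \Rightarrow \alpha_\rho < \alpha_\tau)$ on $\langle \alpha_\tau \mid \tau \in [B]^{<\omega}\rangle$, combined with the structural description in Lemma~\ref{expression_lemma_i}, are designed to furnish; everything else is the evident commutation of the operators $\mathsf{d}$, $L \mapsto L * \alpha_\tau$, and $L \mapsto L^j$ with formation of $\bb{Z}$-linear combinations.
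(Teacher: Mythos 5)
Your proposal is correct. The paper explicitly leaves the proofs of Lemmas \ref{expression_lemma_i} and \ref{expression_lemma_ii} to the reader ("easily proven by induction"), and your argument is precisely the intended one: the leading term $\mathsf{d} e(\tau,\alpha_\tau)$ is already of the desired form with $\tau ^\frown \langle \alpha_\tau \rangle \in [\kappa]^{n+2}$, and the structural description of $\mathcal{A}_n(\tau^i)$ from Lemma \ref{expression_lemma_i} (every $\vec{\alpha}_k$ consists of elements of $\tau^i$ and ordinals $\alpha_\sigma$ with $\sigma \subseteq \tau^i \subsetneq \tau$, hence all $< \alpha_\tau$) guarantees that the $*$-operation is licit and that $\mathsf{d}[\mathcal{A}_n(\tau^i)*\alpha_\tau]$ expands to a $\bb{Z}$-linear combination of $\mathsf{d} e(\vec{\beta})$ with $\vec{\beta} \in [\kappa]^{n+2}$, since both $\mathsf{d}$ and $L \mapsto L*\alpha_\tau$ distribute over $\bb{Z}$-linear combinations by definition. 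The bookkeeping point you flag as meriting care is indeed the only point of substance, and you handle it correctly.
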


The following is the main technical lemma regarding these formal expressions.
Recall that we often interpret an element $\tau \in [\kappa]^{n+1}$ has an increasing
sequence, so, for example, the expression $I(\tau)$ denotes
$I\left(\bigwedge_{\alpha \in \tau} g_\alpha \right)$.

\begin{lemma} \label{s_c_lemma}
  Suppose that $2 \leq n < \omega$, $\tau \in [B]^{n+1}$, and
  $\mathfrak{u}_n(\tau)$ holds. Then
  \[
    \mathcal{S}_n(\tau)=(-1)^{n+1}\mathcal{C}_n(\tau).
  \]
\end{lemma}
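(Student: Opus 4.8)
The plan is to reduce the claim to a single Leibniz-type identity relating the operators $\mathsf{d}$ and $(-)*\beta$, combined with the vanishing of $\mathsf{d} e(\vec{\gamma})$ for $|\vec{\gamma}| \geq 3$ and the structural description of $\mathcal{A}_n(\tau^i)$ supplied by Lemma \ref{expression_lemma_i}. The first step is to record, for any $\vec{\gamma} \in [\kappa]^{<\omega}$ with $k := |\vec{\gamma}| \geq 2$ and any $\beta$ with $\vec{\gamma} < \beta$, the identity
\[
  \mathsf{d} e(\vec{\gamma}, \beta) = (\mathsf{d} e(\vec{\gamma})) * \beta + (-1)^{k} e(\vec{\gamma});
\]
this is immediate from the definition of $\mathsf{d}$ by separating, in $\mathsf{d} e(\vec{\gamma} ^\frown \langle \beta \rangle) = \sum_{j \leq k}(-1)^j e\big((\vec{\gamma} ^\frown \langle \beta \rangle)^j\big)$, the terms with $j < k$ (which give $e(\vec{\gamma}^j, \beta)$) from the single term $j = k$ (which gives $(-1)^k e(\vec{\gamma})$). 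Extending by $\mathbb{Z}$-linearity, if $L = \sum_{j < \ell} a_j e(\vec{\gamma}_j)$ with every $|\vec{\gamma}_j| = k \geq 2$ and every $\vec{\gamma}_j < \beta$, then $\mathsf{d}[L * \beta] = (\mathsf{d} L) * \beta + (-1)^k L$.

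Next I would apply this identity twice. Taking $\vec{\gamma} = \tau$ and $\beta = \alpha_\tau$ (permissible since $\eta = \alpha_{\{\eta\}} < \alpha_\tau$ for each $\eta \in \tau$): as $|\tau| = n + 1 \geq 3$ we have $\mathsf{d} e(\tau) = 0$, so $\mathsf{d} e(\tau, \alpha_\tau) = (-1)^{n+1} e(\tau)$. Taking $L = \mathcal{A}_n(\tau^i)$ with $k = n+1$ and $\beta = \alpha_\tau$: by Lemma \ref{expression_lemma_i} the summands $e(\vec{\alpha})$ of $\mathcal{A}_n(\tau^i)$ all have $\vec{\alpha} \in [\kappa]^{n+1}$ and, by the explicit form given there together with the standing requirement $\alpha_\rho < \alpha_\sigma$ for $\rho \subsetneq \sigma$, satisfy $\vec{\alpha} < \alpha_\tau$; since $n + 1 \geq 3$, each $\mathsf{d} e(\vec{\alpha}) = 0$, hence $\mathsf{d}\mathcal{A}_n(\tau^i) = 0$ and $\mathsf{d}[\mathcal{A}_n(\tau^i) * \alpha_\tau] = (-1)^{n+1}\mathcal{A}_n(\tau^i)$. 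Substituting into the definition of $\mathcal{S}_n(\tau)$ then yields
\[
  \mathcal{S}_n(\tau) = (-1)^{n+1} e(\tau) - \sum_{i < n+1}(-1)^i (-1)^{n+1}\mathcal{A}_n(\tau^i) = (-1)^{n+1}\Big(e(\tau) - \sum_{i < n+1}(-1)^i \mathcal{A}_n(\tau^i)\Big) = (-1)^{n+1}\mathcal{C}_n(\tau),
\]
which is the desired conclusion.

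The only delicate point — and the sole place where the hypothesis $\mathfrak{u}_n(\tau)$ is needed — is domain bookkeeping: all of the equalities above are equalities of functions only after restriction to a common domain, and for the argument to produce an identity on the domain carried by $\mathcal{C}_n(\tau)$ one must know that $I(\tau)$ is contained in the domain of every function occurring when $\mathcal{S}_n(\tau)$ and $\mathcal{C}_n(\tau)$ are fully expanded. This is exactly what the second clause of $\mathfrak{u}_n(\tau)$ provides: from $g_{\alpha_\rho} \leq g_{\alpha_\sigma}$ for $\rho \subsetneq \sigma \subseteq \tau$ (and $\alpha_{\{\eta\}} = \eta$), every ordinal occurring in any relevant tuple $\vec{\alpha}$ — whether it lies in some $\sigma_0 \subseteq \tau$ or has the form $\alpha_{\sigma_j}$ with $\sigma_j \subseteq \tau$ — has $g$-value at least $\bigwedge_{\eta \in \tau} g_\eta$, so $I(\vec{\alpha}) \supseteq I(\tau)$, and likewise $I(g_{\alpha_\tau}) \supseteq I(\tau)$. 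Hence every function in sight is defined on $I(\tau)$, the cancellations $\mathsf{d} e(\vec{\gamma}) = 0$ and the Leibniz identity are valid there, and the chain of equalities above holds verbatim as an identity of finitely supported functions on $I(\tau)$. I expect this domain-tracking to be the only real obstacle; the algebra itself is purely formal and the signs are routine.
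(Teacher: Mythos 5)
Your proposal hinges on the claim that $\mathsf{d} L=0$ implies $(\mathsf{d} L)*\beta=0$, and that is where it breaks down. The operation $L\mapsto L*\beta$ is defined on formal $\mathbb{Z}$-linear combinations of $e$-symbols, not on their values as functions, and the vanishing of $\mathsf{d} e(\vec{\gamma})$ for $|\vec{\gamma}|\geq 3$ is a cancellation that occurs only after one further expansion into $\varphi$'s. Appending $\beta$ to every index disturbs exactly the part of that cancellation that involved deleting the last slot. Concretely, if $k=|\vec{\gamma}|$ then
\[
(\mathsf{d} e(\vec{\gamma}))*\beta \;=\; \sum_{l<k}(-1)^l e(\vec{\gamma}^l,\beta)
\;=\;(-1)^{k-1}e(\vec{\gamma}),
\]
which is not zero; by linearity $(\mathsf{d} L)*\beta=(-1)^{k-1}L$ whenever all summands of $L$ have length $k$. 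Feeding this into your Leibniz identity $\mathsf{d}[L*\beta]=(\mathsf{d} L)*\beta+(-1)^kL$ gives $\mathsf{d}[L*\beta]=(-1)^{k-1}L+(-1)^kL=0$, not $(-1)^kL$. The same applies to $\mathsf{d} e(\tau,\alpha_\tau)$, which is literally zero by Lemma \ref{expression_lemma_ii} rather than $(-1)^{n+1}e(\tau)$. Substituting the correct values, your computation returns $\mathcal{S}_n(\tau)=0$, which is Lemma \ref{expression_lemma_ii}, not the identity $\mathcal{S}_n(\tau)=(-1)^{n+1}\mathcal{C}_n(\tau)$.

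There is also a structural sanity check that your argument fails. You never invoke the first clause of $\mathfrak{u}_n(\tau)$, the uniformity of $\mathtt{e}(\vec{\alpha}[\vec{\sigma}])$ across long strings $\vec{\sigma}\vartriangleleft\tau$; you use only the domain clause. But if $\mathcal{S}_n(\tau)=(-1)^{n+1}\mathcal{C}_n(\tau)$ held without the uniformity clause, then, combined with $\mathcal{S}_n(\tau)=0$, one would have $\mathcal{C}_n(\tau)=0$, i.e., $e(\tau)=\sum_{i<n+1}(-1)^i\mathcal{A}_n(\tau^i)$, unconditionally, which would assert that every $n$-coherent family is trivial in ZFC, contradicting known independence results. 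The uniformity clause is exactly what the paper's proof exploits via the so-called type (2) cancellations, in which distinct terms $e(\vec{\alpha}[\vec{\sigma}_0])$ and $e(\vec{\alpha}[\vec{\sigma}_1])$ for long strings $\vec{\sigma}_0,\vec{\sigma}_1\vartriangleleft\tau$ are identified through the common value $\epsilon$; that mechanism has no counterpart in your argument, so the gap is not repairable by sharper bookkeeping alone.
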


\begin{proof}
  We proceed by induction on $n$, in fact establishing the following
  strengthening of the lemma, which we maintain as an inductive hypothesis:
  \begin{align*}
    \mathfrak{u}_n(\tau) & \text{ induces a reduction of } \mathcal{S}_n(\tau)
    \text{ to } (-1)^{n+1}\mathcal{C}_n(\tau) \text{ via } \\
    \text{(1)} & \text{ the cancellation of identical terms appearing
    with opposite sign} \\ & \text{ in the expansion of } \mathcal{S}_n(\tau), \\
    \text{(2)} & \text{ the cancellation of terms of the form } e(\vec{\alpha}
    [\vec{\sigma}]) \text{ in which } \vec{\sigma} \vartriangleleft \tau \text{ is }
    \\ & \text{ a long string, on the principle that, in the presence of } \mathfrak{u}_n(\tau),  \\ & \text{ these terms are
    all equal when restricted to the intersection} \\ & \text{ of their domains}.
  \end{align*}
  Such cancellations will be referred to as cancellations of type (1) and type (2),
  respectively.

  These cancellations could conceivably alter the domain of $\mathcal{S}_n(\tau)$. We first show that they do not; we show in particular that when $\mathfrak{u}_n(\tau)$ holds, the domain of both $\mathcal{S}_n(\tau)$  and $\mathcal{C}_n(\tau)$ is $I(\tau)$.
  By Lemma \ref{expression_lemma_i}, every term in each of $\mathcal{S}_n(\tau)$
  and $\mathcal{C}_n(\tau)$ is an integer multiple of $e(\vec{\alpha})$,
  where $\vec{\alpha}$ is of the form $\vec{\alpha}(\sigma_0) ^\frown
  \langle \alpha_{\sigma_1}, \cdots, \alpha_{\sigma_k} \rangle$, where
  the sequence $\langle \sigma_0, \cdots, \sigma_k \rangle$ is as described therein.
  In particular, $\sigma_0 \subseteq \sigma_j \subseteq \tau$ for all
  $j \leq k$. The condition $\mathfrak{u}_n(\tau)$ then ensures that $g_\eta \leq
  g_{\alpha_{\sigma_j}}$ for all $\eta \in \sigma_0$ and $j \leq k$.
  It follows that $\dom(e(\vec{\alpha})) = I(\sigma_0)
  \supseteq I(\tau)$ and hence that the domains of
  $\mathcal{S}_n(\tau)$ and $\mathcal{C}_n(\tau)$ both contain
  $I(\tau)$. Since both expressions explicitly include
  the term $e(\tau)$, whose domain is $I(\tau)$, the desired equalities hold.

  Now observe that the expression $\mathcal{S}_n(\tau)$ can be rewritten as
  follows:
  \begin{align*}
  \mathcal{S}_n(\tau)
     = & \;\mathsf{d} e(\tau,\alpha_{\tau}) - \sum_{i<n+1}(-1)^i \mathsf{d} [\mathcal{A}_{n}(\tau^i)*\alpha_\tau ] \\
     = & \;\mathsf{d} e(\tau,\alpha_{\tau})-\sum_{i<n+1}(-1)^i \sum_{j<n+2} (-1)^j [\mathcal{A}_{n}(\tau^i)*\alpha_\tau]^j \\
     = & \; \mathsf{d} e(\tau,\alpha_{\tau})-\sum_{i<n+1}(-1)^i \sum_{j<n+1} (-1)^j [\mathcal{A}_{n}(\tau^i)*\alpha_\tau]^j\\
     & \; -\sum_{i<n+1}(-1)^{n+i+1} \mathcal{A}_{n}(\tau^i) \\
     = & \; \sum_{i<n+1}(-1)^i e(\tau^i,\alpha_{\tau})-\sum_{i<n+1}(-1)^i \sum_{j<n+1} (-1)^j [\mathcal{A}_{n}(\tau^i)*\alpha_\tau]^j\\
     & \; +(-1)^{n+1}e(\tau)-\sum_{i<n+1}(-1)^{n+i+1} \mathcal{A}_{n}(\tau^i) \\
  \tag{$\star$}   = & \; \sum_{i<n+1}(-1)^i e(\tau^i,\alpha_{\tau})-\sum_{i<n+1}(-1)^i \sum_{j<n+1} (-1)^j [\mathcal{A}_{n}(\tau^i)*\alpha_\tau]^j\\
     & \; +(-1)^{n+1}\mathcal{C}_n(\tau)
  \end{align*}
  If type (1) and type (2) cancellations reduce the expression on the starred line to zero, then we will have shown
  $\mathcal{S}_n(\tau)=(-1)^{n+1}\mathcal{C}_n(\tau)$ while maintaining our inductive hypothesis. This is our goal. We begin with the base case of $n=2$. This amounts simply to a more careful translation of the end of the proof of
  Theorem \ref{hechler_theorem_two} into our current terminology.

Since $\mathcal{A}_2(\tau^i) * \alpha_\tau
  = e(\tau^i, \alpha_{\tau^i}, \alpha_\tau)$ for any  fixed $i < 3$ and hence
  $[\mathcal{A}_2(\tau^i) * \alpha_\tau]^2 = e(\tau^i, \alpha_\tau)$, the starred line in this case rearranges to
  \begin{align*}
    \sum_{i < 3} (-1)^i e(\tau^i, \alpha_\tau) - \sum_{i < 3}
    (-1)^{i} e(\tau^i, \alpha_\tau) -& \sum_{i < 3} (-1)^i \sum_{j < 2}
    (-1)^j[\mathcal{A}_2(\tau^i) * \alpha_\tau]^j \\
    = & \sum_{i < 3} (-1)^{i+1} \sum_{j < 2} (-1)^j[\mathcal{A}_2(\tau^i) * \alpha_\tau]^j
  \end{align*}
 The only cancellation above is of type (1).

  Fix $i < 3$ and $j < 2$ and observe that $[\mathcal{A}_2(\tau^i)
  * \alpha_\tau]^j$ is of the form $e(\eta, \alpha_{\tau^i}, \alpha_\tau)$,
  with $\eta \in \tau^i$. But then $\vec{\sigma} = \langle \{\eta\},
  \tau^i, \tau \rangle$ is a long string for $\tau$ and $e(\eta, \alpha_{\tau^i},
  \alpha_\tau) = e(\vec{\alpha}[\vec{\sigma}])$. Hence by
  $\mathfrak{u}_2(\tau)$ there is a fixed $\epsilon$ such that the restriction of each such term to the common domain $I(\tau)$ is equal to $\epsilon$.
  The starred line thus reduces further to
  \[
    \sum_{i < 3} (-1)^{i+1} \sum_{j < 2} (-1)^j[\mathcal{A}_2(\tau^i) * \alpha_\tau]^j
    = \sum_{i < 3} (-1)^{i + 1} (\epsilon - \epsilon) = 0,
  \]
  as desired. The reduction is evidently by way of cancellations of type (2).
  This completes the base case.

  Now assume that $n > 2$ and that the inductive hypothesis holds for
  $n - 1$. We need two claims.

  \begin{claim}\label{AisS}
    For all $i < n+1$,
    \[
      \sum_{j<n+1} (-1)^j [\mathcal{A}_{n}(\tau^i)*\alpha_\tau]^j=(-1)^n
      \mathcal{S}_{n-1}(\tau^i)*\alpha_\tau.
    \]
  \end{claim}

  \begin{proof}
    Rewrite the sum $\sum_{j<n+1} (-1)^j [\mathcal{A}_{n}(\tau^i)*\alpha_\tau]^j$
    in the following sequence of steps; each simply consists of an application
    of the definitions of the expressions under consideration:
    \begin{equation*}
      \begin{split}
        & \,\sum_{j<n+1} (-1)^j [(-1)^n\mathcal{C}_{n-1}
        (\tau^i)*\alpha_{\tau^i}]^j*\alpha_\tau\\
        = & \,(-1)^n\sum_{j<n+1} (-1)^j\left[ [e(\tau^i) -
        \sum_{k<n}(-1)^{k} \mathcal{A}_{n-1}
        ((\tau^i)^k)]*\alpha_{\tau^i}\right]^j*\alpha_\tau \\
        = & \,(-1)^n\left[\mathsf{d} e(\tau^i,\alpha_{\tau^i})-
        \sum_{k<n}(-1)^k \mathsf{d} [\mathcal{A}_{n-1}((\tau^i)^k)*\alpha_{\tau^i}]
        \right]*\alpha_\tau \\
        = & \,(-1)^n\mathcal{S}_{n-1}(\tau^i)*\alpha_\tau
      \end{split}
    \end{equation*}
  \end{proof}

  \begin{claim}\label{SisC}
    For all $i < n + 1$, the condition $\mathfrak{u}_n(\tau)$ implies that
    \[
      (-1)^n\mathcal{S}_{n-1}(\tau^i)*\alpha_\tau =
      \mathcal{C}_{n-1}(\tau^i)*\alpha_\tau
    \]
    via cancellations of type (1) and of type (2).
  \end{claim}

  \begin{proof}
    By the inductive hypothesis applied to $\mathcal{S}_{n-1}(\tau^i)$, we know
    that the terms appearing in $\mathcal{S}_{n-1}(\tau^i)$ but not in
    $(-1)^n\mathcal{C}_{n-1}(\tau^i)$ pair off in pairs either of type (1), meaning that the pair consists of identical terms
    with opposite signs, or of type (2), meaning that the pair is of the form
    $\big(e(\vec{\alpha}[\vec{\sigma}_0]), -e(\vec{\alpha}[\vec{\sigma}_1])\big)$,
    where $\vec{\sigma}_0$ and $\vec{\sigma}_1$ are long strings for $\tau^i$.

    If such a pair $(t, -t)$ is of type (1), then $t * \alpha_\tau$ and
    $-t * \alpha_\tau$ also form a type (1) canceling pair in $\mathcal{S}_{n-1}(\tau^i) * \alpha_\tau$. Notice also that if $\vec{\sigma}$ is a long
    string for $\tau^i$ then $\vec{\sigma}^* = \vec{\sigma}^\frown \langle
    \tau \rangle$ is a long string for $\tau$ and $\vec{\alpha}[\vec{\sigma}^*]
    = \vec{\alpha}[\vec{\sigma}] ^\frown \langle \alpha_\tau \rangle$.
    Therefore, if $\big(e(\vec{\alpha}[\vec{\sigma}_0]),
    -e(\vec{\alpha}[\vec{\sigma}_1])\big)$ is a type (2) pair of terms from
    $\mathcal{S}_{n-1}(\tau^i)$ then
    $e(\vec{\alpha}[\vec{\sigma}_0]) * \alpha_\tau = e(\vec{\alpha}[\vec{\sigma}_0^*])$
    and $-e(\vec{\alpha}[\vec{\sigma}_1]) * \alpha_\tau = -e(\vec{\alpha}
    [\vec{\sigma}_1^*])$. The condition $\mathfrak{u}_n(\tau)$ then implies that these terms are
    equal to some $\epsilon$ and $-\epsilon$, respectively, and thus cancel in
    $\mathcal{S}_{n-1}(\tau^i) * \alpha_\tau$ in a type (2) manner.

    The terms of $\mathcal{S}_{n-1}
    (\tau^i) * \alpha_\tau$ that remain after these cancellations are precisely those of the form $t * \alpha_\tau$, where $t$
    appears in $(-1)^n\mathcal{C}_{n-1}(\tau^i)$. Hence
    $\mathcal{S}_{n-1}(\tau^i) * \alpha_\tau = (-1)^n \mathcal{C}_{n-1}(\tau^i)
    * \alpha_\tau$. Hence $(-1)^n \mathcal{S}_{n-1}(\tau^i) * \alpha_\tau
    = \mathcal{C}_{n-1}(\tau^i)$ via cancellations of type (1) and of type (2).
  \end{proof}

  By Claims \ref{AisS} and \ref{SisC} the starred line above now reduces to
  the following:
  \begin{equation*}
    \begin{split}
      &\,\sum_{i<n+1}(-1)^i e(\tau^i,\alpha_{\tau})-\sum_{i<n+1}(-1)^i
      [\mathcal{C}_{n-1}(\tau^i)*\alpha_\tau ] \\
      = &\,\sum_{i<n+1}(-1)^i e(\tau^i,\alpha_{\tau})-\sum_{i<n+1}(-1)^i
      \left[ e(\tau^i,\alpha_{\tau})-\sum_{j<n}(-1)^j\mathcal{A}_
      {n-1}((\tau^i)^j)*\alpha_\tau\right]\\
      = &\,\sum_{i<n+1}\sum_{j<n}(-1)^{i+j}\mathcal{A}_{n-1}((\tau^i)^j)*\alpha_\tau
    \end{split}
  \end{equation*}
  Observe that the only cancellation above is of type (1).
  Now the key observation is that to each $(\tau^i)^j$ with $i\leq j<n$ there
  corresponds a $(\tau^{i'})^{j'}$ with $j'<i'<n+1$, and vice versa. Namely, when
  $i'=j+1$ and $j'=i$ then $(\tau^i)^j=(\tau^{i'})^{j'}$. In the above sum, these each
  associate to the signs $(-1)^{i+j}$ and $(-1)^{i+j+1}$, respectively. We therefore
  conclude by  rewriting that sum as
  \[
    \sum_{i\leq j<n}(-1)^{i+j}\mathcal{A}_{n-1}((\tau^i)^j)*\alpha_\tau+
    \sum_{j<i<n+1}(-1)^{i+j}\mathcal{A}_{n-1}((\tau^i)^j)*\alpha_\tau=0,
  \]
  where again all cancellations are of type (1). This completes the induction step. It therefore completes the proof of Lemma \ref{s_c_lemma}.
\end{proof}

We are now ready to prove our main result, which we restate here for convenience.

\begin{THM}
Let $\kappa\in V$ be a weakly compact cardinal, and let $\mathbb{P}$ denote a length-$\kappa$ finite-support iteration of Hechler forcings. Then $$V^{\mathbb{P}}\vDash\textnormal{``}\,\mathrm{lim}^{n}\mathbf{A}=0\text{ for all }n>0.\textnormal{''}$$
\end{THM}

\begin{proof}
  We will show that any $n$-coherent family of functions in $V^{\mathbb{P}}$ is trivial.
  Since we have already done so for $n = 1$, we will assume in what follows that $n > 1$.
  For all $\alpha < \kappa$, let
  $\dot{g}_\alpha$ be a nice $\bb{P}$-name for the Hechler real added at the $\alpha^{\textnormal{th}}$ stage of $\mathbb{P}$. As in the $n=1$ and $n=2$ cases, it will suffice to consider $n$-coherent families in $V^{\bb{P}}$ indexed
  by $n$-tuples of functions from $\langle \dot{g}_\alpha \mid \alpha < \kappa \rangle$.
  Therefore fix a condition $p_0 \in \bb{P}$ and a family of $\bb{P}$-names
  $\dot{\Phi}_n = \left\langle \dot{\varphi}_{\vec{\alpha}}:\dot{I}(\vec{\alpha}) \rightarrow \bb{Z} ~ \middle| ~ \vec{\alpha}\in [\kappa]^n\right\rangle$ forced
  by $p_0$ to be an $n$-coherent family of functions. We will find a $q \leq p_0$ which forces
  $\dot{\Phi}_n$ to be trivial.

  Let $A_0 = \kappa \setminus (\max(\dom(p_0)) + 1)$. Much as before, for all $\vec{\alpha}\in[A_0]^{n+1}$
  there exists a condition $q_{\vec{\alpha}} \leq p_0$ such that
  $q_{\vec{\alpha}} \Vdash_{\bb{P}} ``\dot{g}_{\alpha_0} \leq \dots
  \leq \dot{g}_{\alpha_i} \leq \dots \leq \dot{g}_{\alpha_{n}}"$ and
  $q_{\vec{\alpha}}$ decides the value of
  $\dot{\mathtt{e}}(\vec{\alpha})$ to be  equal to some $\epsilon_{\vec{\alpha}} \in V$.
  Thin out $A_0$ to an unbounded $A_1$ such that
  $\epsilon_{\vec{\alpha}}$ equals some fixed $\epsilon$ for all
  $\vec{\alpha} \in [A_1]^{n+1}$.

  Now apply Lemma \ref{uniformizing_lemma} to $\left\langle q_{\vec{\alpha}} ~ \middle| ~ \vec{\alpha} \in [A_1]^{n+1} \right\rangle$ to find an unbounded $A \subseteq A_1$
  together with sets $\left\langle u_{\vec{\alpha}} ~ \middle| ~ \vec{\alpha} \in [A]^{\leq n}
  \right\rangle$, a natural number $\ell$, and stems $\langle s_i \mid i < \ell \rangle$
  as in the statement of the lemma.
  Next, define conditions $\left\langle q_{\vec{\alpha}} ~ \middle| ~ \vec{\alpha}
  \in [A]^{\leq n} \right\rangle$
  as follows: for each $\vec{\alpha}$ in $[A]^{\leq n}$ let $\vec{\beta}$ be
  an element of $[A]^{n+1}$ such that $\vec{\alpha} \sqsubseteq \vec{\beta}$
  and let $q_{\vec{\alpha}} = q_{\vec{\beta}} \restriction u_{\vec{\alpha}}$.
  As usual, by item (3) of Lemma \ref{uniformizing_lemma}, these definitions
  are independent of all of our choices of $(n+1)$-tuples $\vec{\beta}$.
  We will sometimes abuse notation and write, for example, $q_{\alpha}$ instead of
  $q_{\langle \alpha \rangle}$. 

  Let $q = q_\emptyset$, and note that $q \leq p_0$. We claim that $q$ forces $\dot{\Phi}$ to be trivial.
  This we argue by first partitioning $A$ into $n + 1$ disjoint and unbounded subsets $\langle \Gamma_i\mid 1\leq i\leq n+1\rangle$. Let $\dot{B}$ be a $\bb{P}$-name for the set of
  $\alpha \in \Gamma_1$ such that $q_{\alpha} \in \dot{G}$, where $\dot{G}$ is the canonical name for the $\mathbb{P}$-generic filter. By exactly the same reasoning as in the proofs of Theorem \ref{hechler_theorem_one} and Theorem \ref{hechler_theorem_two},
  \[
    q \Vdash_{\bb{P}}``\dot{B} \text{ is unbounded in } \kappa."
  \]

 \begin{claim}\label{strings}
   Fix $\tau\in [\kappa]^{n+1}$. The condition $q$ forces the following to hold
   in $V^{\mathbb{P}}:$

   Suppose that $m$ and $\left\langle\alpha_\sigma ~ \middle| ~ \sigma\in[\tau]^{<m}\textnormal{
   and }\sigma\neq\emptyset\right\rangle$ are such that
   \begin{itemize}
     \item $1< m\leq n+1$,
     \item $\alpha_{\{\eta\}} = \eta$ for all $\eta \in \tau$,
     \item $\alpha_\rho<\alpha_\sigma$ whenever $\rho$ is a proper subset of $\sigma$,
     \item $\alpha_\sigma\in \Gamma_{|\sigma|}$ for all nonempty
     $\sigma\in[\tau]^{<m}$, and
     \item for any $1\leq\ell< m$ and subset-initial segment $\vec{\sigma}
     \vartriangleleft \tau$ of length $\ell$, we have $q_{\vec{\alpha}[\vec{\sigma}]}
     \in \dot{G}$. In particular, $\eta \in \dot{B}$ for all $\eta \in \tau$.
   \end{itemize}
   Then there exists a sequence $\left\langle\alpha_\sigma ~ \middle| ~ \sigma\in[\tau]^m\right\rangle$ of elements of $\Gamma_m$ which
   together with $\left\langle\alpha_\sigma ~ \middle| ~ \sigma\in([\tau]^{<m}\backslash\{\emptyset\})\right\rangle$
   satisfies
   \begin{itemize}
     \item $\alpha_\rho<\alpha_\sigma$ whenever $\rho$ is a proper subset of $\sigma$, and
     \item for any subset-initial segment $\vec{\sigma} \vartriangleleft \tau$
     of length $m$, we have $q_{\vec{\alpha}[\vec{\sigma}]} \in \dot{G}$.
   \end{itemize}
 \end{claim}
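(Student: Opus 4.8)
To prove Claim~\ref{strings}, the plan is to replay the proof of Claim~\ref{amalg_claim_2}, but one level at a time along the full tree of subset-initial segments of $\tau$ rather than along a tree of height two. So I would fix $r \leq q$ together with $\mathbb{P}$-names for $m$ and for $\langle \alpha_\sigma \mid \sigma \in [\tau]^{<m},\ \sigma \neq \emptyset \rangle$. If $r$ does not force the premise of the claim then it forces the claimed implication vacuously, so I may assume, after passing to a stronger condition (still called $r$), that $r$ decides $m$, decides each $\alpha_\sigma$ to be a particular ordinal, and forces the premise. Since there are only finitely many subset-initial segments $\vec\sigma \vartriangleleft \tau$ of length $<m$ and $r$ forces $q_{\vec\alpha[\vec\sigma]} \in \dot G$ for each of them, I would further strengthen $r$ so that $r \leq q_{\vec\alpha[\vec\sigma]}$ for every such $\vec\sigma$; note that then $u_{\vec\alpha[\vec\sigma]} = \dom(q_{\vec\alpha[\vec\sigma]}) \subseteq \dom(r)$ for all such $\vec\sigma$. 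It then suffices to produce ordinals $\langle \alpha_\sigma \mid \sigma \in [\tau]^m \rangle$ in $\Gamma_m$, satisfying $\alpha_\rho < \alpha_\sigma$ whenever $\rho \subsetneq \sigma$, together with a single condition $s \leq r$ forcing $q_{\vec\alpha[\vec\sigma]} \in \dot G$ for every subset-initial segment $\vec\sigma \vartriangleleft \tau$ of length $m$.

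The first substantive step is the choice of the $\alpha_\sigma$. I would enumerate $[\tau]^m$ as $\{\sigma^1, \ldots, \sigma^N\}$ and pick $\alpha_{\sigma^1}, \ldots, \alpha_{\sigma^N} \in \Gamma_m$ one at a time. When choosing $\alpha_{\sigma^j}$, the relevant constraints come from the finitely many subset-initial segments $\vec\rho \vartriangleleft \tau$ of length $m-1$ whose last entry is contained in $\sigma^j$: for such a $\vec\rho$ and a candidate value $\gamma \in A$ above $\max(\vec\alpha[\vec\rho])$, appending $\sigma^j$ produces a subset-initial segment of length $m$ with ``tail'' $u_{\vec\alpha[\vec\rho]^\frown\langle\gamma\rangle} \setminus u_{\vec\alpha[\vec\rho]}$, and by item~(1c) of Lemma~\ref{strong_delta_system_lemma} these tails are pairwise disjoint as $\gamma$ varies over $A$. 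Since $\max(\dom(r))+1$ has cardinality less than $\kappa$, fewer than $\kappa$ of these tails meet it, so there are fewer than $\kappa$ ``bad'' values of $\gamma$; as $\Gamma_m$ is unbounded, I can therefore choose $\alpha_{\sigma^j} \in \Gamma_m$ avoiding all bad values and lying above $\max(\dom(r))$, above all previously chosen ordinals, and above every $\alpha_\eta$ with $\eta \subsetneq \sigma^j$, with the effect that every tail associated to $\alpha_{\sigma^j}$ lies entirely above $\max(\dom(r))$. (The requirement $\alpha_\rho < \alpha_\sigma$ for $\rho \subsetneq \sigma$ is then automatic: $|\rho| = |\sigma|$ is incompatible with $\rho \subsetneq \sigma$, so only the instances with $\rho$ old and $\sigma$ new matter, and those were arranged.)

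With all $\alpha_\sigma$ fixed, I would set $\mathcal{A} = \{ q_{\vec\alpha[\vec\sigma]} \mid \vec\sigma \vartriangleleft \tau,\ |\vec\sigma| = m \}$ and check that $\mathcal{A}$ and $r$ satisfy the hypotheses of Lemma~\ref{compatibility_lemma}. For the stem-matching hypothesis, the key observation is that for any two subset-initial segments $\vec\sigma, \vec\rho \vartriangleleft \tau$ of length $m$ the increasing tuples $\vec\alpha[\vec\sigma]$ and $\vec\alpha[\vec\rho]$ are \emph{aligned}: if $\alpha_{\sigma_i} = \alpha_{\rho_{i'}}$ then $i = i'$, because $\alpha_{\sigma_i} \in \Gamma_i$, $\alpha_{\rho_{i'}} \in \Gamma_{i'}$, and the $\Gamma_k$ are pairwise disjoint, and hence $|\vec\alpha[\vec\sigma] \cap \alpha_{\sigma_i}| = i - 1 = |\vec\alpha[\vec\rho] \cap \alpha_{\rho_{i'}}|$. (This is exactly what the partition of $A$ into the $n+1$ pieces $\Gamma_1, \ldots, \Gamma_{n+1}$ is for.) By item~(2b) of Lemma~\ref{strong_delta_system_lemma} the domains $u_{\vec\alpha[\vec\sigma]}$ and $u_{\vec\alpha[\vec\rho]}$ are then aligned, so by item~(1) of Lemma~\ref{uniformizing_lemma} the conditions $q_{\vec\alpha[\vec\sigma]}$ and $q_{\vec\alpha[\vec\rho]}$ carry identical stems wherever their domains intersect. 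For the remaining hypothesis, letting $\vec\rho$ be the length-$(m-1)$ initial segment of $\vec\sigma$, the $\Delta$-system and coherence properties recorded in Lemmas~\ref{strong_delta_system_lemma} and~\ref{uniformizing_lemma} give $q_{\vec\alpha[\vec\sigma]} \restriction u_{\vec\alpha[\vec\rho]} = q_{\vec\alpha[\vec\rho]}$, and since the tail $u_{\vec\alpha[\vec\sigma]} \setminus u_{\vec\alpha[\vec\rho]}$ lies above $\max(\dom(r))$ while $u_{\vec\alpha[\vec\rho]} \subseteq \dom(r)$, it follows that $q_{\vec\alpha[\vec\sigma]} \restriction (\max(\dom(r))+1) = q_{\vec\alpha[\vec\rho]} \geq r$. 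Lemma~\ref{compatibility_lemma} then yields a common lower bound $s$ of $\mathcal{A} \cup \{r\}$; this $s \leq r \leq q$ forces $q_{\vec\alpha[\vec\sigma]} \in \dot G$ for every $\vec\sigma \vartriangleleft \tau$ of length $m$ and forces $\langle \alpha_\sigma \mid \sigma \in [\tau]^m \rangle$ to witness the conclusion. As $r \leq q$ was arbitrary, $q$ forces the claim.

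The step I expect to be the main obstacle — or at least the one demanding the most care — is the alignment bookkeeping of the third paragraph: confirming that the $\Gamma_k$-partition genuinely forces the tuples $\vec\alpha[\vec\sigma]$ to be aligned, and hence (through Lemmas~\ref{strong_delta_system_lemma} and~\ref{uniformizing_lemma}) forces the conditions $q_{\vec\alpha[\vec\sigma]}$ to be mutually compatible with matching stems, together with the bookkeeping ensuring that each successive ``tail'' can be pushed above $\max(\dom(r))$ so that Lemma~\ref{compatibility_lemma} applies without modification. Everything else should be a routine, if notationally heavier, generalization of the $n = 2$ argument.
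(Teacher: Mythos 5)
Your proposal is correct and follows essentially the same route as the paper's own proof: fix $r \leq q$ deciding $m$ and the given $\alpha_\sigma$ and extending all relevant $q_{\vec{\alpha}[\vec{\sigma}]}$; for each $\sigma \in [\tau]^m$ use the $\Delta$-system property (item (1c) of Lemma \ref{strong_delta_system_lemma}) to choose $\alpha_\sigma \in \Gamma_m$ so that the new ``tails'' avoid a set of size $<\kappa$; verify via the $\Gamma_k$-partition that the resulting tuples $\vec{\alpha}[\vec{\sigma}]$ are pairwise aligned, so that by items (2b) and (1) of Lemmas \ref{strong_delta_system_lemma} and \ref{uniformizing_lemma} the conditions $q_{\vec{\alpha}[\vec{\sigma}]}$ have matching stems; and close by applying Lemma \ref{compatibility_lemma}. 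One small point worth noting: the paper chooses $\alpha_\sigma$ so that the tails are disjoint from $\dom(r)$, whereas you require them disjoint from $\max(\dom(r))+1$; your stronger requirement is what the hypothesis $r \leq p \restriction (\max(\dom(r))+1)$ of Lemma \ref{compatibility_lemma} literally calls for, so you have been slightly more careful on this point than the printed proof, though the fix is of course available to both.
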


 \begin{proof}
   Fix an $m$ and $\left\langle\alpha_\sigma ~ \middle| ~ \sigma\in[\tau]^{<m} \backslash \{\emptyset\}\right\rangle$
  and an $r\leq q$ such that $r$ forces $m$ and $\left\langle\alpha_\sigma ~ \middle| ~
  \sigma\in[\tau]^{<m} \backslash \{\emptyset\}\right\rangle$ to be as in the premise of
  the claim. In particular, $r$ forces that $q_{\vec{\alpha}[\vec{\sigma}]}$ is in
  $\dot{G}$ for every $\vec{\sigma}\vartriangleleft\tau$ as in the premise of the claim.
  As before, we may assume that $r$ extends all such $q_{\vec{\alpha}[\vec{\sigma}]}$.
  For each $\sigma\in[\tau]^m$ and $\vec{\sigma}\vartriangleleft\sigma$ of length $m-1$,
  observe that the sequence $\left\langle u_{\vec{\alpha}[\vec{\sigma}]^\frown\gamma}
  \backslash u_{\vec{\alpha}[\vec{\sigma}]}~ \middle| ~ \gamma\in\Gamma_m\backslash
  (\max(\vec{\alpha}[\vec{\sigma}])+1)\right\rangle$ consists of pairwise disjoint sets.
  Therefore, we can choose an $\alpha_\sigma\in\Gamma_m$
  so that $\left(u_{\vec{\alpha}[\vec{\sigma}]^\frown\alpha_\sigma} \backslash
  u_{\vec{\alpha} [\vec{\sigma}]}\right)\cap\text{dom}(r)=\emptyset$ for all
  $\vec{\sigma}\vartriangleleft \sigma$ of length $m-1$. This then defines a
  family of pairwise-aligned sequences $\langle\vec{\alpha}[\vec{\sigma}]\mid\vec{\sigma}
  \vartriangleleft\tau \textnormal{ is of length }m\rangle$, since for any
  $\alpha\in\Gamma_k$ in the intersection of two such $\vec{\alpha}[\vec{\sigma}]$
  and $\vec{\alpha}[\vec{\rho}]$,
  \[
    |\vec{\alpha}[\vec{\sigma}]\cap\alpha|=|\vec{\alpha}[\vec{\rho}]\cap\alpha|=k.
  \]
  As before, this implies that the family $\left\langle q_{\vec{\alpha}[\vec{\sigma}]} ~ \middle| ~
  \vec{\sigma}\vartriangleleft\tau\text{ is of length }m\right\rangle$ consists of
  pairwise compatible conditions, with identical stems wherever their domains intersect.
  Applying Lemma \ref{compatibility_lemma} to this collection together with
  $r$ then yields a lower bound $s\in\mathbb{P}$ for all of these conditions,
  thereby witnessing the conclusion of the claim.
 \end{proof}

 As in the proof of Theorem \ref{hechler_theorem_two}, we will conclude our argument
 in $V[G]$, where $G$ is a $\bb{P}$-generic filter containing $q$. Let $B$
 be the realization of $\dot{B}$ and $\Phi_n$ the realization of
 $\dot{\Phi}_n$ in $V[G]$. For each nonempty $\sigma \in
 [B]^{\leq n+1}$, we will specify an ordinal $\alpha_\sigma < \kappa$ in such a way
 that the collection $\left\langle\alpha_\sigma ~ \middle| ~ \sigma \in [B]^{\leq n+1}\right\rangle$ satisfies
 the following:
 \begin{itemize}
   \item $\alpha_{\{\eta\}} = \eta$ for all $\eta \in B$,
   \item $\alpha_\rho < \alpha_\sigma$ whenever $\rho$ is a proper subset of
   $\sigma$,
   \item $\alpha_\sigma \in \Gamma_{|\sigma|}$ for all nonempty $\sigma \in
   [B]^{\leq n + 1}$, and
   \item for every $\tau \in [B]^{n+1}$ and every subset-initial segment
   $\vec{\sigma} \vartriangleleft \tau$, we have $q_{\vec{\alpha}[\vec{\sigma}]}
   \in G$.
 \end{itemize}

 The construction of $\left\langle\alpha_\sigma ~ \middle| ~ \sigma \in [B]^{\leq n+1}\right\rangle$
 proceeds via a straightforward recursion on $|\sigma|$, invoking
 Claim \ref{strings}.

 The $n$-coherent family $\Phi_n$ and collection $\left\langle\alpha_\sigma
 ~ \middle| ~ \sigma \in [B]^{\leq n + 1} \backslash \{\emptyset\}\right\rangle$ determine
 expressions $\mathcal{A}_n(\rho)$ for all $\rho \in [B]^n$ and
 expressions $\mathcal{S}_n(\tau)$ and $\mathcal{C}_n(\tau)$ and statements
 $\mathfrak{u}_n(\tau)$ for all $\tau \in [B]^{n+1}$. Note that our choices of
  $q$, $B$, and the sequence $\left\langle\alpha_\sigma ~ \middle| ~ \sigma \in [B]^{\leq n + 1}\right\rangle$
 ensure that $\mathfrak{u}_n(\tau)$ holds for all $\tau \in [B]^{n+1}$.

 For each $\rho \in [B]^n$, let $\psi_\rho = \mathcal{A}_n(\rho)$.
 It follows from the definition of $\mathcal{A}_n(\rho)$ and the fact that $\mathfrak{u}_n(\tau)$
 holds for all $\tau \in [B]^{n+1}$ that $\mathcal{A}_n(\rho)$
 is a finitely-supported function with domain
 $I(\rho)$. To
 show that $\Phi_n$ is trivial, it will suffice to show that
 \begin{align}\label{last}
  e(\tau) = \sum_{i < n+1} (-1)^i \psi_{\tau^i}
 \end{align}
 for all
 $\tau \in [B]^{n+1}$. Fix such a $\tau$. The above equation is easily seen to be equivalent to the assertion that $\mathcal{C}_n(\tau) = 0$.
 By Lemma \ref{s_c_lemma} and the fact that $\mathfrak{u}_n(\tau)$ holds, $\mathcal{S}_n(\tau) = (-1)^{n+1}\mathcal{C}_n(\tau)$.
 By Lemma \ref{expression_lemma_ii}, $\mathcal{S}_n(\tau) = 0$. In consequence, $\mathcal{C}_n(\tau) = 0$. This shows equation (\ref{last}) for an arbitrary $\tau$ hence $\Phi_n$ is trivial.
\end{proof}

\section{Conclusion}\label{conclusion}

Several natural questions follow immediately from the above results. The first of these is whether the assumption of a weakly compact cardinal is necessary for
the conclusion of the Main Theorem. As noted in Remark~\ref{recent_work_remark} above,
this was answered in the negative by the authors together with Michael Hru\v{s}\'{a}k in \cite{svhdlwolc},
where they showed that $\lim^n \mathbf{A} = 0$ for all $n > 0$ after the addition of $\beth_\omega$-many Cohen reals.
Closely related is the question of whether a large continuum is necessary for the conclusion of the Main Theorem.
\begin{question} \label{continuum_q}
What is the minimum value of the continuum compatible with the statement $``\lim^n\mathbf{A}=0 \text{ for all }n>0"$?
\end{question}
By the results of \cite{mp}, that minimum value is at least $\aleph_2$. By the
aforementioned result in \cite{svhdlwolc}, $\aleph_{\omega+1}$ is an upper bound
for the answer to Question~\ref{continuum_q}.

Our Main Theorem also lends the original, motivating questions a certain renewed charge:
\begin{question}\label{motivating} Is it consistent with the $\mathsf{ZFC}$ axioms (possibly modulo large cardinal assumptions) that strong homology is additive
\begin{itemize}
\item on Polish spaces?
\item on locally compact metric spaces?
\item on metric spaces?
\end{itemize}
\end{question}
The extension $V^{\mathbb{P}}$ of the Main Theorem is, of course, a candidate model for an affirmative answer to any of these questions, and indeed, as noted in Remark~\ref{recent_work_remark},
Bannister, Bergfalk, and Moore recently showed in \cite{bannister_bergfalk_moore} that strong homology is additive
on the class of locally compact Polish spaces in $V^{\bb{P}}$. A word is in order here about the machinery of strong homology, which consists first in an assignment of a system of approximations to a given topological space $X$ and second in an assignment of a homology group to $X$ by way of the system of homology groups of its approximations \cite{SSH}. Compact metric spaces figure within this framework as particularly tractable: they admit sequential systems of approximations. Indeed, the index-set $^\omega\omega$ so central to all our considerations above arose as a countable product (induced by a countable topological sum) of just such a family of height-$\omega$ systems of approximations to $n$-dimensional Hawaiian earrings; the argument of \cite{bannister_bergfalk_moore} consists largely in correlating a broader class of ${^\omega}\omega$-indexed inverse systems to strong homology computations for locally compact Polish spaces, and in then showing that the present work's arguments apply to that broader class. In short, strong homology computations translate the ways that each of the bulleted classes above relate to ``simpler'' spaces to an associated family of index-sets; Question \ref{motivating} is perhaps largely one of what sorts of combinatorics these index-sets may or may not simultaneously support. Accordingly, the combinatorics of the interrelations among various partial orders may well play some role in its further solution (see, e.g., \cite{dircof}). Through these interrelations, the vanishing of one system's higher derived limits may entail the vanishing of others'; Theorem 5.1 of \cite{b} is an example of such a result.

At some broader level, Question \ref{motivating} is asking what sorts of continuities we may compatibly expect of homology functors on categories of topological spaces properly extending $\mathsf{HCW}$, i.e., properly including the category of spaces homotopy equivalent to a CW-complex. For the question \emph{on what class of spaces may a homology theory be both strong shape invariant and additive?} \hspace{-.07 cm}is at heart a question about the interactions of the inverse and direct limits associated to the first prospective property and the second, respectively. As mentioned, closely related is the question of whether the strong homology groups of a space are the direct limits of the strong homology groups of its compact subspaces; a highly canonical extension of Steenrod homology exists on any class for which the answer is yes. This is the case in \cite{bannister_bergfalk_moore}; see its introduction and \cite{mp} and \cite[Chapter 21.5]{SSH} for further discussion.

Lastly, one of this paper's referees asked what happens if we replace our iteration $\bb{P}$ of
Hechler forcing with the standard length-$\kappa$ finite support iteration
$\bb{Q}$ for forcing Martin's Axiom ($\mathsf{MA}$) together with $2^{\aleph_0} = \kappa$.
We conjecture that, for all $n > 0$, $\lim^n \mathbf{A} = 0$ will hold in $V^{\bb{Q}}$ as well, but our argument does not
directly generalize to prove this, since it makes essential use of the fact
that Hechler forcing is $\sigma$-centered (in particular, any finitely many
conditions with the same stem have a common lower bound). This is evident in
Lemmas \ref{compatibility_lemma} and \ref{uniformizing_lemma}(1), which are
then used in the proof of Claim~\ref{strings}. There are straightforward
generalizations of Lemmas \ref{compatibility_lemma} and \ref{uniformizing_lemma}
to arbitrary length-$\kappa$ finite support iterations of $\sigma$-centered
forcings of size ${<}\kappa$. (In this more general context, if
$\bb{S}$ is a $\sigma$-centered poset and $p \in \bb{S}$, then $s_p$,
the \emph{stem} of $p$, is interpreted as the unique $j < \omega$ such that
$p \in S_j$, where $\langle S_j \mid j < \omega \rangle$ is a fixed partition
of the underlying set of $\bb{S}$ into centered subsets.) With this in mind,
our proof adapts to show that, if $\kappa$ is weakly compact and $\bb{Q}^*$ is the standard
length-$\kappa$ finite support iteration for forcing $\mathsf{MA(\sigma\text{-centered})} +
2^{\aleph_0} = \kappa$, then $\lim^n \mathbf{A} = 0$ for all $n > 0$.
The proof proceeds by considering $n$-coherent families indexed by $n$-tuples of
the generic reals $\langle \dot{g}_\alpha \mid \alpha \in B \rangle$,
where $B \in V$ is an unbounded subset of $\kappa$ such that, for each
$\alpha \in B$, the $\alpha^{\mathrm{th}}$ iterand of $\bb{Q}^*$ is a
$\bb{Q}^*_\alpha$-name for Hechler forcing and $\dot{g}_\alpha$ is a name
for the corresponding Hechler real. We therefore close with the question of the full form of Martin's Axiom, and with thanks to the referee for asking it:

\begin{question}
  Suppose that $\kappa$ is a weakly compact cardinal and that $\bb{Q}$ is the standard
  length-$\kappa$ finite support iteration for forcing $\mathsf{MA} + 2^{\aleph_0} = \kappa$.
  Is it the case that, in $V^{\bb{Q}}$, $\lim^n \mathbf{A} = 0$ for all $n > 0$?
\end{question}

\textbf{Acknowledgements.} This work began with a visit by Chris Lambie-Hanson to UNAM Morelia. The authors would like to thank both the institutions VCU and UNAM for their support for this visit. More particularly, many of this paper's most fundamental impulses are due to Michael Hru\v{s}\'{a}k and grew out of conversations with him both before and during Chris's visit; the authors would like to thank him very especially. Jeffrey Bergfalk would like to thank Justin Moore and Jim West as well for much patient and formative instruction in the subject-matter of this paper. The authors would also like to thank Stevo Todorcevic for encouraging them to reduce their large-cardinal assumption from a measurable to a weakly compact, and for his several suggestions for how to do so. Finally, they would like to thank both referees for their extremely thorough and thoughtful readings, comments, and questions.

\bibliographystyle{amsplain}
\bibliography{bibarx3}

\end{document}